\documentclass[11pt,oneside,reqno]{amsart}
\usepackage{geometry}
\newcommand{\bfG}{\mathbf{G}}

\newcommand{\bfGSp}{\mathbf{GSp}}

\newcommand{\Art}{\mathrm{Art}}
\newcommand{\Gal}{\mathrm{Gal}}
\newcommand{\GNrm}{\mathrm{GNrm}}

\newcommand{\etrm}{\mathrm{\acute{e}t}}

\newcommand{\degrm}{\mathrm{deg}}

\newcommand{\Trrm}{\mathrm{Tr}}

\newcommand{\pr}{\mathrm{pr}}

\newcommand{\ab}{\mathrm{ab}}
\newcommand{\GL}{\mathrm{GL}}

\newcommand{\Res}{\mathrm{Res}}

\newcommand{\ad}{\mathrm{ad}}

\newcommand{\proj}{\mathrm{pr}}

\newcommand{\ssrm}{\mathrm{ss}}
\newcommand{\Aut}{\mathrm{Aut}}
\newcommand{\End}{\mathrm{End}}

\newcommand{\crys}{\mathrm{crys}}

\newcommand{\Nrm}{\mathrm{Nrm}}
\newcommand{\CM}{\mathrm{CM}}

\newcommand{\A}{\mathbb{A}}

\newcommand{\C}{\mathbb{C}}

\newcommand{\F}{\mathbb{F}}

\newcommand{\Q}{\mathbb{Q}}
\newcommand{\R}{\mathbb{R}}

\newcommand{\Z}{\mathbb{Z}}

\newcommand{\GSp}{\mathrm{GSp}}

\newcommand{\Acal}{\mathcal{A}}

\newcommand{\Ccal}{\mathcal{C}}

\newcommand{\Gcal}{\mathcal{G}}
\newcommand{\Hcal}{\mathcal{H}}

\newcommand{\Ncal}{\mathcal{N}}
\newcommand{\Ocal}{\mathcal{O}}

\newcommand{\Scal}{\mathcal{S}}
\newcommand{\Tcal}{\mathcal{T}}

\newcommand{\Gbf}{\mathbf{G}}

\newcommand{\Pbf}{\mathbf{P}}

\newcommand{\Rbf}{\mathbf{R}}

\newcommand{\Ubf}{\mathbf{U}}

\newcommand{\Xbf}{\mathbf{X}}

\newcommand{\hfrak}{\mathfrak{h}}

\newcommand{\Cfrak}{\mathfrak{C}}
\newcommand{\Dfrak}{\mathfrak{D}}
\newcommand{\Gfrak}{\mathfrak{G}}
\newcommand{\Hfrak}{\mathfrak{H}}

\newcommand{\Nfrak}{\mathfrak{N}}

\newcommand{\bcS}{\mathcal{S}}

\newcommand{\bfX}{\mathbf{X}}

\usepackage{imakeidx}
\makeindex[columns=3]
\makeindex[columns=2,options= -s index_style_xyz.ist]
\usepackage[utf8]{inputenc}
\usepackage{tgtermes}
\usepackage[english]{babel}
\usepackage{amsmath,
	mathtools,
	amssymb,
	graphicx,
	bbm,
	xcolor,
	amsthm,
	mathrsfs,
	imakeidx,
	mathdots}
\usepackage[all]{xy}
\usepackage[backref=page]{hyperref}
\hypersetup{}
\usepackage{hyperref}
\usepackage{tikz-cd}
\usetikzlibrary{decorations.pathmorphing}
\usepackage{dynkin-diagrams}
\usepackage{listings}
\usepackage{bm}
\usepackage[normalem]{ulem}
\usepackage{yhmath}
\makeatletter
\@namedef{subjclassname@2020}{%
	\textup{2020} Mathematics Subject Classification}
\makeatother

\theoremstyle{plain}

\newtheorem{theorem}{Theorem}[section]
\newtheorem{corollary}[theorem]{Corollary}
\newtheorem{definition}[theorem]{Definition}
\newtheorem{lemma}[theorem]{Lemma}
\newtheorem{proposition}[theorem]{Proposition}

\newtheorem{remark}[theorem]{Remark}

\newtheorem*{theorem-no-num}{Theorem}
\newtheorem*{proposition-no-num}{Proposition}
\newtheorem*{corollary-no-num}{Corollary}

\newtheorem{taggedtheoremx}{}
\newenvironment{taggedtheorem}[1]
{\renewcommand\thetaggedtheoremx{#1}\taggedtheoremx}
{\endtaggedtheoremx}

\usepackage[shortlabels]{enumitem}

\begin{document}
	\title{Strengthenings of Mazur's Conjecture for Higher Heegner Points}
	\author{Xiaoyu Zhang}
	\address{Universität Duisburg-Essen,
		Fakultät für Mathematik,
		Mathematikcarrée
		Thea-Leymann-Straße 9,
		45127 Essen,
		Germany}
	\email{xiaoyu.zhang@uni-due.de}
	\subjclass[2020]{11G15, 14K22, 22D40}
	\keywords{Abelian varieties, Hecke orbits, Galois orbits, Mazur's conjecture}
	
	\begin{abstract}
    We establish quantitative strengthenings of Mazur's conjecture regarding the non-torsion property of higher Heegner points on modular and Shimura curves, confirming both a vertical version for sufficiently large powers $n$ and a horizontal version for primes $p \gg 1$. Distinct from previous strategies by Cornut and Vatsal that relied on Ratner's theorems on unipotent flows and required restrictive hypotheses on level structures, our approach circumvents these constraints by exploiting the interplay between Galois orbits and Hecke orbits. By quantifying the relative size of these orbits, we reduce the problem to ergodic results concerning the equidistribution of Hecke operators and the classification of joining measures. This method allows for the analysis of simultaneous supersingular reductions without requiring the surjectivity of reduction maps.
	\end{abstract}
	
	\maketitle

	\tableofcontents

	\section{Introduction}	
	Heegner points on modular and Shimura curves encode deep arithmetic information, playing a pivotal role in Iwasawa theory, the Birch--Swinnerton-Dyer conjecture, and the Gross--Zagier formula.

	Let $L$ be an imaginary quadratic field of discriminant $D_{L/\Q}$, and let $N$ be a positive integer such that every prime factor of $N$ splits in $L$. There exists an ideal $\Ncal\subset\Ocal_L$ satisfying $\Ocal_L/\Ncal\simeq\Z/N\Z$. The modular curve $X_0(N)$ parameterizes isogenies of elliptic curves $E_1\to E_2$ with a cyclic kernel of order $N$.
	
	For each positive integer $n$, let $\Ocal_n:=\Z+n\Ocal_L$ denote the order of conductor $n$ in $L$. Set $\Ncal_n:=\Ocal_n\cap\Ncal$ and define the Heegner point on $X_0(N)$ of conductor $n$ by
	\[
	x[n]:=
	[\C/\Ocal_n\to\C/\Ncal_n^{-1}]\in X_0(N).
	\]
	It is well known that $x[n]$ is defined over the ring class field $L[n]$ of $L$ of conductor $n$. Let $J_0(N)$ be the Jacobian of $X_0(N)$. For any abelian variety $A/\Q$ arising as a quotient of $J_0(N)$, we denote the induced morphism by
	\[
	\pi\colon X_0(N)\to J_0(N)\to A.
	\]
	For each $n$, we set $y[n]:=\pi(x[n])$.
	
	Fix a prime number $p$ and let $L[p^\infty]=\bigcup_nL[p^n]$. Let $H[p^\infty]$ denote the maximal anticyclotomic $\Z_p$-extension of $L$; note that $L[p^\infty]/H[p^\infty]$ is a finite extension.  Mazur conjectured (\cite{Mazur1983}) a non-torsion property for these higher Heegner points, a result subsequently established by Cornut (\cite{Cornut2002}):
	\begin{theorem}
		There exists an integer $n\ge0$ such that
		\[
		\Trrm_{L[p^\infty]/H[p^\infty]}(y[p^n])\notin
		A(H[p^\infty])_\mathrm{tors}.
		\]
	\end{theorem}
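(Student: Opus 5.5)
We argue by contradiction. Suppose that for every $n\ge0$ the trace $z_n:=\Trrm_{L[p^\infty]/H[p^\infty]}(y[p^n])$ is torsion in $A(H[p^\infty])$. The first step is to bound the torsion uniformly. Since $H[p^\infty]/L$ is a $\Z_p$-extension, unramified outside $p$, the group $A(H[p^\infty])_\mathrm{tors}$ is finite. One way to see this is to reduce modulo two inert primes $\ell\nmid pN$ of good reduction: these split completely in the unramified-at-$\ell$ extension $H[p^\infty]$ only up to a finite residue extension, and injectivity of prime-to-$\ell$ torsion under reduction then gives the bound. So there is a fixed $M$ with $M z_n=0$ for all $n$.

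Next we pass to reduction modulo a well-chosen prime. Fix a prime $\ell\nmid pND_{L/\Q}$ that is inert in $L$. A prime $\lambda$ of $L[p^\infty]$ above $\ell$ has finite decomposition group in $\Gal(L[p^\infty]/L)$, and the Heegner points $x[p^n]$ reduce to supersingular points of $X_0(N)_{\F_{\ell^2}}$. The reduction of $x[p^n]$ and its Galois conjugates can be described adelically via the Deuring--Serre correspondence. The supersingular locus is identified with a finite double coset $B^\times\backslash \widehat B^\times/\widehat R^\times$, where $B$ is the definite quaternion algebra ramified at $\{\ell,\infty\}$ and $R$ is an Eichler order of level $N$. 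The Galois orbit of $x[p^n]$ under $\Gal(L[p^\infty]/H[p^\infty])$ then maps to the orbit of an explicit adelic point under a finite subgroup of $\widehat L^\times$. Let $\mathrm{red}$ denote the reduction map. Reducing the relation $M z_n=0$, and using that $\pi$ factors through the Albanese map, gives a relation in $A(\overline{\F}_\ell)$ that can be rewritten as a divisor relation: the degree-zero divisor $M\sum_{\sigma}\mathrm{red}(x[p^n]^\sigma)-M\deg\cdot(\text{fixed base point})$ lies in the kernel of the map from $\Z[\text{supersingular points}]^0$ to $A(\overline{\F}_\ell)$.

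The main step is a simultaneous equidistribution statement. As $n\to\infty$, the tuples of reductions
\[
\bigl(\mathrm{red}(x[p^n]^{\sigma\tau})\bigr)_{\tau\in \Gal(L[p^\infty]/H[p^\infty])}
\]
become surjective onto the appropriate product of copies of the supersingular set, for $\sigma$ ranging over $\Gal(L[p^\infty]/L)$. This is Cornut's use of Ratner's theorem for $p$-adic unipotent flows on $\prod \bigl(B^\times\backslash \widehat B^\times\bigr)$, or alternatively Vatsal's argument. The hypothesis we need is that the finitely many translations $\tau$ give points that are pairwise non-commensurable in the $p$-adic quaternion group; this holds because the $\tau$ correspond to distinct classes of $L^\times$ modulo $\Q^\times$ outside the finite torsion part. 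This step is the main obstacle. We must handle the torsion part of $\Gal(L[p^\infty]/L)$ by decomposing along its characters, and we must verify the independence needed to apply the joining or Ratner classification, which is the genuinely deep input.

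Finally we derive the contradiction. Surjectivity implies that for $n$ large the map $\sigma\mapsto \sum_\tau \mathrm{red}(x[p^n]^{\sigma\tau})$ hits every formal sum $\sum_\tau s_\tau$ with arbitrary supersingular $s_\tau$. In particular it hits two sums that differ only in one slot, $s$ versus $s'$. Subtracting the corresponding relations shows that $M(\pi(s)-\pi(s'))=0$ in $A(\overline{\F}_\ell)$ for all supersingular $s,s'$. But by Ribet's level-lowering/Jacquet--Langlands, the differences of supersingular points generate $J_0(N)(\overline{\F}_\ell)$ modulo $\ell$-new contributions on the component group. More precisely, the image of $\Z[\text{supersingular points}]^0$ in $A$ is not contained in $A[M]$. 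This follows by choosing $\ell$ so that $A$ has a non-Eisenstein quotient and applying Ihara's lemma, so that the Hecke module of degree-zero supersingular divisors surjects onto a group of unbounded order in the relevant component group. This is the desired contradiction, and it proves the theorem.
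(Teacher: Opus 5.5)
You follow Cornut's Ratner-theoretic route rather than the paper's. The paper places the Galois orbit $\Gfrak_n(x[n])$ inside the Hecke orbit $T_{a(n)}(x[1])$ and feeds this into Clozel--Oh--Ullmo and Einsiedler--Lindenstrauss. Your version, however, has a genuine gap at its decisive step, which rests on a false claim.

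You assert that the translations by $\tau\in\Gal(L[p^\infty]/H[p^\infty])$ are pairwise non-commensurable, so that $(\mathrm{red}(x^{\tau}))_\tau$ fills the full product of supersingular sets. Write $X^{\ssrm}$ for the supersingular set. Every such $\tau$ is torsion, so your justification via classes ``outside the finite torsion part'' does not apply, and the claim is false in general.

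Write $\tau=\Art(\lambda_\tau)$. The lattices in $B_p^\times$ governing the slots $\tau,\tau'$ are commensurable whenever the $p$-component of $\lambda_\tau\lambda_{\tau'}^{-1}$ lies in $\Q_p^\times L^\times$. This is because elements of $B^\times$ commensurate $\Gamma=B^\times\cap(\widehat R^{(p)})^\times$.

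This happens inside the torsion subgroup. Take $L=\Q(\sqrt{-q})$ with $q>3$ prime and $p\neq q$. The Frobenius $\tau$ at $\mathfrak q=(\sqrt{-q})$ is a nontrivial element of order $2$ of $\Gal(L[p^\infty]/H[p^\infty])$; it is represented by $\sqrt{-q}$ placed at $p$. Adelic representatives $\lambda g$ of points in the orbit have trivial $q$-component apart from $\lambda_q\in L_q^\times$, which commutes with $\varpi_{\mathfrak q}$. Hence
\[
\mathrm{red}(x^\tau)=B^\times \lambda g\varpi_{\mathfrak q}\widehat R^\times\in T_q(\mathrm{red}(x))
\]
for every $x$ in the orbit. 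So the pair $(\mathrm{red}(x),\mathrm{red}(x^\tau))$ takes at most $(q+1)\cdot\#X^{\ssrm}$ values. Product surjectivity therefore fails once $\#X^{\ssrm}>q+1$, and you cannot produce two conjugates whose reduced traces differ in a single arbitrary slot, $s$ versus $s'$.

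What is true is surjectivity onto a product, over commensurability classes, of \emph{twisted diagonals}. The paper's Theorem~\ref{Galois orbit equidistributes} records exactly this, through the partition $\Tcal=\bigsqcup_i\Tcal_i$ and the maps $\widetilde\Delta^{\Tcal_i}$. Varying one class then moves all slots of that class at once, by fixed Hecke-type correspondences.

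You therefore need a further non-degeneracy argument: for suitable points, the resulting sum over the class must not be $M$-torsion. This could come, for example, from a bound on the fibres of the class-sum map, or from an Ihara-type lemma. That is where a Ribet/Ihara input would have to be aimed.

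The use you actually make of it in your last paragraph is unnecessary. That $\pi(s)-\pi(s')\in A[M]$ for all supersingular $s,s'$ is impossible already follows from three facts, as in the proof of Corollary~\ref{generalized Mazur's conjecture}:
\begin{itemize}
\item $\#X^{\ssrm}\to\infty$ as $\ell\to\infty$;
\item the fibres of $\pi$ are bounded;
\item $\#A(\overline{\F}_\ell)[M]\le M^{2\dim A}$.
\end{itemize}
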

	
	While Cornut's theorem proves the existence of non-torsion points, it leaves open the quantitative question of their distribution: for how many integers $n$ is the trace $\Trrm_{L[p^\infty]/H[p^\infty]}(y[p^n])$ non-torsion? In particular, are there infinitely many such $n$? In this article, we prove the following strengthening of Mazur's conjecture under mild hypotheses on $p$:
	
	\begin{theorem}[Corollary \ref{vertical Mazur's conjecture, stronger version}]\label{Theorem 1.2}
		Suppose that $p$ is coprime to $ND_{L/\Q}$.
		For $n\gg1$, we have
		\[
		\Trrm_{L[p^\infty]/H[p^\infty]}(y[p^n])\notin A(H[p^\infty])_\mathrm{tors}.
		\]
	\end{theorem}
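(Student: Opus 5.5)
We follow the strategy of Cornut--Vatsal but replace Ratner-type arguments by a comparison of Galois orbits with Hecke orbits at a suitable auxiliary prime. Suppose, for contradiction, that there are infinitely many $n$ with $z_n:=\Trrm_{L[p^\infty]/H[p^\infty]}(y[p^n])$ torsion. The torsion subgroup $A(H[p^\infty])_\mathrm{tors}$ is finite: fix a prime $\ell\nmid pN$ inert in $L$ and use reduction at a place above $\ell$, which splits completely in $H[p^\infty]/L$ because its decomposition group is torsion. So there is an integer $M$ such that $M z_n=0$ for all such $n$. We then pick an auxiliary prime $\ell\nmid pND_{L/\Q}$ inert in $L$. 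At a place $\lambda$ of $L[p^\infty]$ above $\ell$, every Heegner point $x[p^n]$ has supersingular reduction. The reduction map identifies the supersingular locus of $X_0(N)_{\overline{\F}_\ell}$ with a finite double coset space $\Xbf=B^\times\backslash\widehat B^\times/\widehat R^\times$, where $B$ is the definite quaternion algebra ramified at $\ell\infty$. Under this identification, $\Gal(L[p^\infty]/L)$ acts on reductions of Heegner points through $\widehat L^\times$ via Artin reciprocity (Shimura reciprocity).

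The first key step is a Galois-orbit computation. Let $G_n=\Gal(L[p^n]/L)$, and let $\Hcal_n\subset G_n$ be the image of $\Gal(L[p^\infty]/H[p^\infty])$; this group is finite and of bounded order. The relation $Mz_n=0$ becomes a linear relation among the points $\sigma x[p^n]$ for $\sigma\in\Hcal_n$. We then apply the full Galois group: for each $\tau\in G_n$, we get $M\sum_{\sigma\in\Hcal_n}\pi(\tau\sigma x[p^n])=0$. We reduce this modulo $\lambda$ and push it to the character group $\Z[\Xbf]^0$ of the toric part of the Néron model of $A$ at $\ell$, or equivalently to the group of divisors modulo $\ell$-adic components via Ribet's exact sequence. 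The result is that, for every $\tau$, the divisor $\sum_{\sigma\in\Hcal_n}\mathrm{red}(\tau\sigma x[p^n])$ lies in the kernel of a fixed nonzero Hecke-equivariant map $\phi\colon\Z[\Xbf]\to\Phi$, which has a nontrivial component on the non-Eisenstein part attached to $A$.

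The second step, which is the heart of the argument, is equidistribution. As $n\to\infty$, the $G_n$-orbit of $x[p^n]$, viewed in $\Xbf$, is a $p$-adic Hecke orbit: the points $x[p^n]$ for varying $n$ correspond to a path in the Bruhat--Tits tree of $\PGL_2(\Q_p)$, and the set $\{\tau x[p^n]\}$ is the image of an $\widehat{\Ocal}_{p^n}^\times$-orbit. The needed statement is that the map $\tau\mapsto(\mathrm{red}(\tau\sigma x[p^n]))_{\sigma\in\Hcal_n}\in\Xbf^{\Hcal_n}$ is surjective onto the relevant product of connected components for $n\gg1$. Equivalently, the joint distribution of the finitely many translates $\sigma x[p^n]$ becomes that of independent uniform points. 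Since $p\nmid ND_{L/\Q}$, the elements of $\Hcal_n$ are represented by ideles that are nontrivial at primes away from $p$, or by elements generating non-commensurable tori. Joinings of the $p$-adic Hecke action on the product of copies of $\Xbf$ are then classified: any limit measure invariant under the diagonal unipotent/Hecke action at $p$ is a product of Haar measures, unless two coordinates are related by an element commensurating the $S$-arithmetic group, and this is excluded by the coprimality hypothesis. Quantitatively, we compare the size $|G_n|\sim p^n$ of the Galois orbit with the size of the Hecke orbit, which grows like $p^n$ in $\Xbf^{\Hcal_n}$ modulo a bounded defect. Combining this with the spectral gap for Hecke operators on $\Z[\Xbf]^0$, we get the surjectivity for all large $n$, not just along a subsequence. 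I expect this uniform-in-$n$ surjectivity to be the main obstacle, since Cornut's argument yields it only for some $n$.

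Finally, the surjectivity implies that $\phi\bigl(\sum_{\sigma}[s_\sigma]\bigr)=0$ for all tuples $(s_\sigma)$ of supersingular points in the appropriate components. Varying one coordinate at a time, this forces $\phi([s])-\phi([s'])=0$ for all $s,s'$, so $\phi$ factors through the degree map, that is, through the Eisenstein quotient. This contradicts the fact that $\phi$ is nonzero on the cuspidal part attached to $A$. Hence $z_n$ is non-torsion for all $n\gg1$.
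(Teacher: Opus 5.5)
Your proposal has a genuine gap at the step you identify as the heart of the argument. Comparing the sizes of Galois and Hecke orbits does not give surjectivity of $\tau\mapsto(\mathrm{red}(\tau\sigma x[p^n]))_\sigma$ when $p$ splits in $L$.

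For $p$ inert, the $\Gal(L[p^n]/L[1])$-orbit of $x[p^n]$ is the full sphere of radius $n$ about $x[1]$ in the Bruhat--Tits tree at $p$. It is therefore the Hecke orbit $T_{p^n}(x[1])$, and Hecke equidistribution plus a joining theorem does what you want.

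For $p$ split, the orbit is only the set of vertices whose geodesic to $x[1]$ leaves the apartment of $L_p^\times$ at the first step. This is a subset of fixed proportion $d(p)=\frac{p-1}{p+1}$ that misses $2p^{n-1}$ points, so the defect is not bounded. A subset of proportion $d(p)$ of an equidistributed multiset is only forced to meet every point of the finite target if $1-d(p)$ is smaller than the least limiting mass of a point. That mass is at most $1/\#(\text{target})$, so it tends to $0$ as $\#Sh_U^\ssrm(\overline{\F}_\ell)$ grows.

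Meanwhile your endgame forces $\ell$ to be large. Since $\ell\nmid N$, $A$ has good reduction at $\ell$, so there is no toric part, character group or Ribet sequence to push to. What you actually obtain is $M\sum_\sigma\pi(s_\sigma)=0$ in $A(\overline{\F}_\ell)$, a group in which every element is torsion. So \emph{$\phi$ nonzero on the cuspidal part} is not the relevant condition. A contradiction only arises once $\#Sh_U^\ssrm(\overline{\F}_\ell)$ exceeds the fibre bound of $\pi$ times $\#A(\overline{\F}_\ell)[M]$, up to the factor coming from the block sizes $\#\Tcal_{i_0}$.

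For a fixed split $p$ these two demands on $\ell$ conflict. This is exactly why the paper's first version, Corollary~\ref{vertical Mazur's conjecture}, must assume $\frac{2}{p+1}<c(\pi,\ell,\Tcal)$. To close the gap along your route, you would need equidistribution of the Galois orbits themselves, which is the Cornut--Vatsal torus-orbit analysis the paper is built to avoid.

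The paper's proof of this statement (Theorem~\ref{S^circ}) gives up surjectivity altogether. It fixes an infinite set $\Scal^\circ$ of primes at which $x[1]$ is supersingular, removes $p$, and takes $\ell\in\Scal^\circ$ large. It then uses only two facts:
\begin{enumerate}
\item $\Gfrak_{p^k}(x[p^k]^{(\ell)})$ is a subset of proportion $d(p)$ of $T_{p^k}(x[1]^{(\ell)})$;
\item $T_{p^k}(x[1]^{(\ell)})$ equidistributes on $\prod_i\widetilde{\Delta}^{\Tcal_i}(Sh_U^\ssrm(\overline{\F}_\ell))$.
\end{enumerate}
Call a pair $(\underline{h},\underline{h}')$ bad if its trace-difference lies in the image of $A(L[\infty])_{\mathrm{tors}}$. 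Bad pairs have density $O(\#A(L[\infty])_{\mathrm{tors}}/\#Sh_U^\ssrm(\overline{\F}_\ell))$, which drops below $d(p)^2$ once $\ell$ is large. Hence for $k\gg1$ some $\nu,\nu'\in\Gfrak_{p^k}$ give a non-torsion difference, and the trace cannot be torsion.

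Separately, your claim that the coprimality hypothesis rules out commensurable pairs, so that the limit is a full product of uniform measures, is not justified. The paper keeps the commensurability partition $\Tcal=\bigsqcup_i\Tcal_i$ and works with products of twisted diagonals. Your \emph{vary one coordinate} step would have to be run block by block accordingly.
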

	We refer the reader to Corollary \ref{vertical Mazur's conjecture, stronger version} for the precise statement. While Mazur's conjecture naturally extends to Shimura curves, Theorem \ref{Theorem 1.2} was previously established in that setting by Cornut and Vatsal \cite[Theorem 4.10]{CornutVatsal2007} only under additional hypotheses on the level structure (assumption (H2) in \emph{loc.cit.}). Their proof (\cite{CornutVatsal2005,CornutVatsal2007}) relies on a delicate analysis of the torus action on adelic points of units in the quaternion algebra. Our approach, by contrast, is distinct and circumvents these restrictions.
	
	We also formulate and prove a \emph{horizontal} version of Mazur's conjecture. Let $L[\infty]=\bigcup_k L[k]$ and fix a set of representatives $\Tcal\subset \Gal(L[\infty]/L)$ for the finite quotient $\Gal(L[1]/L)$ of $\Gal(L[\infty]/L)$.
	
	\begin{theorem}[Corollary \ref{horizontal Mazur's conjecture}, Horizontal Mazur's conjecture]\label{Theorem 1.3}
		For any \emph{prime} $p\gg1$, we have
		\[
		\sum_{\sigma\in\Tcal}\sigma(y[p])\notin A(L[\infty])_\mathrm{tors}.
		\]
	\end{theorem}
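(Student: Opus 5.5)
The idea is to argue by contradiction and reduce non-torsion to a statement about simultaneous supersingular reductions, which ergodic equidistribution results then handle.

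Suppose $z_p:=\sum_{\sigma\in\Tcal}\sigma(y[p])$ is torsion for infinitely many primes $p$. First I would bound the order of this torsion uniformly in $p$. The torsion subgroup $A(L[\infty])_{\mathrm{tors}}$ is finite: $L[\infty]/L$ is abelian, and finiteness follows from Ribet/Zarhin-type results for abelian varieties over abelian extensions with no CM by $L$. If $A$ has CM, I would first pass to an isogeny factor or treat it separately. So there is a fixed $M$ with $Mz_p=0$ for all such $p$.

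Next I would reduce modulo auxiliary primes. Fix finitely many primes $\ell_1,\dots,\ell_r$ inert in $L$ and coprime to $N$; there the Heegner points $x[p]$ have supersingular reduction. Reduction is injective on prime-to-$\ell$ torsion, so $z_p$ torsion forces the reduced divisor $\sum_{\sigma\in\Tcal}\sigma(\tilde x[p])$ to map to a torsion class of bounded order in $\tilde A(\overline{\F}_{\ell})$. Equivalently, the reductions land in a proper "bad" subset of the finite supersingular set. By a standard rigidity argument (as in Cornut--Vatsal), $\pi$ has positive degree, so a nonzero divisor of degree $|\Tcal|$ with fixed support combinatorics cannot map to a fixed torsion value under every choice of configuration. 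This gives a contradiction once I know that the tuple $(\sigma\tilde x[p])_{\sigma\in\Tcal}$, together with its $\Gal(L[\infty]/L[1])$-conjugates, realizes every configuration of supersingular points.

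The key step, and the main obstacle, is therefore this surjectivity. I would prove that as $p\to\infty$ the Galois orbit $\Gal(L[p]/L[1])\cdot(\sigma\tilde x[p])_{\sigma\in\Tcal}$ equidistributes in the product of supersingular sets over the components indexed by $\Tcal$. The key steps are as follows.
\begin{enumerate}[(i)]
\item Via Shimura reciprocity, the Galois orbit of $\tilde x[p]$ is identified with the orbit of $(\Ocal_L\otimes\Z_p)^\times/\Ocal_p^\times$, of size about $p$, acting on the definite quaternion double coset. This orbit contains the $T_p$-Hecke orbit of the reduction, since points of conductor $p$ are $p$-isogenous to points of conductor $1$.
\item Comparing orbit sizes, the Galois orbit fills a positive proportion of the Hecke orbit. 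Equidistribution of Hecke operators (Ramanujan bounds for $T_p$, i.e.\ Deligne/Eichler) then gives equidistribution of each single coordinate.
\item For the joint distribution across the distinct $\sigma\in\Tcal$, the coordinates differ by translation by non-trivial classes in $\Gal(L[1]/L)$, i.e.\ by elements of $\widehat L^\times$ not lying in the relevant commensurator. A joinings classification (as used for the vertical case earlier in the paper) shows that any limit measure is the product of Haar measures.
\end{enumerate}
I expect step (iii), ruling out exceptional joinings uniformly in $p$, to be the hardest. There I would use the fact that $p\gg1$ avoids the finitely many primes dividing the discriminants of the relevant quaternion orders, so that non-trivial correlations between coordinates would force an isogeny relation incompatible with the $\sigma$ being distinct.

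Combining the three parts: for $p\gg1$ every supersingular configuration occurs in the Galois orbit, so some conjugate of $z_p$ reduces to a non-torsion class. This contradicts the torsion assumption and proves the statement.
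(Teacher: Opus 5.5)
Your outline follows the paper's strategy: contradiction, one auxiliary inert prime $\ell$, the Galois orbit of $x[p]^{(\ell)}$ inside the Hecke orbit $T_p(x[1]^{(\ell)})$, Hecke equidistribution plus joinings, and a torsion count in $A(\overline{\F}_\ell)$. However, step (iii), which you call the heart of the matter, asserts something false. You claim that distinct classes of $\Gal(L[1]/L)$ give non-commensurable twisted lattices. From this you conclude that the joint limit is product Haar measure and that every configuration in $\prod_{\sigma\in\Tcal}Sh_U^\ssrm(\overline{\F}_\ell)$ is realized.

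Take the representatives in $\Tcal$ to be ideles of finite support (e.g.\ attached to ideals); this is an allowed choice. Then for $p\gg1$ every $\sigma\in\Tcal$ has $\sigma_p=1$. One may also take $\sigma_\ell=1$, since $L_\ell^\times=\Q_\ell^\times\Ocal_{L,\ell}^\times$ fixes points of conductor prime to $\ell$. Then $\sigma\tau h[p]=\tau h[p]\sigma$ in $G(\A_{\Q,f})$ for every $\tau\in\widehat{L}^\times$. So $\sigma\tau(x[p])$ is a point of the fixed prime-to-$p\ell$ Hecke correspondence $U\sigma U$ applied to $\tau(x[p])$; equivalently, $x[p]$ and $\sigma(x[p])$ are linked by an isogeny whose degree does not depend on $p$. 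Reduction at $v(\ell)$ respects this.

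Hence every pair $(\tau(x[p])^{(\ell)},\sigma\tau(x[p])^{(\ell)})$ lies in the graph of a correspondence of degree $[U:U\cap\sigma U\sigma^{-1}]$, independent of $p$. That graph is a proper subset of $Sh_U^\ssrm(\overline{\F}_\ell)^2$ as soon as the supersingular set is larger than this degree, and you need $\ell$ large anyway. In lattice terms, twisting by elements supported away from the places where the ergodic theory happens never leaves the commensurability class of the arithmetic lattice. So the isogeny relation you hoped to exclude is actually present, and ``every configuration occurs'' fails. Your final contradiction moves the coordinates independently, so it has nothing to run on.

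This is why the paper does not claim independence. It partitions $\Tcal=\bigsqcup_i\Tcal_i$ by commensurability and asserts only that $\Rbf_\Tcal$ maps $\Gfrak_p(x[p])$ onto $\prod_i\widetilde{\Delta}^{\Tcal_i}(Sh_U^\ssrm(\overline{\F}_\ell))$. It then argues by changing an entire block $\Tcal_{i_0}$ at once. It chooses $\ell$ with $\#Sh_U^\ssrm(\overline{\F}_\ell)>(t_1\cdot r!)^{2\dim A}\,t_2$ in order to produce two values whose difference has order not dividing $t_1\cdot r!$. Your remark that $\pi$ has positive degree does not supply this, and you never fix $\ell$ large.

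Two further corrections.
\begin{itemize}
\item In (i) the containment is reversed. $\Gfrak_p(x[p]^{(\ell)})$, of size $p+1$ or $p-1$, sits inside $T_p(x[1]^{(\ell)})$, of degree $p+1$.
\item In (ii), occupying a positive proportion of an equidistributed orbit implies neither equidistribution nor hitting every point. What is needed is that the missing proportion $1-d(p)\in\{0,\,2/(p+1)\}$ eventually falls below the smallest limiting mass $c(\pi,\ell,\Tcal)$ of a target point, with $\ell$ fixed first. That is how $p\gg1$ enters.
\end{itemize}
Also note that $A(\overline{\F}_\ell)$ is entirely torsion. The contradiction must therefore be about orders dividing $t_1\cdot r!$, not about a ``non-torsion class''.
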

	
	Both theorems are consequences of a more general result. Let $n=\prod_{i=1}^s p_i^{e_i}$ be a positive integer coprime to $ND_{L/\Q}$. Define the density term
	\[
	d(n):=\prod_{i=1}^s d(p_i^{e_i}),
	\qquad
	d(p^e)=
	\begin{cases}
		1, & \text{if $p$ is inert in $L$},\\
		\frac{p-1}{p+1}, & \text{if $p$ splits in $L$}.
	\end{cases}
	\]
	Fix a prime $\ell\gg 1$ inert in $L$\footnote{The condition that $\ell$ is inert in $L$ ensures that all Heegner points $x[n]$ have \emph{supersingular} reduction at $\ell$.} (satisfying auxiliary conditions detailed in §\ref{Mazur's conjecture-1}) and a finite subset $\Tcal\subset \Gal(L[\infty]/L)$.
	
	\begin{theorem}[Corollary \ref{generalized Mazur's conjecture}]
		There exists a constant $c(\pi,\ell,\Tcal)\in(0,1)$, depending on the morphism $\pi$, the prime $\ell$, and the set $\Tcal$, such that for any $n\gg1$ coprime to $\ell ND_{L/\Q}$ satisfying $1-d(n)<c(\pi,\ell,\Tcal)$, we have
		\[
		\sum_{\sigma\in\Tcal}\sigma(y[n])\notin A(L[\infty])_\mathrm{tors}.
		\]
	\end{theorem}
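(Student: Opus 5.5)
We argue by contradiction, in the spirit of Cornut's approach but with Ratner's theorem replaced by a counting comparison between Galois orbits and Hecke orbits. Suppose $z:=\sum_{\sigma\in\Tcal}\sigma(y[n])$ is torsion, say $Mz=0$. Since $\ell\gg1$ is inert in $L$, every $x[n]$ reduces to a supersingular point modulo a prime $\lambda\mid\ell$ of $L[\infty]$. The set $\Scal$ of supersingular points of $X_0(N)_{\bar\F_\ell}$ is finite and identifies with a double coset space $B^\times\backslash\widehat B^\times/\widehat R^\times$ for the definite quaternion algebra $B$ ramified at $\ell,\infty$. Reduction gives a map $\mathrm{red}\colon\Gal(L[\infty]/L)\cdot x[n]\to\Scal$. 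Choose $\ell$ (the auxiliary conditions of §\ref{Mazur's conjecture-1}) so that reduction mod $\lambda$ is injective on $M$-torsion of $A$. A torsion relation $Mz=0$ then becomes the relation $\sum_{\sigma\in\Tcal}\pi(\mathrm{red}(\sigma x[n]))=\text{torsion}$ in $A(\bar\F_\ell)$. This must hold for every Galois conjugate $\tau z$, $\tau\in\Gal(L[\infty]/L[1])$.

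The key step is to show that, as $\tau$ ranges over $\Gal(L[n]/L[1])$, the tuple $(\mathrm{red}(\sigma\tau x[n]))_{\sigma\in\Tcal}\in\Scal^{\Tcal}$ takes every value in a large subset of the connected component it must lie in. Via CM theory (Shimura reciprocity), $\tau$ acts on $\mathrm{red}(x[n])$ through $\widehat{L}^\times$ embedded in $\widehat B^\times$. The orbit of the order-$n$ torus in the local tree at $p\mid n$ reaches a proportion $d(p^e)$ of the vertices at distance $e$. For $p$ inert the torus of conductor $p^e$ acts transitively on the sphere of radius $e$, while for $p$ split the fraction $\frac{p-1}{p+1}$ is missed because of the two boundary directions. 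So the Galois orbit occupies at least a $d(n)$-proportion of the Hecke orbit $T_n$-sphere, in terms of counting measure. The Hecke orbit equidistributes in $\Scal^{\Tcal}$ modulo the obvious obstructions: equidistribution of Hecke operators, together with the classification of joinings, which forces the diagonal copies indexed by distinct $\sigma\in\Tcal$ to behave independently once the elements of $\Tcal$ are distinct modulo a suitable subgroup. Combining the two, if $1-d(n)<c$ then the Galois orbit meets every fibre of the product "norm/connected-component" map. I expect this to be the main obstacle. Two things must be made quantitative: the equidistribution (an error term uniform in $n$, with $\Tcal$ fixed), and the joining classification for several conjugates simultaneously. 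The constant $c(\pi,\ell,\Tcal)$ comes out as the minimum mass of a fibre in $\Scal^{\Tcal}$.

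Once surjectivity onto $\prod_{\sigma}\Scal_{\sigma}$ (the appropriate components) is known, the contradiction is formal. The relation would give that the map $\Scal^{\Tcal}\to A(\bar\F_\ell)/\text{tors}$, $(s_\sigma)\mapsto\sum_\sigma\pi(s_\sigma)$, is constant on a product of components. Varying one coordinate $s_{\sigma_0}$ while fixing the others shows that $\pi(s)-\pi(s')$ is torsion for all supersingular $s,s'$ in the same component. Since divisors supported on supersingular points generate, via the Deuring–Serre/Ribet description, a subgroup of the character group of the toric part at $\ell$ that maps with finite cokernel onto the corresponding part of $A$, this forces the component of $A$ seen through $\pi$ to vanish. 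Choosing $\ell$ among primes where $\pi$ is non-degenerate in this sense (again an auxiliary condition on $\ell$) contradicts nontriviality of $\pi$. Finally, the vertical and horizontal corollaries follow by checking $1-d(n)\to0$. For $n=p^k$ with $p$ inert we have $d=1$. For the trace over $L[p^\infty]/H[p^\infty]$ the relevant $\Tcal$ is a fixed finite set independent of $n$. For $n=p$ prime, $1-\frac{p-1}{p+1}=\frac{2}{p+1}\to0$.
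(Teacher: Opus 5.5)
Your overall architecture matches the paper's: the Galois orbit $\Gfrak_n(x[n]^{(\ell)})$ sits inside the Hecke orbit $T_{a(n)}(x[1]^{(\ell)})$ with proportion $d(n)$, the Hecke orbits equidistribute by Clozel--Oh--Ullmo plus the joining classification, and $c(\pi,\ell,\Tcal)$ is the minimal fibre mass. But the step you call formal does not go through.

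\textbf{The final contradiction.} Every point of $A(\overline{\F}_\ell)$ is torsion, so $A(\overline{\F}_\ell)/\mathrm{tors}=0$. Hence ``$\sum_\sigma\pi(\mathrm{red}(\sigma x[n]))$ is torsion'', ``the map to $A(\overline{\F}_\ell)/\mathrm{tors}$ is constant'' and ``$\pi(s)-\pi(s')$ is torsion'' carry no information. Your rescue via the Deuring--Serre/Ribet description is also unavailable. Since $\ell\nmid\Nfrak$, $A$ has good reduction at $\ell$ and no toric part there; that description concerns Jacobians of level divisible by $\ell$.

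The missing idea is a bound on the \emph{order} of the torsion that is uniform in $n$. By Cornut's Lemma 4.1, $A(L[\infty])_{\mathrm{tors}}$ is finite, say of order $t_1$. Take $\nu,\nu'\in\Gfrak_n$ such that the simultaneous reductions of $\nu z$ and $\nu' z$ differ only on one block $\Tcal_{i_0}$, with values $x$ and $x'$ there. Reducing $\nu z-\nu' z$ gives $\#\Tcal_{i_0}(\pi(x)-\pi(x'))\in A(\overline{\F}_\ell)[t_1]$. Hence $\pi(x')\in\pi(x)+A(\overline{\F}_\ell)[t_1r!]$, a set with at most $(t_1r!)^{2\dim A}$ elements. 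If $t_2$ bounds the fibres of $\pi$, at most $(t_1r!)^{2\dim A}t_2$ points $x'$ are possible. The paper's auxiliary condition on $\ell$ is exactly $\#Sh_U^{\ssrm}(\overline{\F}_\ell)>(t_1r!)^{2\dim A}t_2$, attainable by Eichler's mass formula, and pigeonhole then gives the contradiction.

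This also repairs the order of quantifiers in your first paragraph. Choosing $\ell$ in terms of an $M$ with $Mz=0$ would let $\ell$ depend on $n$, whereas $t_1$ does not. No injectivity of reduction on torsion is needed.

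\textbf{The block structure.} You take the simultaneous reduction to be onto the full product and vary a single coordinate $s_{\sigma_0}$, assuming the elements of $\Tcal$ are ``distinct modulo a suitable subgroup''. The statement allows any finite $\Tcal$. The joining classification only gives surjectivity onto $\prod_{i=1}^k\widetilde{\Delta}^{\Tcal_i}(Sh_U^{\ssrm}(\overline{\F}_\ell))$, where $\Tcal=\bigsqcup_i\Tcal_i$ is the partition into commensurability classes of the $\Gamma_\sigma$. You can only move a whole block at once. This is why the factor $\#\Tcal_{i_0}$, and the bound $t_1\cdot r!$ rather than $t_1$, appear above.

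Finally, the effective, $n$-uniform error term you flag as the main obstacle is not needed. The statement is only for $n\gg1$. The paper uses just the qualitative convergence of the Hecke measures, together with a fixed margin between $1-d(n)$ and the minimal fibre mass.
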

	The constant $c(\pi,\ell,\Tcal)$ is given explicitly in \eqref{c(pi,ell,T)}. The condition $1-d(n)<c(\pi,\ell,\Tcal)$ is automatically satisfied if $n$ has no prime factor that splits in $L$; thus, Theorem \ref{Theorem 1.3} and Corollary \ref{horizontal Mazur's conjecture} follow directly.
	\begin{remark}
		Corollaries \ref{generalized Mazur's conjecture}, \ref{vertical Mazur's conjecture}, and \ref{horizontal Mazur's conjecture} are established for arbitrary Shimura curves over $\Q$, not merely modular curves.
	\end{remark}
	
	The proof of Theorem \ref{generalized Mazur's conjecture} reduces to a distribution problem concerning the Galois orbits of the Heegner points $x[n]$. Specifically, let $X_0(N)^\ssrm(\overline{\F}_\ell)$ denote the supersingular locus of $X_0(N)(\overline{\F}_\ell)$, and let $\Gfrak_n \subset \Gal(L[\infty]/L)$ be a specific subset for each $n>0$. In Theorem \ref{Galois orbit equidistributes}, we characterize the following simultaneous supersingular reduction map for $n\gg 1$:
	\[
	\Rbf_{\Tcal}\colon
	\{\nu(x[n])|\nu\in\Gfrak_n\}\to
	\prod_{\sigma\in\Tcal}X_0(N)^\ssrm(\overline{\F}_\ell),
	\quad
	x\mapsto(\sigma(x)^{(\ell)})_{\sigma\in\Tcal}.
	\]
	Here $\sigma(x)^{(\ell)}$ denotes the reduction modulo $\ell$ of $\sigma(x)$. Under a suitable commensurability condition on $\Tcal$, we show that $\Rbf_{\Tcal}$ is surjective for $n\gg 1$.
	
	The strategy of the proof for Theorem \ref{Galois orbit equidistributes} relies on the interplay between the Hecke orbit $T_n(x[1])$ and the Galois orbit $\Gfrak_n(x[n])$. We demonstrate in Theorem \ref{theorem on Hecke orbit vs Galois orbit} that $\Gfrak_n(x[n])$ is contained in $T_n(x[1])$. Furthermore, their relative sizes are quantified by $d(n)$: as $1-d(n)$ decreases, the Galois orbit occupies a larger proportion of the Hecke orbit. Consequently, assuming $1-d(n)<c(\pi,\ell,\Tcal)$, it suffices to prove that the Hecke orbits $(T_n(x[1]))_n$ equidistribute on a specific subset of $\prod_{\sigma\in\Tcal} X_0(N)^\ssrm(\overline{\F}_\ell)$.
	
	To achieve this, we combine the equidistribution results of Clozel--Oh--Ullmo \cite{ClozelOhUllmo2001} with the measure classification of Einsiedler--Lindenstrauss \cite{EinsiedlerLindenstrauss2019}. Heuristically, the former establishes the equidistribution of $T_n(x[1]^{(\ell)})$ on a \emph{single} copy of $X_0(N)^\ssrm(\overline{\F}_\ell)$, while the latter implies that the simultaneous supersingular reductions (twisted by $\Tcal$) generate an \emph{algebraic} measure (\cite[Definition 1.2]{EinsiedlerLindenstrauss2019}) as $n\to+\infty$. We then employ classification results for subgroups of self-products of simple non-commutative groups to conclude.
	
	By refining these arguments, we prove the following stronger version of Corollary \ref{generalized Mazur's conjecture}, which implies Theorem \ref{Theorem 1.2} and Corollary \ref{vertical Mazur's conjecture, stronger version}:
	
	\begin{theorem}[Theorem \ref{S^circ}]
		Fix an infinite set $\Scal^\circ$ of primes $\ell$ such that $x[1]$ has supersingular reduction at every $\ell \in \Scal^\circ$. Then, for any $0<c\ll1$, there exists $n_0$ such that for any $n>n_0$ coprime to the primes in $\Scal^\circ$ and $\Nfrak D_{L/\Q}$ with $1-d(n)<c$, we have
		\[
		\sum_{\sigma\in\Tcal}\sigma(y[n])\notin
		A(L[\infty])_\mathrm{tors}.
		\]
	\end{theorem}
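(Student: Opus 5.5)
We argue by contradiction, building on the single-prime argument behind Corollary \ref{generalized Mazur's conjecture}, and use the infinitude of $\Scal^\circ$ to make the threshold on $1-d(n)$ independent of $\ell$.

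\textbf{Step 1: torsion forces a reduction identity.} Suppose $\sum_{\sigma\in\Tcal}\sigma(y[n])$ is torsion for infinitely many admissible $n$. Choose $\ell\in\Scal^\circ$ coprime to $n$ and large enough that the torsion of $A(L[\infty])$ injects under reduction at a prime above $\ell$. This injectivity holds for $\ell\gg1$ outside a finite set of primes, since the torsion of $A$ over the anticyclotomic tower is finite away from finitely many primes. For every $\nu\in\Gfrak_n$, the point $\sum_{\sigma}\sigma\nu(y[n])$ is then torsion of bounded order $M$. Reducing modulo $\ell$, the tuple $(\sigma\nu(x[n])^{(\ell)})_{\sigma\in\Tcal}$ lies in the proper subset
\[
Z_\ell:=\Bigl\{(z_\sigma)\in\prod_{\sigma\in\Tcal}X_0(N)^\ssrm(\overline{\F}_\ell):\ M\sum_\sigma\pi(z_\sigma)=0\Bigr\}.
\]

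\textbf{Step 2: compare Galois and Hecke orbits.} By Theorem \ref{theorem on Hecke orbit vs Galois orbit}, $\Gfrak_n(x[n])\subset T_n(x[1])$, and the Galois orbit has relative size at least $d(n)$. Its image under $\Rbf_\Tcal$ therefore carries mass at least $d(n)$ inside the image of $T_n(x[1])$. By Clozel--Oh--Ullmo together with the Einsiedler--Lindenstrauss joining classification, as used for Theorem \ref{Galois orbit equidistributes}, the Hecke orbits equidistribute, as $n\to\infty$, to the algebraic measure on a product orbit $Y_\ell$. Hence for large $n$ we get $\mu_{Y_\ell}(Z_\ell\cap Y_\ell)\ge d(n)-o(1)$. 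If we can show
\[
\mu_{Y_\ell}(Z_\ell)\le 1-c_\ell
\]
with $c_\ell$ bounded below uniformly in $\ell$, this contradicts $1-d(n)<c$. The key input is that $\pi$ is nonconstant. Fixing all coordinates but one, the condition $M\pi(z_{\sigma_0})=\text{fixed}$ cuts out a set of proportion at most $\deg$-type bound$/\#X^\ssrm(\overline{\F}_\ell)$. Since $\#X^\ssrm(\overline{\F}_\ell)\asymp \ell N/12\to\infty$, this gives $\mu(Z_\ell)\to0$ as $\ell\to\infty$ in $\Scal^\circ$, so any fixed $c<1$ works once $\ell$ is large.

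\textbf{Step 3: uniformity in $n$.} The convergence in Step 2 is along $n$ for a fixed $\ell$. We fix one $\ell\in\Scal^\circ$ with $\mu(Z_\ell)<1-c$ (margin included) and get $n_0(\ell)$. For the finitely many $n$ divisible by $\ell$ we swap in a second prime $\ell'\in\Scal^\circ$ with the same property. The hypothesis that $n$ is coprime to $\Scal^\circ$ makes this swap unnecessary, and it also keeps the Hecke operators $T_n$ at places where the equidistribution theorems apply.

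\textbf{Main obstacle.} The hard step is the bound on $\mu_{Y_\ell}(Z_\ell)$ in Step 2. The limiting algebraic measure may be supported on a diagonal-type subvariety when elements of $\Tcal$ are commensurable, in which case the coordinates are tied by Hecke correspondences. I would handle this by projecting to one coordinate per commensurability class and using the subgroup classification for self-products of simple groups. This reduces the problem to bounding the fibres of $\sum_j\pi\circ(\text{correspondence}_j)$ on a single supersingular set. The case to rule out is cancellation of the $\sigma$-twists; nonconstancy of that sum would follow as in \eqref{c(pi,ell,T)}.
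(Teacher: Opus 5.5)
This is essentially the paper's argument: you fix a large $\ell\in\Scal^\circ$, use $\Gfrak_n(x[n])\subset T_{a(n)}(x[1])$ with proportion $d(n)$ and the equidistribution of $\Rbf_{\Tcal}(T_{a(n)}(x[1]^{(\ell)}))$ on $\prod_i\widetilde{\Delta}^{\Tcal_i}(Sh_U^\ssrm(\overline{\F}_\ell))$, and show that torsion-compatible configurations have proportion $O(1/s(\ell))$; the paper counts pairs $(\underline{h},\underline{h}')\in X(\ell)$ where you count single configurations in $Z_\ell$, which is equivalent bookkeeping (and the injectivity of reduction in your Step 1 is not needed, since $MP=0$ already forces $M\overline{P}=0$). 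The step you flag as the main obstacle is exactly where the paper just asserts that each $\underline{h}$ has at most $s(\ell)^{k-1}\,\#A(L[\infty])_\mathrm{tors}$ partners, implicitly treating all coordinates in a class $\Tcal_i$ as equal (as in the proof of Corollary \ref{generalized Mazur's conjecture}, where the class contributes $\#\Tcal_{i_0}(\pi(x)-\pi(x'))$): under that convention the fibre bound $(\#\Tcal_i)^{2\dim A}t_2$ is immediate, whereas if the $\Tcal_i$-coordinates are genuinely tied by Hecke correspondences, the non-cancellation you need is supplied neither by the paper nor by \eqref{c(pi,ell,T)}, which only defines a constant.
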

	Notably, the proof of this theorem does not require the surjectivity of $\Rbf_{\Tcal}$ on $\Gfrak_n(x[n])$; rather, it exploits the flexibility of choosing arbitrarily large primes $\ell$ from the infinite set $\Scal^\circ$ (see §\ref{Mazur's conjecture-2}).
	
	This work reduces number-theoretic questions to ergodic theoretic and distributional problems, a theme present in \cite{Cornut2002,Vatsal2002,CornutVatsal2005,CornutVatsal2007}. However, whereas previous works utilized Ratner's theorems on unipotent flows, we employ the classification of joining measures (improving upon \cite{EinsiedlerLindenstrauss2019}) and Hecke operator equidistribution (\cite{ClozelOhUllmo2001}). Our key innovation is the exploitation of the close relationship between Hecke and Galois orbits to deduce distributional properties of the latter. We note that \cite{EinsiedlerLindenstrauss2019} has also been applied to simultaneous supersingular reductions of elliptic curves in \cite{AkaLuethiMichelWieser2022}.
	
	The original conjecture of Mazur has significant applications to the Iwasawa Main Conjecture (\cite{Bertolini1995,NekovarSchappacher1999,Howard2004b}). Potential applications of our results include establishing the non-triviality of Kolyvagin systems for Galois modules of $\Gal(\overline{L[p]}/L[p])$ for $p\gg1$. Furthermore, our techniques may yield non-vanishing results for Rankin--Selberg $L$-values of automorphic representations twisted by characters of sufficiently large conductors, analogous to \cite[Theorem 1.13]{CornutVatsal2007} but in a broader context. We intend to explore these avenues in future work.
	
	The ergodic argument from \cite{Cornut2002} was simplified and generalized to Shimura varieties of Hodge type in \cite{Zhang2025}, and the results of \cite[§2]{CornutVatsal2005} were generalized from $\mathrm{SL}_2$ to reductive groups in \cite[Appendix]{Zhang2025b}. As demonstrated in §\ref{Hecke orbit and Galois orbit}, the relation between Hecke and Galois orbits generalizes naturally to higher rank groups.
	
	\subsection*{Outline}
	The paper is organized as follows. From §\ref{Commensurability criterion} to §\ref{Shimura varieties of Hodge type}, we work in a general setting. In §\ref{Commensurability criterion}, we classify subgroups of a self-product of a simple non-commutative group and extend this to products of simple groups. Section \ref{Hecke operators} introduces Hecke operators. In §\ref{joining of measures}, we establish distribution results for Hecke orbits on \emph{multiple} copies of the double quotient spaces $\Gamma\backslash G(\A_f^\Scal)/U^{\Scal'}$. Section \ref{Shimura varieties of Hodge type} applies these results to Shimura varieties of Hodge type, obtaining explicit equidistribution statements. In §\ref{Heegner points on Shimura curves}, we define Heegner points on Shimura curves. Section \ref{Hecke orbit and Galois orbit} relates Hecke orbits to Galois orbits. In §\ref{Mazur's conjecture-1}, we apply the preceding results to prove variants of Mazur's conjecture. Finally, §\ref{Mazur's conjecture-2} refines the arguments of §\ref{Mazur's conjecture-1} to prove the stronger results.

	\subsection*{Notations}
	For a finite set of places $\bcS$ of $\Q$, we write
	\[
	\Q_\bcS
	=
	\prod_{\ell\in\bcS}
	\Q_p,
	\quad
	\mathbb{A}^\bcS
	=
	\prod_{v\notin\bcS}\,'\Q_v.
	\]
	For an abstract/algebraic group $G$, we write $G^1$ for the derived abstract/algebraic subgroup of $G$ and $Z_G$ for the center of $G$.

	\section{Commensurability criterion}\label{Commensurability criterion}
	Recall that two subgroups $G_1,G_2$ of a group $G$ are \textit{commensurable} if $G_1\cap G_2$ has finite index in both $G_1$ and $G_2$. The \emph{commensurator} of $G_1$ in $G$ is the set of elements $g\in G$ such that $G_1$ and $gG_1g^{-1}$ are commensurable; we denote it by $\Ccal(G_1)$. If $G_2=gG_1g^{-1}$ for some $g\in G$, then $G_1$ and $G_2$ are commensurable if and only if $g\in\Ccal(G_1)$.

	For a group $G$ and a finite set $\mathcal{T}$, write
	\[
	\Delta=\Delta^\Tcal
	\colon G\rightarrow\prod_{\sigma\in\mathcal{T}} G
	\]
	for the diagonal map. For any $\sigma_0\in\mathcal{T}$, write
	\[
	\mathrm{pr}_{\sigma_0}\colon\prod_{\sigma\in\mathcal{T}} G\rightarrow G
	\]
	for the projection onto the $\sigma_0$-th component.
	For a non-empty subset $\mathcal{T}'\subset\mathcal{T}$, define a subgroup of $\prod_{\sigma\in\mathcal{T}} G$ by
	\[
	G^{\mathcal{T}'}
	=
	\left\{
	(g_{\sigma})
	\in
	\prod_{\sigma\in\mathcal{T}} G
	\mid
	g_{\sigma}=1\text{ for all }\sigma\notin\mathcal{T}'
	\right\}.
	\]
	We write $\Delta^{\mathcal{T}'}\colon G\to G^{\mathcal{T}'}\hookrightarrow G^{\mathcal{T}}$ for the partial diagonal map.

	A subgroup $H$ of $G^{\mathcal{T}}$ is called a \textit{product of diagonals} if there exist pairwise disjoint, non-empty subsets $\mathcal{T}_1,\cdots,\mathcal{T}_r$ of $\mathcal{T}$ such that
	$H=\prod_{i=1}^r\Delta^{\mathcal{T}_i}(G)$.
	(We do not require the union of the $\mathcal{T}_i$ to be all of $\mathcal{T}$.)
	Then we have
	\begin{proposition}\label{product of diagonals}
		Suppose that $G$ is simple and non-commutative. Then a subgroup $H$ of $G^{\mathcal{T}}$ is normalized by $\Delta^{\mathcal{T}}(G)$ if and only if it is a product of diagonals.
	\end{proposition}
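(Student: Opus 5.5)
The plan is to argue by induction on $|\mathcal{T}|$, using Goursat's lemma at each step. The only extra inputs are two standard facts about a simple non-commutative group $G$. First, $Z_G=1$. Second, every normal subgroup of $G^r$ is a product of some of the coordinate factors. The direction "a product of diagonals is normalized by $\Delta^{\mathcal{T}}(G)$" is immediate, since $\Delta^{\mathcal{T}}(x)$ conjugates $\Delta^{\mathcal{T}_i}(g)$ to $\Delta^{\mathcal{T}_i}(xgx^{-1})$. So the work is in the converse.

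\emph{Preliminary facts.} Let $N\trianglelefteq G^r$ and suppose some $n\in N$ has $n_j\neq 1$. Since $Z_G=1$, there is $x\in G$ with $[x,n_j]\neq1$. Put $x$ in the $j$-th slot and form the commutator with $n$. This gives a nontrivial element of $N\cap G_j$, where $G_j$ is the $j$-th factor. Because $N\cap G_j$ is normal in $G_j\simeq G$, it equals $G_j$. Hence $N=\prod_{j\in J}G_j$, where $J$ is the set of coordinates on which $N$ is nontrivial. Next, let $\alpha$ be an automorphism of $G$ commuting with all inner automorphisms. Then $\mathrm{inn}_{\alpha(x)}=\mathrm{inn}_x$, so $\alpha(x)x^{-1}\in Z_G=1$, and therefore $\alpha=\mathrm{id}$.

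\emph{Induction.} The case $|\mathcal{T}|=1$ holds because a subgroup normalized by $G$ is $1$ or $G$. For the inductive step, pick $\sigma_0\in\mathcal{T}$ and set $\mathcal{T}'=\mathcal{T}\setminus\{\sigma_0\}$, so $H\subset G\times G^{\mathcal{T}'}$.
\begin{itemize}
\item The projections $A=\mathrm{pr}_{\sigma_0}(H)$ and $B=\mathrm{pr}_{\mathcal{T}'}(H)$ are normalized by $G$ and by $\Delta^{\mathcal{T}'}(G)$ respectively. Hence $A\in\{1,G\}$, and by induction $B=\prod_{i=1}^r\Delta^{\mathcal{T}_i}(G)\simeq G^r$.
\item Goursat gives normal subgroups $N_1\trianglelefteq A$ and $N_2\trianglelefteq B$ and an isomorphism $\phi\colon A/N_1\xrightarrow{\sim} B/N_2$ whose graph is $H$. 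Since $H$ is stable under $\Delta^{\mathcal{T}}(G)$, $\phi$ is equivariant for conjugation by $G$ acting diagonally.
\item If $A=1$ or $N_1=G$, then $H=A\times B$, which is a product of diagonals. Here $\{\sigma_0\}$ is added as a new block in the case $A=G$.
\item Otherwise $A=G$ and $N_1=1$, so $H$ is the graph of a surjective equivariant homomorphism $\psi\colon B\simeq G^r\to G$ with kernel $N_2$.
\item By the first preliminary fact, $N_2$ is a product of factors. Since $B/N_2\simeq G$, it is the product of all factors but one, say the $j$-th.
\item The restriction of $\psi$ to $\Delta^{\mathcal{T}_j}(G)\simeq G$ is then an automorphism commuting with conjugation. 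By the second preliminary fact it is the identity.
\item Hence $H=\Delta^{\mathcal{T}_j\cup\{\sigma_0\}}(G)\times\prod_{i\neq j}\Delta^{\mathcal{T}_i}(G)$, which is again a product of diagonals.
\end{itemize}

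The main point requiring care is the last case: correctly reading the Goursat data as a conjugation-equivariant surjection $G^r\to G$, and classifying such maps. The classification of normal subgroups of $G^r$ is what pins $\psi$ down to a single coordinate projection.
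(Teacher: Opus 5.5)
Your proof is correct. The paper gives no argument of its own for this proposition; it simply cites \cite[Proposition 3.10]{Cornut2002}. Your induction on $|\mathcal{T}|$ via Goursat's lemma is therefore a self-contained proof, not a reconstruction of anything in the text.

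It rests on three purely group-theoretic inputs:
\begin{enumerate}
\item $Z_G=1$;
\item normal subgroups of $G^r$ are coordinate subproducts, which you prove with a commutator trick (the same device the paper uses in its proof of Proposition~\ref{H is a product of diagonals} for products of simple groups);
\item an automorphism of $G$ commuting with every inner automorphism is trivial.
\end{enumerate}

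The delicate points all check out:
\begin{itemize}
\item $N_1=H\cap(G\times 1)$ and $N_2$ are stable under diagonal conjugation, so $N_1\in\{1,G\}$ and $\phi$ is equivariant.
\item The identification $B\simeq G^r$ turns conjugation by $\Delta^{\mathcal{T}'}(x)$ into diagonal conjugation. This is what makes the restriction of $\psi$ to the surviving block centralize $\mathrm{Inn}(G)$.
\end{itemize}

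One step deserves a sentence: why $B/N_2\simeq G$ forces $N_2$ to omit exactly one factor. It follows because $G^m$ is trivial for $m=0$ and non-simple for $m\ge 2$, while $G$ is nontrivial and simple.

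The citation buys brevity. Your version buys transparency: it shows that only abstract simplicity is used, with no topology or Lie structure. That is the generality in which the paper actually applies the result: to the $p$-adic groups in Lemma~\ref{closure of diagonal image depending on commensurability}, and to the projectivized groups $\mathbf{P}H_\ell$ in the proof of Theorem~\ref{limit measure invariant under G'}.
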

	\begin{proof}
		This is \cite[Proposition 3.10]{Cornut2002}.
	\end{proof}

	In the next two subsections, §\ref{G simple} and §\ref{G not simple}, we treat separately the case where $G$ is simple and the case where $G$ is a product of simple groups; the latter is built on the former.

	\subsection{$G$ simple}\label{G simple}
	From now on, we assume that $G$ is a simple, non-commutative $p$-adic Lie group that is generated by one-parameter adjoint unipotent subgroups
	(in the sense of \cite{Ratner1995}). Let $(\Gamma_{\sigma})_{\sigma\in\mathcal{T}}$ be a finite family of discrete, cocompact subgroups of $G$, and set
	\[
	\Gamma=\prod_{\sigma\in\mathcal{T}}\Gamma_{\sigma}.
	\]
	Then $\Gamma\backslash G^{\mathcal{T}}$ is compact.
	We then define a partition of the finite set $\mathcal{T}$ by
	\begin{equation}\label{partition of T}
		\mathcal{T}=\bigsqcup_{i=1}^r\mathcal{T}_i,
	\end{equation}
	such that $\sigma,\sigma'\in\mathcal{T}_i$ if and only if $\Gamma_{\sigma}$ and $\Gamma_{\sigma'}$ are commensurable (this is well-defined since commensurability is an equivalence relation).
	\begin{lemma}\label{closure of diagonal image depending on commensurability}
		Suppose $\Tcal=\{\sigma,\sigma'\}$ and write $\Gamma_{\sigma,\sigma'}=\Gamma_{\sigma}\times\Gamma_{\sigma'}$. The closure of $\Gamma_{\sigma,\sigma'}\Delta(G)$ in $G^2=G^\Tcal$ is\footnote{Throughout this article, the closure is taken with respect to the natural topology (either $p$-adic or adelic).}
		\[
		\begin{cases*}
			G^2,
			&
			if $\Gamma_{\sigma}$ and $\Gamma_{\sigma'}$ are not commensurable;
			\\
			\Gamma_{\sigma,\sigma'}\Delta(G),
			&
			if $\Gamma_{\sigma}$ and $\Gamma_{\sigma'}$ are commensurable.
		\end{cases*}
		\]
	\end{lemma}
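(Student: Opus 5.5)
The plan is to translate the closure of $\Gamma_{\sigma,\sigma'}\Delta(G)$ into an orbit-closure question on the compact space $\Gamma_{\sigma,\sigma'}\backslash G^2$, and then apply Ratner's orbit closure theorem for $p$-adic Lie groups \cite{Ratner1995}. This is legitimate because $G$, hence $\Delta(G)\simeq G$, is generated by one-parameter adjoint unipotent subgroups. So the closure of the $\Delta(G)$-orbit of the identity coset in $\Gamma_{\sigma,\sigma'}\backslash G^2$ has the form $\Gamma_{\sigma,\sigma'}\backslash\Gamma_{\sigma,\sigma'}H$. Here $H\supset\Delta(G)$ is a closed subgroup such that $\Gamma_{\sigma,\sigma'}\cap H$ is a lattice in $H$. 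Consequently $\overline{\Gamma_{\sigma,\sigma'}\Delta(G)}=\Gamma_{\sigma,\sigma'}H$.

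Next I identify $H$. Ratner's theorem also allows us to assume that $\Delta(G)$ acts ergodically on $(\Gamma_{\sigma,\sigma'}\cap H)\backslash H$. The standard consequence is that $H$ is generated by unipotents, so it is normalized by $\Delta(G)$; alternatively one can pass to the subgroup generated by unipotents, which $\Delta(G)$ normalizes. Then Proposition \ref{product of diagonals} applies with $\Tcal=\{\sigma,\sigma'\}$. Since $H\supset\Delta(G)$ is a product of diagonals, either $H=\Delta(G)$ or $H=G^2$. The hardest step is this justification that $H$ is normalized by $\Delta(G)$. I would try first to cite the precise form of Ratner's theorem giving $H$ as the closure of a subgroup generated by unipotents (Ratner's class $\mathcal{F}$). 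Failing that, I would argue directly: $H$ contains $\Delta(G)$, and the intermediate closed subgroups $\Delta(G)\subset H\subset G^2$ correspond to closed subgroups of $G$ via $H=\Delta(G)\cdot(H\cap(1\times G))$. Here $H\cap(1\times G)$ is normalized by $G$, so by simplicity it is trivial or all of $G$, at least when it is closed and normal. This last route in fact bypasses Proposition \ref{product of diagonals}.

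The final step matches the two cases with commensurability. If $H=\Delta(G)$, then $\Gamma_{\sigma,\sigma'}\cap\Delta(G)=\Delta(\Gamma_\sigma\cap\Gamma_{\sigma'})$ is a lattice in $\Delta(G)\simeq G$. Then $\Gamma_\sigma\cap\Gamma_{\sigma'}$ is a lattice contained in both $\Gamma_\sigma$ and $\Gamma_{\sigma'}$. A lattice inside a lattice has finite index (compare covolumes), so $\Gamma_\sigma$ and $\Gamma_{\sigma'}$ are commensurable. Conversely, suppose they are commensurable. Then $\Gamma_{\sigma,\sigma'}\Delta(G)$ is already closed, since its image in the compact space $\Gamma_{\sigma,\sigma'}\backslash G^2$ is the orbit $\Delta(\Gamma_\sigma\cap\Gamma_{\sigma'})\backslash\Delta(G)$. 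That orbit is compact because $\Gamma_\sigma\cap\Gamma_{\sigma'}$ is cocompact in $G$, having finite index in $\Gamma_\sigma$. A compact orbit is closed, so the closure equals $\Gamma_{\sigma,\sigma'}\Delta(G)$.

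In the non-commensurable case $H=\Delta(G)$ is therefore impossible, so $H=G^2$ and the closure is $G^2$.
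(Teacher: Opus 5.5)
Your proposal is correct and follows essentially the paper's route. Ratner's orbit-closure theorem gives the closure as $\Gamma_{\sigma,\sigma'}H$ with $\Delta(G)\subset H\subset G^2$; only $H=\Delta(G)$ or $H=G^2$ is possible; and commensurability of $\Gamma_\sigma,\Gamma_{\sigma'}$ decides whether the orbit $\Gamma_{\sigma,\sigma'}\backslash\Gamma_{\sigma,\sigma'}\Delta(G)$ is compact, hence closed.

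A few details differ only slightly from the paper:
\begin{itemize}
\item The step you flag as hardest is automatic: any subgroup $H\supset\Delta(G)$ is trivially normalized by $\Delta(G)$.
\item Your direct argument via the closed normal subgroup $\{n\in G:(1,n)\in H\}$ is a valid, slightly cleaner substitute for the paper's appeal to Proposition \ref{product of diagonals}.
\item In the non-commensurable case you use the lattice conclusion of Ratner's theorem plus a covolume comparison. The paper instead uses its compactness/homeomorphism chain; your version avoids relying on that chain.
\end{itemize}
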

	\begin{proof}
		By assumption, $G$ is generated by one-parameter adjoint unipotent subgroups, thus we can apply Ratner's theorem on unipotent flows (\cite[Theorem 2]{Ratner1995}) to get that the closure of $\Gamma_{\sigma,\sigma'}\Delta(G)$ in $G^2$ is of the form $\Gamma_{\sigma,\sigma'}H$ for some closed subgroup $H$ of $G^2$. Clearly $H$ is normalized by $\Delta^{\mathcal{T}}(G)$ and thus by Proposition \ref{product of diagonals}, $H$ is either $\Delta(G)$ or $G^2$ (there are only two partitions of the set $\{\sigma,\sigma'\}$).

		Observe that the following natural bijection is a homeomorphism
		\[
		\left(
		\Gamma_{\sigma}\bigcap\Gamma_{\sigma'}
		\right)
		\backslash G
		\rightarrow
		\left(
		\Delta(G)\bigcap\Gamma_{\sigma,\sigma'}
		\right)
		\backslash\Delta(G).
		\]
		On the other hand, since $\Gamma_{\sigma,\sigma'}$ is discrete in $G^2$, $\Delta(G)$ is open in $\Gamma_{\sigma,\sigma'}\Delta(G)$ and the following bijection is again a homeomorphism (because it is an open continuous map)
		\[
		\left(
		\Delta(G)\bigcap\Gamma_{\sigma,\sigma'}
		\right)
		\backslash\Delta(G)
		\rightarrow
		\Gamma_{\sigma,\sigma'}\backslash\Gamma_{\sigma,\sigma'}\Delta(G).
		\]
		In particular, $(\Gamma_{\sigma}\bigcap\Gamma_{\sigma'})\backslash G$ is compact if and only if
		$\Gamma_{\sigma,\sigma'}\backslash\Gamma_{\sigma,\sigma'}\Delta(G)$ is compact, if and only if
		$\Gamma_{\sigma,\sigma'}\backslash\Gamma_{\sigma,\sigma'}\Delta(G)$ is closed in 
		$\Gamma_{\sigma,\sigma'}\backslash G^2$.

		Now if $\Gamma_{\sigma}$ and $\Gamma_{\sigma'}$ are commensurable, then $(\Gamma_{\sigma}\bigcap\Gamma_{\sigma'})\backslash G$ is compact, so $\Gamma_{\sigma,\sigma'}\backslash\Gamma_{\sigma,\sigma'}\Delta(G)$ is closed in 
		$\Gamma_{\sigma,\sigma'}\backslash G^2$, thus $H=\Delta(G)$.
		If they are not commensurable, then $(\Gamma_{\sigma}\bigcap\Gamma_{\sigma'})\backslash G$ is not compact, so $\Gamma_{\sigma,\sigma'}\backslash\Gamma_{\sigma,\sigma'}\Delta(G)$ is not closed in $G^2$, therefore we must have $H=G^2$.
	\end{proof}

	\begin{proposition}\label{closure of diagonal map}
		The closure of $\Gamma\Delta^{\mathcal{T}}(G)$ in $G^{\mathcal{T}}$ has the form $\Gamma H$, where $H$ is a product of diagonals $\prod_{i=1}^r\Delta^{\mathcal{T}_i}(G)$ with $\mathcal{T}_i$ as above. In particular, if each $\mathcal{T}_i$ contains only one element, then $\Gamma\Delta^{\mathcal{T}}(G)$ is dense in $G^{\mathcal{T}}$.
	\end{proposition}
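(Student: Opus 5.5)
As in the proof of Lemma \ref{closure of diagonal image depending on commensurability}, I would first apply Ratner's theorem \cite[Theorem 2]{Ratner1995} to the action of the subgroup $\Delta^{\mathcal{T}}(G)$ on $\Gamma\backslash G^{\mathcal{T}}$. This is legitimate because $\Delta^{\mathcal{T}}(G)\simeq G$ is generated by one-parameter adjoint unipotent subgroups. It gives $\overline{\Gamma\Delta^{\mathcal{T}}(G)}=\Gamma H$ for some closed subgroup $H\supset\Delta^{\mathcal{T}}(G)$. Moreover $H$ is normalized by $\Delta^{\mathcal{T}}(G)$; indeed one may take $H$ to be the identity component-type stabilizer group produced by Ratner. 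By Proposition \ref{product of diagonals}, $H=\prod_{j=1}^{s}\Delta^{\mathcal{T}'_j}(G)$ for pairwise disjoint nonempty $\mathcal{T}'_j$. Since $H\supset\Delta^{\mathcal{T}}(G)$, the $\mathcal{T}'_j$ must cover $\mathcal{T}$. It therefore remains to show that the partition $\{\mathcal{T}'_j\}$ coincides with the partition $\{\mathcal{T}_i\}$ of \eqref{partition of T}.

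To compare the two partitions, I would project to pairs. For $\sigma\neq\sigma'$ let $\mathrm{pr}_{\sigma,\sigma'}\colon G^{\mathcal{T}}\to G^{\{\sigma,\sigma'\}}$ be the projection. It is continuous and surjective with compact source quotient, and it maps $\Gamma$ onto $\Gamma_{\sigma,\sigma'}$. Since $\Gamma\backslash G^{\mathcal{T}}$ is compact, the image of the closed set $\Gamma H$ is closed, so
\[
\mathrm{pr}_{\sigma,\sigma'}\bigl(\overline{\Gamma\Delta^{\mathcal{T}}(G)}\bigr)=\overline{\Gamma_{\sigma,\sigma'}\Delta(G)}.
\]
A cleaner way to see this is that the image of a closure under a proper map of the quotients is the closure of the image. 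The left side equals $\Gamma_{\sigma,\sigma'}\,\mathrm{pr}_{\sigma,\sigma'}(H)$. If $\sigma,\sigma'$ lie in the same $\mathcal{T}'_j$, then $\mathrm{pr}_{\sigma,\sigma'}(H)=\Delta(G)$; otherwise it is $G^2$.

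Now apply Lemma \ref{closure of diagonal image depending on commensurability}. If $\Gamma_\sigma,\Gamma_{\sigma'}$ are commensurable, the closure is $\Gamma_{\sigma,\sigma'}\Delta(G)\neq G^2$. The inequality holds because this set is a countable union of cosets of the closed subgroup $\Delta(G)$, which has empty interior in $G^2$; by Baire it cannot equal $G^2$. So $\mathrm{pr}_{\sigma,\sigma'}(H)\neq G^2$, and $\sigma,\sigma'$ lie in the same block. Conversely, if they are not commensurable, the closure is $G^2$. Then $\Gamma_{\sigma,\sigma'}\Delta(G)=G^2$ is impossible by the same Baire argument, so $\sigma,\sigma'$ lie in different blocks. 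Hence $\mathcal{T}'_j$-equivalence is exactly commensurability, and $H=\prod_i\Delta^{\mathcal{T}_i}(G)$.

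The "in particular" follows immediately. If every $\mathcal{T}_i$ is a singleton, then $H=G^{\mathcal{T}}$, so the closure is all of $G^{\mathcal{T}}$.

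The main obstacle I expect is the projection step: justifying that projecting the closure yields the closure of the projection, which needs closedness of the image via compactness of $\Gamma\backslash G^{\mathcal{T}}$. A second delicate point is ensuring that Ratner's $H$ can be chosen to contain and be normalized by $\Delta^{\mathcal{T}}(G)$. I would cite the form of \cite[Theorem 2]{Ratner1995} used in Lemma \ref{closure of diagonal image depending on commensurability}, exactly as done there.
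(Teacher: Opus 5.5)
Your proposal is correct and follows the paper's proof: Ratner's theorem and Proposition \ref{product of diagonals} give $\Gamma H$ with $H$ a product of diagonals over some partition of $\mathcal{T}$, and projecting to pairs and applying Lemma \ref{closure of diagonal image depending on commensurability} identifies that partition with the commensurability partition. You also justify two steps the paper treats as clear: the projection of the closed set $\Gamma H$ is closed (by compactness of $\Gamma\backslash G^{\mathcal{T}}$), and $\Gamma_{\sigma,\sigma'}\Delta(G)\neq G^2$ (by Baire). Note too that normalization of $H$ by $\Delta^{\mathcal{T}}(G)$ is automatic once $H\supseteq\Delta^{\mathcal{T}}(G)$, so no special choice of Ratner's group is needed.
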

	\begin{proof}
		As in the previous lemma, the closure has the form $\Gamma H$ for some closed subgroup $H$ of $G^{\mathcal{T}}$ of the form
		$H=\prod_{i=1}^{r'}\Delta^{\mathcal{T}_i'}(G)$
		for some \textit{partition} of $\mathcal{T}$:
		\[
		\mathcal{T}=\bigsqcup_{i=1}^{r'}\mathcal{T}_i'.
		\]

		Now take any two elements $\sigma,\sigma'\in\mathcal{T}$ and consider the projection onto the components of $\sigma$ and $\sigma'$:
		\[
		\mathrm{pr}_{\sigma}
		\times
		\mathrm{pr}_{\sigma'}
		\colon
		G^{\mathcal{T}}
		\rightarrow
		G^2.
		\]
		Clearly, the image of $\Gamma H$ under this projection map is equal to the closure of $\Gamma_{\sigma,\sigma'}\Delta(G)$.
		It follows from Lemma \ref{closure of diagonal image depending on commensurability} that $\sigma,\sigma'\in\mathcal{T}_i$ if and only if the closure of $\Gamma_{\sigma,\sigma'}\Delta(G)$ is \textit{not} equal to $G^2$, if and only if $\Gamma_{\sigma}$ and $\Gamma_{\sigma'}$ are commensurable, if and only if $\sigma,\sigma'\in\mathcal{T}_{i'}$ for some $i'=1,\cdots,r$.
		Therefore, each $\mathcal{T}_i'$ is contained in some $\mathcal{T}_{i'}$, and hence the two partitions $\mathcal{T}=\bigsqcup_{i=1}^r\mathcal{T}_i$ and $\mathcal{T}=\bigsqcup_{i=1}^{r'}\mathcal{T}_i'$ coincide. This finishes the proof.
	\end{proof}

	\subsection{$G$ a product of simple groups}\label{G not simple}
	For later use, we also need to treat the case where $G$ is not simple, but rather a product of simple groups.
	In this subsection we write
	\[
	G=\prod_{j=1}^t G^{(j)},
	\]
	where each $G^{(j)}$ is a simple, non-commutative $p$-adic Lie group generated by one-parameter adjoint unipotent subgroups, as in the previous subsection (in the sense of \cite{Ratner1995}).

	\begin{proposition}\label{H is a product of diagonals}
		A subgroup $H$ of $G^\Tcal$ is normalized by $\Delta^\Tcal(G)$ if and only if it is of the form
		\[
		H=\prod_{j=1}^t\prod_{i=1}^{r_j}\Delta^{\Tcal_i^{(j)}}(G^{(j)}),
		\]
		where, for each $j=1,\cdots,t$, the sets $\Tcal_1^{(j)},\cdots,\Tcal_{r_j}^{(j)}$ are pairwise disjoint, non-empty subsets of $\Tcal$.
	\end{proposition}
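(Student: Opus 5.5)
The plan is to reduce to the simple case, Proposition \ref{product of diagonals}, one factor $G^{(j)}$ at a time, and then use a commutator argument to show that $H$ splits as a product over $j$.

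The ``if'' direction is immediate. We have $G^\Tcal=\prod_{j=1}^t (G^{(j)})^\Tcal$, and the factors for different $j$ commute. Hence $\Delta^\Tcal(G)=\prod_j\Delta^\Tcal(G^{(j)})$, and $\Delta^\Tcal(G^{(j')})$ centralizes $(G^{(j)})^\Tcal$ for $j'\neq j$. Each $\prod_i\Delta^{\Tcal_i^{(j)}}(G^{(j)})$ is normalized by $\Delta^\Tcal(G^{(j)})$ by Proposition \ref{product of diagonals}, so the full product is normalized by $\Delta^\Tcal(G)$.

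For the converse, let $H$ be normalized by $\Delta^\Tcal(G)$. Write $p_j\colon G^\Tcal\to (G^{(j)})^\Tcal$ for the projection, and set
\[
H^{(j)}:=H\cap (G^{(j)})^\Tcal,\qquad D_j:=p_j(H).
\]
The factors indexed by $j'\neq j$ commute with $(G^{(j)})^\Tcal$, so both $H^{(j)}$ and $D_j$ are normalized by $\Delta^\Tcal(G^{(j)})$. By Proposition \ref{product of diagonals}, each of them is a product of diagonals in $(G^{(j)})^\Tcal$. Moreover, $H^{(j)}$ is normal in $H$, and $p_j$ restricts to the identity on $H^{(j)}$, so $H^{(j)}=p_j(H^{(j)})$ is normal in $D_j$.

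The key step is to show that $H^{(j)}=D_j$. Write $D_j=\prod_{i}\Delta^{\Tcal_i}(G^{(j)})\cong (G^{(j)})^{r}$, a finite product of copies of a simple non-commutative group. A standard commutator argument shows that every normal subgroup of such a product is a sub-product of factors. Indeed, if $N$ is normal and some $n\in N$ has a nontrivial $i$-th coordinate, then commutators of $n$ with elements of the $i$-th factor lie in $N$ and generate the whole $i$-th factor, by simplicity. So $H^{(j)}$ is the product of some of the factors, and $D_j/H^{(j)}$ is isomorphic to the product $Q$ of the remaining factors.

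Now take $x\in D_j$ and choose $h\in H$ with $p_j(h)=x$. For $g\in\Delta^\Tcal(G^{(j)})$, the element $g$ commutes with the other components of $h$, so
\[
[g,h]=[g,x]\in H\cap (G^{(j)})^\Tcal=H^{(j)}.
\]
Thus the image $\bar x$ of $x$ in $Q$ commutes with the image of $\Delta^\Tcal(G^{(j)})\cap D_j$. That image is the full diagonal in each remaining factor, so its centralizer in $Q$ is the centre of $Q$. The centre of $Q$ is trivial because each $G^{(j)}$ is simple and non-commutative. Hence $\bar x=1$, that is, $x\in H^{(j)}$.

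It follows that
\[
H\subset\prod_j p_j(H)=\prod_j H^{(j)}\subset H,
\]
so $H=\prod_j H^{(j)}$. Each $H^{(j)}$ is a product of diagonals $\prod_{i=1}^{r_j}\Delta^{\Tcal_i^{(j)}}(G^{(j)})$, which gives the claimed form.

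The main obstacle is this splitting step. A priori $H$ could be a ``twisted'' Goursat-type subgroup of $\prod_j D_j$, and it is the combination of normality of $H^{(j)}$ in $D_j$ with the triviality of centralizers of diagonals that rules this out. The one point to check carefully is that the intersection $\Delta^\Tcal(G^{(j)})\cap D_j$ maps onto the diagonal of each factor of $Q$. This holds because the sets $\Tcal_i$ underlying $D_j$ are disjoint, so the diagonal of $(G^{(j)})^\Tcal$ contains the diagonal of each $\Delta^{\Tcal_i}(G^{(j)})$.
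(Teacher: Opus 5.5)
Your overall strategy is sound: apply Proposition~\ref{product of diagonals} to $H^{(j)}$ and $D_j=p_j(H)$, then show $p_j(H)=H^{(j)}$. But the key splitting step fails as written.

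\textbf{The gap.} To make $\bar g$ meaningful in $Q=D_j/H^{(j)}$, you restrict to $g\in\Delta^\Tcal(G^{(j)})\cap D_j$. A full-diagonal element $(g_0,\dots,g_0)$ lies in $D_j=\prod_i\Delta^{\Tcal_i}(G^{(j)})$ only if its coordinates outside $\bigcup_i\Tcal_i$ are trivial. So whenever the blocks of $D_j$ do not cover $\Tcal$, this intersection is $\{1\}$. For example, take $\Tcal=\{\sigma_1,\sigma_2\}$ and $D_j=\Delta^{\{\sigma_1\}}(G^{(j)})$. Then ``the centralizer of the image'' is all of $Q$, and you learn nothing about $\bar x$.

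Your justification, that the diagonal of $(G^{(j)})^\Tcal$ contains the diagonal of each $\Delta^{\Tcal_i}(G^{(j)})$, is false. The element $\Delta^{\Tcal_i}(g_0)$ has trivial coordinates off $\Tcal_i$, so it is not of the form $(g,\dots,g)$ unless $\Tcal_i=\Tcal$ or $g_0=1$. This non-covering case cannot be dismissed, since nothing forces $p_j(H)$ to have full support. Your argument is only valid when $\bigsqcup_i\Tcal_i=\Tcal$.

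\textbf{The fix.} Do not intersect with $D_j$. Since $\Delta^\Tcal(G^{(j)})$ normalizes both $D_j$ and $H^{(j)}$, it acts on $Q$ by conjugation. On each remaining factor $\Delta^{\Tcal_i}(G^{(j)})\cong G^{(j)}$, the element $\Delta^\Tcal(g_0)$ acts by the inner automorphism of $g_0$. The condition $[g,x]\in H^{(j)}$ says $\bar x$ is fixed by all of these automorphisms. Hence every coordinate of $\bar x$ is central in $G^{(j)}$, and therefore trivial. Equivalently, replace $\Delta^\Tcal(g_0)$ by $\prod_i\Delta^{\Tcal_i}(g_0)\in D_j$, which has the same commutator with every $x\in D_j$.

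\textbf{Comparison with the paper.} Once this is seen, the detour through $D_j$ and the classification of normal subgroups of a power of $G^{(j)}$ becomes unnecessary. The paper works coordinatewise with $[g,h_j]\in H^{(j)}=\prod_i\Delta^{\Tcal_i^{(j)}}(G^{(j)})$ for $g$ in the full diagonal:
\begin{itemize}
\item equality of coordinates within a block forces $h_{j,\sigma}^{-1}h_{j,\sigma'}$ to commute with all of $G^{(j)}$;
\item triviality off the blocks forces $h_{j,\sigma}$ to be central.
\end{itemize}
Hence $h_j\in H^{(j)}$ directly. Your ``if'' direction is fine; the paper omits it.
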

	\begin{proof}
		For $j=1,\cdots,t$, set
		\[
		H^{(j)}=H\cap (G^{(j)})^\Tcal,
		\]
		a subgroup of $(G^{(j)})^\Tcal$.
		Since $H$ is normalized by $\Delta^\Tcal(G)$, the subgroup $H^{(j)}$ is normalized by $\Delta^\Tcal(G^{(j)})$.
		Thus, by Proposition \ref{product of diagonals}, we have that $H^{(j)}$ is a product of diagonals in $(G^{(j)})^\Tcal$:
		\[
		H^{(j)}=\prod_{i=1}^{r_j}\Delta^{\Tcal_i^{(j)}}(G^{(j)}).
		\]
		Here $\Tcal_1^{(j)},\cdots,\Tcal_{r_j}^{(j)}$ are pairwise disjoint, non-empty subsets of $\Tcal$.
		
		Set $\widetilde{H}=\prod_{j=1}^t H^{(j)}$. For any $h\in\widetilde{H}$, write $h=h_1\cdots h_t$ with $h_j\in (G^{(j)})^\Tcal$.
		Fix $j\in\{1,\cdots,t\}$. Then, for any $g_j\in\Delta^\Tcal(G^{(j)})$, we have
		\[
		g_jhg_j^{-1}
		=
		h_1\cdots h_{j-1}(g_jh_jg_j^{-1})h_{j+1}\cdots h_t\in H.
		\]
		Thus $g_jhg_j^{-1}h^{-1}=g_jh_jg_j^{-1}h_j^{-1}\in H\cap (G^{(j)})^\Tcal=H^{(j)}$ for any $g_j\in\Delta^\Tcal(G^{(j)})$.
		We claim that $h_j\in H^{(j)}$. This implies $H=\widetilde{H}$ and completes the proof.
		
		To prove the claim, write $h_j=(h_{j,\sigma})_{\sigma\in\Tcal}$ with $h_{j,\sigma}\in G^{(j)}$, and consider two cases:
		\begin{enumerate}
			\item 
			Fix $\sigma,\sigma'\in\Tcal_i^{(j)}$. Then for any $g\in G^{(j)}$, we have
			\[
			gh_{j,\sigma}g^{-1}h_{j,\sigma}^{-1}
			=
			gh_{j,\sigma'}g^{-1}h_{j,\sigma'}^{-1}.
			\]
			In particular, $h_{j,\sigma}^{-1}h_{j,\sigma'}$ commutes with every $g\in G^{(j)}$. Since $G^{(j)}$ is simple, we must have
			\[
			h_{j,\sigma}=h_{j,\sigma'}.
			\]

			\item 
			Fix $\sigma\notin\bigsqcup_{i=1}^{r_j}\Tcal_i^{(j)}$. Then for any $g\in G^{(j)}$, we have
			\[
			gh_{j,\sigma}g^{-1}h_{j,\sigma}^{-1}=1,
			\]
			and hence
			\[
			h_{j,\sigma}=1.
			\]
		\end{enumerate}
		From these two cases, we deduce that $h_j=(h_{j,\sigma})_{\sigma\in\Tcal}$ lies in $H^{(j)}=\prod_{i=1}^{r_j}\Delta^{\Tcal_i^{(j)}}(G^{(j)})$.
	\end{proof}

	As in Lemma \ref{closure of diagonal image depending on commensurability}, we have the following.
	\begin{lemma}
		Suppose $\Tcal=\{\sigma,\sigma'\}$. Then $\Gamma_{\sigma,\sigma'}\Delta(G)$ is closed in $G^2$ if and only if $\Gamma_{\sigma}$ and $\Gamma_{\sigma'}$ are commensurable.
	\end{lemma}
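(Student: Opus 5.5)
The proof follows the second half of Lemma \ref{closure of diagonal image depending on commensurability}, which does not use Ratner's theorem at all. Here $G=\prod_j G^{(j)}$, $\Gamma_\sigma,\Gamma_{\sigma'}$ are discrete cocompact subgroups of $G$, and $\Gamma_{\sigma,\sigma'}=\Gamma_\sigma\times\Gamma_{\sigma'}$ is discrete and cocompact in $G^2$. The plan has three steps.

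First, identify $\Gamma_{\sigma,\sigma'}\backslash\Gamma_{\sigma,\sigma'}\Delta(G)$ with a quotient of $G$. The intersection $\Delta(G)\cap\Gamma_{\sigma,\sigma'}$ equals $\Delta(\Gamma_\sigma\cap\Gamma_{\sigma'})$. So the natural map
\[
(\Gamma_\sigma\cap\Gamma_{\sigma'})\backslash G\to\Gamma_{\sigma,\sigma'}\backslash\Gamma_{\sigma,\sigma'}\Delta(G)\subset\Gamma_{\sigma,\sigma'}\backslash G^2
\]
is a continuous bijection onto its image. Since $\Gamma_{\sigma,\sigma'}$ is discrete, the orbit map is open onto its image and this map is a homeomorphism, exactly as in the earlier lemma. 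Closedness of $\Gamma_{\sigma,\sigma'}\Delta(G)$ in $G^2$ is equivalent to closedness of its image in the compact space $\Gamma_{\sigma,\sigma'}\backslash G^2$, which is equivalent to compactness of that image.

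Second, reduce compactness to commensurability. If $\Gamma_\sigma$ and $\Gamma_{\sigma'}$ are commensurable, then $\Gamma_\sigma\cap\Gamma_{\sigma'}$ has finite index in the cocompact $\Gamma_\sigma$. Hence $(\Gamma_\sigma\cap\Gamma_{\sigma'})\backslash G$ is compact, and by the first step $\Gamma_{\sigma,\sigma'}\Delta(G)$ is closed.

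Third, prove the converse: compactness of $(\Gamma_\sigma\cap\Gamma_{\sigma'})\backslash G$ forces commensurability. If $K\subset G$ is compact with $G=(\Gamma_\sigma\cap\Gamma_{\sigma'})K$, then
\[
\Gamma_\sigma=(\Gamma_\sigma\cap\Gamma_{\sigma'})(K\cap(\Gamma_\sigma\cap\Gamma_{\sigma'})^{-1}\Gamma_\sigma)=(\Gamma_\sigma\cap\Gamma_{\sigma'})(K\cap\Gamma_\sigma).
\]
The set $K\cap\Gamma_\sigma$ is finite because $\Gamma_\sigma$ is discrete. So the index of $\Gamma_\sigma\cap\Gamma_{\sigma'}$ in $\Gamma_\sigma$ is finite, and symmetrically in $\Gamma_{\sigma'}$. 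Hence the two groups are commensurable.

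The main obstacle is the homeomorphism in the first step. One must check that $\Delta(G)$ is open in $\Gamma_{\sigma,\sigma'}\Delta(G)$, or equivalently that the orbit map of $G$ on the discrete-quotient orbit is open. This works for locally compact, $\sigma$-compact groups by the standard Baire category argument. The product structure of $G$ plays no role, because the argument only uses discreteness and cocompactness of the $\Gamma$'s.
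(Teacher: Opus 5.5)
Your proposal is correct and is essentially the paper's argument: the paper proves this lemma by rerunning the second half of Lemma \ref{closure of diagonal image depending on commensurability}, which identifies $(\Gamma_\sigma\cap\Gamma_{\sigma'})\backslash G$ with the orbit $\Gamma_{\sigma,\sigma'}\backslash\Gamma_{\sigma,\sigma'}\Delta(G)$ and uses compact $\Leftrightarrow$ closed; your third step just supplies the finite-index argument that the paper leaves implicit. One small correction: the inverse of that bijection is continuous (by the Baire argument) only when the orbit is locally compact, e.g.\ closed, so you should state the homeomorphism under that hypothesis rather than unconditionally; this is harmless, since closed is exactly the direction in which you use it.
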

	\begin{proof}
		The proof is exactly the same as in Lemma \ref{closure of diagonal image depending on commensurability}, except that we do not know whether the closure of $\Gamma_{\sigma,\sigma'}\Delta(G)$ is equal to $G^2$ when $\Gamma_{\sigma}$ and $\Gamma_{\sigma'}$ are \emph{not} commensurable.
	\end{proof}

	\begin{corollary}\label{Gamma_sigma and Gamma_sigma' comm or not, not simple}
		Suppose $\Tcal=\{\sigma,\sigma'\}$ and that for any $j=1,\cdots,t$, the subgroup $\Gamma G^{(j)}$ is dense in $G$. Then the closure of $\Gamma_{\sigma,\sigma'}\Delta(G)$ in $G^2$ is
		\[
		\begin{cases*}
			\Gamma_{\sigma,\sigma'}\Delta(G),
			&
			if $\Gamma_{\sigma}$ and $\Gamma_{\sigma'}$ are commensurable;
			\\
			G^2,
			&
			if $\Gamma_{\sigma}$ and $\Gamma_{\sigma'}$ are not commensurable.
		\end{cases*}
		\]
	\end{corollary}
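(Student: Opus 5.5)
The plan follows the proof of Lemma \ref{closure of diagonal image depending on commensurability}, with Proposition \ref{H is a product of diagonals} replacing Proposition \ref{product of diagonals} and the density hypothesis used to rule out the intermediate cases. First, each $G^{(j)}$ is generated by one-parameter adjoint unipotent subgroups, so $G$ and $\Delta(G)$ are as well. Ratner's theorem \cite[Theorem 2]{Ratner1995} therefore shows that the closure of $\Gamma_{\sigma,\sigma'}\Delta(G)$ in $G^2$ has the form $\Gamma_{\sigma,\sigma'}H$ for a closed subgroup $H\supset\Delta(G)$. This $H$ is normalized by $\Delta(G)$. By Proposition \ref{H is a product of diagonals}, since $H$ contains $\Delta(G^{(j)})$ for every $j$, we get
\[
H=\prod_{j=1}^t H_j,
\]
where each $H_j$ is either $\Delta(G^{(j)})$ or $(G^{(j)})^2$. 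In the commensurable case, the preceding lemma says that $\Gamma_{\sigma,\sigma'}\Delta(G)$ is already closed, so the closure is $\Gamma_{\sigma,\sigma'}\Delta(G)$.

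Next, suppose $\Gamma_\sigma$ and $\Gamma_{\sigma'}$ are not commensurable. The preceding lemma gives $H\neq\Delta(G)$, so at least one factor $H_{j_0}$ equals $(G^{(j_0)})^2$. I would use the density hypothesis to propagate this to every $j$. The key observation is that $\Gamma_{\sigma,\sigma'}H$ is closed and contains $\Gamma_{\sigma,\sigma'}\cdot(\{1\}\times G^{(j_0)})$, and it is stable under right multiplication by $H$. Left multiplication by $\Gamma_{\sigma,\sigma'}$ preserves the set, and right multiplication by $H\supset 1\times G^{(j_0)}$ also preserves it. Consequently the set contains $\Gamma_{\sigma,\sigma'}(1\times G^{(j_0)})\Delta(G)$, which in turn contains $\{1\}\times\Gamma_{\sigma'}G^{(j_0)}$. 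Here I read the hypothesis as saying that $\Gamma_{\sigma'}G^{(j_0)}$ is dense in $G$; I would check whether it is meant factorwise or for $\Gamma=\Gamma_\sigma\times\Gamma_{\sigma'}$ inside $G^2$, but either reading should give density of $\Gamma_{\sigma'}G^{(j_0)}$ after projecting. Taking closures, the set contains $\{1\}\times G$. Multiplying on the right by $\Delta(G)$ then gives all of $G^2$.

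The main obstacle is the propagation step. Right multiplication by $H$ preserves $\Gamma_{\sigma,\sigma'}H$, but $\Gamma_{\sigma,\sigma'}$ acts on the left, so the cleaner route is to note that $\Gamma_{\sigma,\sigma'}H$ is closed and contains $\Gamma_{\sigma,\sigma'}(1\times G^{(j_0)})$. When $G^{(j_0)}$ is normal in $G$, this set equals $(1\times G^{(j_0)})\Gamma_{\sigma,\sigma'}$ up to reordering, and its closure contains $\{1\}\times\overline{G^{(j_0)}\Gamma_{\sigma'}}=\{1\}\times G$. Since $G^{(j_0)}$ is normal, $G^{(j_0)}\Gamma_{\sigma'}=\Gamma_{\sigma'}G^{(j_0)}$, so the density hypothesis applies directly and the argument closes.
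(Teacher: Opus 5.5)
Your proposal is correct and follows the paper's proof: Ratner's theorem gives the closure as $\Gamma_{\sigma,\sigma'}H$, and Proposition~\ref{H is a product of diagonals} makes each factor of $H$ either diagonal or full. The preceding lemma then settles the commensurable case and otherwise forces some factor $(G^{(j_0)})^2\subset H$, after which density yields $G^2$. The only difference is cosmetic. The paper uses density of $\Gamma_{\sigma,\sigma'}(G^{(j_0)})^2$ in $G^2$ directly, whereas you use density of $\Gamma_{\sigma'}G^{(j_0)}$ in one factor plus right multiplication by $\Delta(G)$. Your left/right worry is unnecessary, since $\Gamma_{\sigma,\sigma'}(\{1\}\times G^{(j_0)})=\Gamma_\sigma\times\Gamma_{\sigma'}G^{(j_0)}$ holds directly.
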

	\begin{proof}
		It suffices to treat the case where $\Gamma_{\sigma}$ and $\Gamma_{\sigma'}$ are not commensurable.
		Since $\Gamma G^{(j)}$ is dense in $G$, so are $\Gamma_{\sigma}G^{(j)}$ and $\Gamma_{\sigma'}G^{(j)}$.
		By the proof of Lemma \ref{closure of diagonal image depending on commensurability}, the closure of $\Gamma_{\sigma,\sigma'}\Delta(G)$ in $G^2$ is of the form $\Gamma_{\sigma,\sigma'}H$ for some closed subgroup $H\subset G^2$ containing $\Delta(G)$ and normalized by $\Delta(G)$.
		Thus, by Proposition \ref{H is a product of diagonals}, $H$ is of the form
		\[
		H=\prod_{j=1}^t\prod_{i=1}^{r_j}\Delta^{\Tcal_i^{(j)}}(G^{(j)}).
		\]
		Since $H\supset\Delta(G)$, we have $\Tcal=\bigsqcup_{i=1}^{r_j}\Tcal_i^{(j)}$ for each $j$.
		Since $\Gamma_{\sigma}$ and $\Gamma_{\sigma'}$ are not commensurable, we have $H\neq\Delta(G)$. Hence there exists some $j_0\in\{1,\cdots,t\}$ such that $r_{j_0}=2$. In particular, $H$ contains $(G^{(j_0)})^{\Tcal}$.
		However, by assumption, $\Gamma_{\sigma,\sigma'}(G^{(j_0)})^{\Tcal}$ is dense in $G^2$, and therefore $H=G^2$. This proves the corollary.
	\end{proof}

	Then, as in Proposition \ref{closure of diagonal map}, we have the following.
	\begin{proposition}\label{closure of diagonal map, not simple}
		Suppose that for any $j=1,\cdots,t$, the subgroup $\Gamma G^{(j)}$ is dense in $G$.
		Then the closure of $\Gamma\Delta^\Tcal(G)$ in $G^\Tcal$ has the form $\Gamma H$, where $H$ is a closed subgroup of $G$ given by
		\[
		H=\prod_{i=1}^r\Delta^{\Tcal_i}(G)
		\]
		for the partition $\Tcal=\bigsqcup_{i=1}^r\Tcal_i$ as in \eqref{partition of T}.
		In particular, if each $\Tcal_i$ contains only one element, then $\Gamma\Delta^{\Tcal}(G)$ is dense in $G^\Tcal$.
	\end{proposition}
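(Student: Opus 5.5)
The plan mirrors the proof of Proposition \ref{closure of diagonal map}, with Corollary \ref{Gamma_sigma and Gamma_sigma' comm or not, not simple} replacing Lemma \ref{closure of diagonal image depending on commensurability}.

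\emph{Form of the closure.} Since each $G^{(j)}$ is generated by one-parameter adjoint unipotent subgroups, so are $G$ and $\Delta^\Tcal(G)$. Ratner's theorem (\cite[Theorem 2]{Ratner1995}) then shows that the closure of $\Gamma\Delta^\Tcal(G)$ is $\Gamma H$ for some closed subgroup $H\supset\Delta^\Tcal(G)$ normalized by $\Delta^\Tcal(G)$. By Proposition \ref{H is a product of diagonals} we can write
\[
H=\prod_{j=1}^t\prod_{i=1}^{r_j}\Delta^{\Tcal_i^{(j)}}(G^{(j)}).
\]
Because $H$ contains $\Delta^\Tcal(G)$, each family $(\Tcal_i^{(j)})_i$ is in fact a partition of $\Tcal$. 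The remaining task is to show that, for every $j$, this partition coincides with the commensurability partition \eqref{partition of T}.

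\emph{Pairwise comparison.} Fix $\sigma\neq\sigma'$ in $\Tcal$ and apply $\mathrm{pr}_\sigma\times\mathrm{pr}_{\sigma'}$. The image of $\Gamma H$ is the closure of $\Gamma_{\sigma,\sigma'}\Delta(G)$. This uses compactness: $\Gamma\backslash G^\Tcal$ is compact, so the closure of $\Gamma\backslash\Gamma\Delta^\Tcal(G)$ is compact, and its image is closed. Hence the projection of the closure equals the closure of the projection. The hypothesis ``$\Gamma G^{(j)}$ dense in $G$'' is understood for the relevant $\Gamma_\sigma$, exactly as it is used in the proof of Corollary \ref{Gamma_sigma and Gamma_sigma' comm or not, not simple}. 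Under this hypothesis, that corollary says the projected closure is $\Gamma_{\sigma,\sigma'}\Delta(G)$ when $\Gamma_\sigma,\Gamma_{\sigma'}$ are commensurable, and $G^2$ otherwise.

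\emph{Commensurable pairs.} If $\Gamma_\sigma,\Gamma_{\sigma'}$ are commensurable, the projection $\Gamma_{\sigma,\sigma'}\,\mathrm{pr}_{\sigma,\sigma'}(H)$ equals $\Gamma_{\sigma,\sigma'}\Delta(G)$. Since $\Delta(G)$ is open in $\Gamma_{\sigma,\sigma'}\Delta(G)$, the connected-by-generation group $\mathrm{pr}_{\sigma,\sigma'}(H)$ must be $\Delta(G)$. I expect to argue this by noting that $\mathrm{pr}(H)$ is generated by unipotents, each of which lands in the identity-open piece; alternatively, a countability/Baire argument works, since $\Gamma_{\sigma,\sigma'}$ is countable. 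It follows that $\sigma,\sigma'$ lie in the same $\Tcal_i^{(j)}$ for every $j$.

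\emph{Non-commensurable pairs.} Here the projection is $G^2$. If $\sigma,\sigma'$ lay in the same block for some $j_0$, then $\mathrm{pr}_{\sigma,\sigma'}(H)$ would be the proper closed subgroup with $G^{(j_0)}$-part diagonal. We then need $\Gamma_{\sigma,\sigma'}\,\mathrm{pr}(H)$ to be non-dense, i.e.\ to rule out a countable union of cosets of this subgroup being all of $G^2$. This is the main obstacle, because $\mathrm{pr}(H)$ need not have finite covolume a priori. I would handle it by Baire category: $G^2$ is a countable union of the closed sets $\gamma\,\mathrm{pr}(H)$, so some coset has nonempty interior. Then $\mathrm{pr}(H)$ is open, hence contains the identity component $\Delta$-sense generators, i.e.\ all unipotents. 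Since $G^2$ is generated by unipotents, $\mathrm{pr}(H)=G^2$, which contradicts the diagonal $G^{(j_0)}$-part.

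\emph{Conclusion.} For each $j$, the partition $(\Tcal_i^{(j)})_i$ is exactly $(\Tcal_i)_i$. Therefore $H=\prod_j\prod_i\Delta^{\Tcal_i}(G^{(j)})=\prod_i\Delta^{\Tcal_i}(G)$. When every $\Tcal_i$ is a singleton, $H=G^\Tcal$, and the density statement follows.
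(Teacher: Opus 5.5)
Your proposal follows the paper's route: Ratner gives the closure as $\Gamma H$, Proposition \ref{H is a product of diagonals} puts $H$ in product-of-diagonals form, and pairwise projection combined with Corollary \ref{Gamma_sigma and Gamma_sigma' comm or not, not simple} identifies each partition $(\Tcal_i^{(j)})_i$ with \eqref{partition of T}, with more detail than the paper's one-line proof supplies. One small fix is needed: these $p$-adic groups are totally disconnected, so there is no identity component, and an open subgroup need not contain all unipotents. In both cases, conclude instead by Baire category, using that $\Delta(G^{(j)})$ has empty interior in $(G^{(j)})^2$ because $G^{(j)}$ is non-discrete. This rules out an open $\mathrm{pr}_{\sigma,\sigma'}(H)$ with diagonal $G^{(j_0)}$-part in the non-commensurable case, and forces $\mathrm{pr}_{\sigma,\sigma'}(H)=\Delta(G)$ in the commensurable case.
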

	\begin{proof}
		The proof is the same as that of Proposition \ref{closure of diagonal map}, taking Corollary \ref{Gamma_sigma and Gamma_sigma' comm or not, not simple} into account.
	\end{proof}

	\section{Hecke operators}\label{Hecke operators}
	Let $\bfG$ be an algebraic group over $\Q$, and let $\bcS\subset\bcS'$ be finite sets of places of $\Q$. Let $U^{\bcS'}$ be a compact open subgroup of $\bfG(\mathbb{A}^{\bcS'})$, and let $\Gamma\subset\bfG(\Q)$ be a subgroup of the form $\Gamma=\Gbf(\Q)\cap V$ for some compact open subgroup $V\subset \bfG(\Q_{\bcS})$.

	For any $a\in\bfG(\mathbb{A}^{\bcS})$, the \emph{Hecke correspondence} associated with $a$ is the diagram
	\[
	\begin{tikzcd}
		&
		\Gamma
		\backslash
		\bfG(\mathbb{A}^{\bcS})/(U^{\bcS'}\bigcap aU^{\bcS'} a^{-1})
		\arrow[ld,"\pi_1"']
		\arrow[rd,"\pi_2\colon g\mapsto ga"]
		&
		\\
		\Gamma
		\backslash
		\bfG(\mathbb{A}^{\bcS})/U^{\bcS'}
		&
		&
		\Gamma
		\backslash
		\bfG(\mathbb{A}^{\bcS})/U^{\bcS'}
	\end{tikzcd}
	\]
	Here the left arrow is the natural projection map, and the right arrow is the composition of the natural projection with right-multiplication by $a$.
	
	\begin{definition}
		Let $a\in\bfG(\mathbb{A}^{\bcS})$. The \emph{Hecke operator $T_a$} attached to $a$ is defined as follows: for any $x\in\Gamma\backslash\bfG(\mathbb{A}^{\bcS})/U^{\bcS'}$,
		\[
		T_a(x)
		=
		\pi_2\bigl(\pi_1^{-1}(x)\bigr)
		\subset
		\Gamma\backslash\bfG(\mathbb{A}^{\bcS})/U^{\bcS'}.
		\]
		The degree $\deg(a)=\deg(T_a)$ of $T_a$ is
		\[
		\deg(a)
		=
		[U^{\bcS'}\colon U^{\bcS'}\cap aU^{\bcS'} a^{-1}].
		\]
	\end{definition}

	The Hecke operator also induces an operator on functions $f\in L^2\bigl(\Gamma\backslash\bfG(\mathbb{A}^{\bcS})/U^{\bcS'}\bigr)$ by
	\[
	T_a(f)(x)
	:=
	\frac{1}{\deg(a)}
	\sum_{y\in T_a(x)}
	f(y).
	\]
	This in turn gives rise to a measure $\mu_{a,x}$ on $\Gamma\backslash\bfG(\mathbb{A}^{\bcS})/U^{\bcS'}$, defined by the requirement that, for any such $f$,
	\[
	\int_{\Gamma\backslash\bfG(\mathbb{A}^{\bcS})/U^{\bcS'}}
	f\, d\mu_{a,x}
	:=	
	T_a(f)(x).
	\]

	\section{Joining of measures}\label{joining of measures}
	Let $\bfG_0,\bfG_1,\cdots,\bfG_r$ be connected, almost simple, and noncommutative algebraic groups over $\Q$ such that $\bfG_i(\R)$ is \emph{compact} and $\Gbf_i^1$ is simply connected for all $i=0,\cdots,r$. Assume that there is a finite set $\bcS$ of places of $\Q$ such that, for any \emph{finite} place $v\notin\bcS$, the groups $\bfG_i(\Q_v)$ are all mutually isomorphic ($i=0,1,\cdots,r$). In what follows, we fix isomorphisms
	\begin{equation}\label{G_i(A_f^S)=G_0(A_f^S)}
		\Gbf_0(\A^\Scal)\simeq\Gbf_i(\A^\Scal),
		\quad
		\forall
		i=1,\cdots,r.
	\end{equation}
	We then fix a finite set $\bcS'$ of places of $\Q$ containing $\bcS$ such that $\bfG_i(\Q_{\bcS'})$ is non-compact for all $i=0,1,\cdots,r$. In particular, each $\bfG_i^1$ satisfies the strong approximation property with respect to $\bcS'$.

	We fix a compact open subgroup $U_0^{\bcS'}\subset\bfG_0(\mathbb{A}_f^{\bcS'})$ and denote by $U_i^{\bcS'}$ its image under the isomorphism $\bfG_0(\mathbb{A}_f^{\bcS'})\simeq\bfG_i(\mathbb{A}_f^{\bcS'})$. We set $\bfG=\prod_{i=1}^r\bfG_i$ and $\Gbf^1=\prod_{i=1}^r\Gbf_i^1$, and we write $U_0^{\Scal',1}=U_0^{\Scal'}\bigcap\Gbf_0^1(\A_f^{\Scal'})$ (similarly for $U_i^{\Scal',1}$). We put
	\begin{align*}
		\Gamma_i
		&=\bfG_i(\Q)\bigcap U_i^{\bcS'},
		\quad
		\Gamma_i^1=\Gamma_i\bigcap\Gbf_i^1(\Q),
		\quad
		\Gamma=\prod_{i=1}^r\Gamma_i,
		\quad
		\Gamma^1=\Gamma\bigcap\Gbf^1(\Q);
		\\
		G_i
		&=\bfG_i^1(\Q_{{\bcS'}\backslash\bcS}),
		\quad
		G=\prod_{i=1}^rG_i;
		\\
		\bfX_i
		&=
		\bfG_i^1(\Q)
		\backslash\bfG_i^1(\mathbb{A}_f^{\bcS})/U_i^{\bcS',1}
		=
		\Gamma_i^1
		\backslash
		G_i,
		\quad
		\bfX=\prod_{i=1}^r\bfX_i.
	\end{align*}
	We assume in the following that
	\begin{equation}\label{G vs G^1 for X}
		\Xbf_i
		=
		Z_{\Gbf_i}(\Q_{\Scal'\backslash\Scal})\Gamma_i
		\backslash
		\Gbf_i(\Q_{\Scal'\backslash\Scal})
		=
		Z_{\Gbf_i}(\A_f^\Scal)\Gbf_i(\Q)
		\backslash
		\Gbf_i(\A_f^\Scal)/U^{\Scal'},
		\quad
		\forall
		i=0,\cdots,r.
	\end{equation}
	Here the second identity holds provided that we assume
	\[
	\Gbf_i(\A_f^{\Scal'})
	=
	Z_{\Gbf_i}(\A_f^{\Scal'})\Gbf_i(\Q)
	\Gbf_i^1(\A_f^{\Scal'})U^{\Scal'},
	\quad
	\forall
	i=0,\cdots,r.
	\]
	We write $\mu_{\bfX_i}$ for the measure on $\bfX_i$ induced by the Haar measure on $G_i$. This is the unique measure that is invariant under the action of $G_i$. Similarly, we write
	\[
	\mu_{\bfX}=\mu_{\bfX_1}\times\cdots\times\mu_{\bfX_r},
	\]
	the unique measure invariant under the action of $G$.
	We have the following result. map::
	\[
	\Pi
	\colon
	\bfG_0(\mathbb{A}_f^{\bcS})/U_0^{\bcS'}
	\to
	\prod_{i=1}^r
	\bfG_i(\mathbb{A}_f^{\bcS})/U_i^{\bcS'}
	\to
	\bfX.
	\]
	The first map is the diagonal map induced by the isomorphisms in (\ref{G_i(A_f^S)=G_0(A_f^S)}) while the second map is the natural projection taking into account (\ref{G vs G^1 for X}).

	For any $x\in\bfG_0(\mathbb{A}_f^{\bcS})/U_0^{\bcS'}$ and any $a\in\bfG_0(\mathbb{A}_f^{\bcS})$, we have an induced measure $\mu_{\Pi(T_a(x))}$ on $\bfX$ defined by
	\[
	\int_{\bfX}f(z)d\mu_{\Pi(T_a(x))}(z)
	:=
	\frac{1}{\deg(a)}
	\sum_{y\in T_a(x)}f(\Pi(y)),
	\quad
	\forall
	f\in L^2(\bfX).
	\]

	Note that $G_i$ acts on $\bfX_i$ by right multiplication, and similarly $G$ acts on $\bfX$. In particular, $\bfG_0(\Q_{\bcS'})$ acts on each $\bfX_i$ and acts diagonally on $\bfX$.

	We have the following equidistribution result:
	\begin{theorem}\label{Equidisitrbution of Hecke points by Clozel-Oh-Ullmo}
		Suppose that $r=1$ and that (\ref{G vs G^1 for X}) holds. Then, for any sequence $(a_n)_n$ in $\bfG_0(\mathbb{A}_f^{\bcS})$ such that $\deg(a_n)\to +\infty$, the measures $\mu_{\Pi(T_{a_n}(x))}$ converge to $\mu_{\bfX}$.
	\end{theorem}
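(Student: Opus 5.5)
This is essentially the Clozel--Oh--Ullmo equidistribution theorem, and I will reduce to it. With $r=1$, the space $\bfX=\bfX_1=\Gamma_1^1\backslash G_1$ is a finite-volume quotient of the $\Scal'\backslash\Scal$-adic group $G_1=\bfG_1^1(\Q_{\Scal'\backslash\Scal})$. It is compact because $\bfG_1(\R)$ is compact, so $\Gamma_1^1$ is cocompact.

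\textbf{Step 1: reduce to a statement on $\bfX$.} Via the fixed isomorphism $\bfG_0(\A^\Scal)\simeq\bfG_1(\A^\Scal)$, I regard $a_n$ as an element of $\bfG_1(\A_f^\Scal)$. I then check that $\Pi$ intertwines the Hecke correspondence $T_{a_n}$ on $\bfG_0(\A_f^\Scal)/U_0^{\Scal'}$ with the Hecke correspondence on the double quotient $Z\bfG_1(\Q)\backslash\bfG_1(\A_f^\Scal)/U_1^{\Scal'}$. Identity (\ref{G vs G^1 for X}) identifies this double quotient with $\bfX_1$. Since both sides are normalized by $1/\deg(a_n)$, I get $\mu_{\Pi(T_{a_n}(x))}=\mu_{a_n,\Pi(x)}$, the Hecke measure at the point $\Pi(x)\in\bfX_1$.

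\textbf{Step 2: express the test integral through matrix coefficients.} Take $f\in L^2(\bfX_1)$ continuous, and decompose $f=\int f\,d\mu_{\bfX}+f_0$ with $f_0\perp 1$. Then
\[
T_{a_n}f(x)-\int f\,d\mu_{\bfX}=T_{a_n}f_0(x).
\]
By strong approximation for the simply connected group $\bfG_1^1$ with respect to $\Scal'$, the space $L^2_0(\bfX_1)$ contains no nonzero $\bfG_1^1(\A_f^\Scal)$-invariant vectors: such a vector would be invariant under a dense subgroup, hence constant. The one-dimensional automorphic characters are killed by passing to $\bfG^1$ and by the central quotient in (\ref{G vs G^1 for X}). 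So the Clozel--Oh--Ullmo uniform bound on matrix coefficients applies. This bound holds place by place, using property $(\tau)$ or the Burger--Sarnak/Clozel bounds, for almost simple $\bfG_1^1$ with $\bfG_1(\Q_{\Scal'})$ noncompact. It gives
\[
\|T_{a_n}f_0\|_\infty\ll \deg(a_n)^{-\kappa}
\]
for $f_0$ smooth, i.e.\ invariant under a fixed compact open subgroup, and for some $\kappa>0$. The sup-norm bound follows from the $L^2$ operator-norm bound, because $\bfX_1$ is a finite set modulo the level in the relevant adelic directions; equivalently, one uses the pointwise bound of \cite{ClozelOhUllmo2001}, Theorem 1.1.

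\textbf{Step 3: conclude.} Since $\deg(a_n)\to\infty$, $T_{a_n}f(x)\to\int f\,d\mu_{\bfX}$ for all smooth $f$. Smooth functions are dense in $C(\bfX)$, so the measures converge weak-$*$ to $\mu_{\bfX}$.

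\textbf{Main obstacle.} I expect the main obstacle to be the bookkeeping in Step 1: ensuring that degree and Hecke orbit are compatible when passing from $\bfG_0$ to $\bfG_1$ and from $\bfG$ to $\bfG^1$ modulo center. In particular, the fibres of $\Pi$ on $T_{a_n}(x)$ must have uniform size. My first attempt will be to use the assumed equality
\[
\bfG_i(\A_f^{\Scal'})=Z\,\bfG_i(\Q)\,\bfG_i^1(\A_f^{\Scal'})\,U^{\Scal'}
\]
to write $a_n=z\gamma g^1u$ and replace $a_n$ by its $\bfG^1$-part.
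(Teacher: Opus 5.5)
Your proposal is correct and follows essentially the paper's route. The paper's proof is a direct reduction to Clozel--Oh--Ullmo (Theorem~1.7 and \S4.7): it lifts $f$ to $Z_{\bfG}\Gamma\backslash(G\times\bfG(\R))$ using compactness of $\bfG(\R)$, passes between the $\Q_{\Scal'\backslash\Scal}$ and adelic pictures by strong approximation, and appeals to (\ref{G vs G^1 for X}) to extend their $\GL_n/\GSp_{2n}$ argument; your Step~2 simply unpacks the spectral-gap mechanism behind that citation.

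As a minor aside, your one-dimensional-character argument already covers $a_n\notin Z\,\bfG_1^1U_1^{\Scal'}$, so the tentative rewriting $a_n=z\gamma g^1u$ in your last paragraph is unnecessary. It would also not work verbatim, because the rational factor $\gamma$ sits to the right of the point and cannot be absorbed into $\bfG_1(\Q)$.
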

	\begin{proof}
		We need to show that for any $f\in L^2(\bfX)$ with compact support,
		\begin{equation}\label{mu_{a_n,x} goes to mu_{bfX_i}}
			\int_{\bfX}f(z)d\mu_{\Pi(T_{a_n}(x))}(z)
			\rightarrow
			\int_{\bfX}f(z)d\mu_{\bfX}(z).
		\end{equation}
		We have a natural projection
		\[
		Z_{\Gbf}(\Q_{{\bcS'}\backslash\bcS}\times\R)
		\Gamma\backslash(\Gbf(\Q_{{\bcS'}\backslash\bcS})\times\bfG(\R))
		\longrightarrow
		\bfX,
		\]
		so we may view $f$ as an element of $L^2\bigl(Z_{\Gbf}(\Q_{{\bcS'}\backslash\bcS}\times\R)\Gamma\backslash(G\times\bfG(\R))\bigr)$, using the assumption that each $\bfG_i(\R)$ is compact. Then (\ref{mu_{a_n,x} goes to mu_{bfX_i}}) follows from \cite[Theorem 1.7 and §4.7]{ClozelOhUllmo2001}.\footnote{In \cite{ClozelOhUllmo2001}, the authors use $\Gbf(\Q_{\Scal'\backslash\Scal})$ instead of $\Gbf(\A^\Scal)$. It is straightforward to translate between the two using the strong approximation property for $\Gbf$. Moreover, \cite[§4.7]{ClozelOhUllmo2001} only treats the case $\Gbf_0=\GL_n$ or $\GSp_{2n}$; the same argument applies to our $\Gbf_0$, assuming (\ref{G vs G^1 for X}).}
	\end{proof}

	Since $\bfG_0$ is almost simple, there is a simple subgroup $\bfG_0'$ of $\bfG_0$ such that $\bfG_0'\subset\bfG_0\subset\mathrm{Aut}(\bfG_0')$. Similarly, we have simple subgroups $\bfG_i'$ of $\bfG_i$. Under the identifications (\ref{G_i(A_f^S)=G_0(A_f^S)}), we have isomorphisms
	\[
	\Gbf_0'(\A_f^\Scal)\simeq\Gbf_i'(\A_f^\Scal),
	\quad
	\forall
	i=1,\cdots,r.
	\]
	We set $\Gamma_i':=\Gamma_i\bigcap\Gbf_i'(\Q)$ and $\bfG'=\prod_{i=1}^r\bfG_i'$.
	Then we can and will identify $\Gbf'(\A_f^\Scal)$ with $\Gbf_0(\A_f^\Scal)^r$. We have a partition
	\begin{equation}\label{partition of {1,2,...,r}}
		\{1,\cdots,r\}=\Tcal_1\bigsqcup\cdots\bigsqcup\Tcal_k,
	\end{equation}
	such that $i,j\in\Tcal_s$ if and only if $\Gamma_i'$ and $\Gamma_j'$ are commensurable (viewed as subgroups in $\Gbf_0'(\A_f^\Scal)$). We define the following subgroup of $\Gbf'(\A_f^\Scal)$
	\[
	\Gbf'_\Gamma(\A_f^\Scal)
	:=
	\prod_{i=1}^k\Delta^{\Tcal_i}(\Gbf_0'(\A_f^\Scal)).
	\]
	Similarly, one defines $\Gbf'_\Gamma(\Q_v)$ for any place $v$ of $\Q$ not in $\Scal$.

	By \cite[Theorem 1.4]{EinsiedlerLindenstrauss2019}, we have
	\begin{theorem}\label{limit measure invariant under G'}
		Assume the following:
		\begin{enumerate}
			\item 
			$\Gbf_i=\Gbf_i'$ for all $i=0,\cdots,r$;

			\item 
			there are two finite places $\ell_1,\ell_2\in{\bcS'}\backslash\bcS$ such that $\bfG_0^1(\Q_{\ell_1})$ and $\bfG_0^1(\Q_{\ell_2})$ are both non-compact;

			\item 
			for $\ell'=\ell_1,\ell_2$, the quotient group $\Gbf_0^1(\Q_{\ell'})/Z_{\Gbf_0^1(\Q_{\ell'})}$ is simple.
		\end{enumerate}
		Fix an element $x\in\bfG_0(\mathbb{A}_f^{\bcS})/U_0^{\bcS'}$ and a sequence of elements $(a_n)_n$ in $\bfG_0(\mathbb{A}_f^{\bcS})$ ($n=1,2,\cdots$) such that $\deg(a_n)\to +\infty$. Then any limit measure of the sequence $\mu_{\Pi(T_{a_n}(x))}$ is the normalized Haar measure on a single orbit $Z_{\Gbf}(\Q_{{\bcS'}\backslash\bcS})\Gamma H$ for some closed subgroup $H\subset\Gbf^1(\Q_{\Scal_1'})$ such that
		\[
		HZ_{\Gbf(\Q_{\Scal_1'})}
		=\Gbf_\Gamma^1(\Q_{\Scal_1'})Z_{\Gbf(\Q_{\Scal_1'})}.
		\]
		Here $\Scal_1'=\{\ell_1,\ell_2\}$ and $\Gbf_\Gamma^1(\Q_{\Scal_1'})=\Gbf_\Gamma(\Q_{\Scal_1'})\bigcap\Gbf^1(\Q_{\Scal_1'})$.
	\end{theorem}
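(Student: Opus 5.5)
We reduce the statement to the joining-measure classification of \cite[Theorem 1.4]{EinsiedlerLindenstrauss2019}, then use Proposition \ref{closure of diagonal map, not simple} to pin down the group $H$. Throughout we work with $G_{\Scal_1'}:=\Gbf_0^1(\Q_{\Scal_1'})=\Gbf_0^1(\Q_{\ell_1})\times\Gbf_0^1(\Q_{\ell_2})$.

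\textbf{Step 1: invariance of limit measures.} Let $\mu$ be a weak-$*$ limit of $\mu_{\Pi(T_{a_n}(x))}$. Since $\bfX$ is compact, $\mu$ is a probability measure. Each projection $\mu^{(i)}$ to $\bfX_i$ is a limit of $\mu_{\Pi_i(T_{a_n}(x))}$, so by Theorem \ref{Equidisitrbution of Hecke points by Clozel-Oh-Ullmo} it equals $\mu_{\bfX_i}$. Hence $\mu$ is a joining of the Haar measures $\mu_{\bfX_1},\dots,\mu_{\bfX_r}$.

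To apply \cite{EinsiedlerLindenstrauss2019} we also need invariance under a diagonally embedded higher-rank group. For this we use that $T_{a_n}(x)$ is a union of orbits of the diagonal compact open group $\Delta(U_0^{\Scal'}\cap a_nU_0^{\Scal'}a_n^{-1})$. Moving the Hecke points through the places $\ell_1,\ell_2$ by strong approximation, we show that $\mu$ is invariant under the diagonal image of $G_{\Scal_1'}$ (modulo the center), as in the arguments of \cite{ClozelOhUllmo2001}.

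Alternatively, one could quote \cite[Theorem 1.4]{EinsiedlerLindenstrauss2019} directly: its hypotheses are assumptions (1)--(3) of the theorem (the $\Q$-rank-$2$ condition is provided by the two non-compact places), and it yields that each ergodic component of $\mu$ is algebraic. Combined with the fact that the marginals are Haar measures, this gives that $\mu$ is the normalized Haar measure on a single closed orbit $Z_{\Gbf}(\Q_{\Scal'\backslash\Scal})\Gamma H$, with $H\subset\Gbf^1(\Q_{\Scal_1'})$ closed and containing $\Delta(G_{\Scal_1'})$ up to the center.

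\textbf{Step 2: identifying $H$.} Since $H$ contains the diagonal and is normalized by $\Delta^{\Tcal}(G_{\Scal_1'})$, Proposition \ref{H is a product of diagonals}, applied to $G_{\Scal_1'}$ as a product of the two simple factors $\Gbf_0^1(\Q_{\ell'})/Z$ given by assumption (3), shows that $HZ$ is a product of partial diagonals over partitions of $\{1,\dots,r\}$, one partition for each $\ell'$. Using $\overline{\Gamma H}=\Gamma H$ and projecting to pairs of coordinates $(i,j)$, Corollary \ref{Gamma_sigma and Gamma_sigma' comm or not, not simple} determines these partitions: coordinates $i$ and $j$ are glued exactly when $\Gamma_i'$ and $\Gamma_j'$ are commensurable. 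Its density hypothesis ``$\Gamma G^{(j)}$ dense'' holds by strong approximation for $\Gbf_0^1$ at the other place. Hence both partitions coincide with \eqref{partition of {1,2,...,r}}, which gives $HZ=\Gbf^1_\Gamma(\Q_{\Scal_1'})Z$.

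\textbf{Main obstacle.} Step 1, namely the invariance under the diagonal group, is the delicate point. Hecke points by themselves do not obviously give $\Delta(G_{\Scal_1'})$-invariance in the limit. We would first try to deduce it from the positivity of entropy along $\ell_1$ and $\ell_2$, which is the condition \cite{EinsiedlerLindenstrauss2019} actually requires. That positivity should come from the equidistribution of the marginals, since Haar measure has maximal entropy.
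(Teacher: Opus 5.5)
\textbf{Step 2 proves only one inclusion.} The gap you did not flag is in Step 2, and it is where the paper's proof does its real work. Closedness of $\Gamma H$ together with $H\supseteq\Delta(\Gbf_0^1(\Q_{\Scal_1'}))$ gives $\Gamma H\supseteq\overline{\Gamma\Delta(\Gbf_0^1(\Q_{\Scal_1'}))}$. Proposition~\ref{closure of diagonal map, not simple}, via Corollary~\ref{Gamma_sigma and Gamma_sigma' comm or not, not simple}, identifies this closure. You therefore get only $HZ_{\Gbf(\Q_{\Scal_1'})}\supseteq\Gbf^1_\Gamma(\Q_{\Scal_1'})Z_{\Gbf(\Q_{\Scal_1'})}$, which says that non-commensurable coordinates are separated. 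Nothing you use shows that commensurable coordinates stay glued, or that the limit is a single orbit through the base point.

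Write $G_0=\Gbf_0^1(\Q_{\Scal'\backslash\Scal})$ and take $\Gamma_i,\Gamma_j$ commensurable. Each of the following is a diagonally invariant joining with Haar marginals: the product Haar measure on $\Xbf_i\times\Xbf_j$; the Haar measure on a closed orbit $(\Gamma_i\times\Gamma_j)(1,c)\Delta(G_0)$ with $c$ in the commensurator of $\Gamma_j$ (the graph of a Hecke correspondence); and any convex combination of these. So neither ``glued exactly when commensurable'' nor ``Haar marginals force a single orbit'' follows from what you have.

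The paper rules these out using specific information about where the Hecke points lie. For $j_1,j_2$ in one commensurability class, it passes to the finite cover $\Xbf'$ in which both slots are quotients by $\Gamma_{j_1}\cap\Gamma_{j_2}$. There the $(j_1,j_2)$-component of $\Pi'$ is literally the diagonal map. Hence every point of $\Pi'(T_{a_n}(x))$ lies on one closed orbit of the diagonal group, and by Theorem~\ref{Equidisitrbution of Hecke points by Clozel-Oh-Ullmo} the limit is the Haar measure of that orbit. Pushing down to $\Xbf$ confines the $(j_1,j_2)$-projection of $H$ to the diagonal times the center, which is the reverse inclusion. It also lets one take the translate $g$ with equal components on each $\Tcal_i$, so $g$ is absorbed into $H$ and the orbit is $Z\Gamma H$ itself. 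This finite-cover argument is the idea missing from your proposal.

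\textbf{Step 1 is not closed, and your fallback routes are circular.} In \cite{EinsiedlerLindenstrauss2019} a joining is by definition invariant under the diagonal class-$\mathcal{A}'$ action $\psi$. So \cite[Theorem 1.4]{EinsiedlerLindenstrauss2019} cannot be quoted to produce that invariance; it presupposes it, and it also presupposes ergodicity. Likewise, the entropy of $\mu$ along $\alpha_1,\alpha_2$ is only defined once $\mu$ is $\psi$-invariant. The maximal entropy of the Haar marginals therefore gives nothing beforehand.

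Concretely, for $a_n$ supported outside $\Scal'$, translating $\mu_{\Pi(T_{a_n}(x))}$ by a diagonal element $g$ just gives $\mu_{\Pi(T_{a_n}(xg))}$. What you need is that the limit does not change when the base point moves along its diagonal orbit, and strong approximation alone does not give this.

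You have correctly located the soft spot. The paper disposes of it in one sentence, deducing diagonal invariance from the ergodicity of the Haar marginals in Theorem~\ref{Equidisitrbution of Hecke points by Clozel-Oh-Ullmo}. That does not by itself make a measure with those marginals invariant, so you will not find a complete argument for this step there. As written, your proposal leaves it open as well.
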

	\begin{proof}
		Let $\mu$ be a limit measure of the sequence $\mu_{\Pi(T_{a_n}(x))}$; say $\mu_{\Pi(T_{a_{n_k}}(x))}\to\mu$ for a subsequence $n_k\to\infty$. By Theorem \ref{Equidisitrbution of Hecke points by Clozel-Oh-Ullmo}, $\mu$ is a joining of the measures $\mu_{\bfX_1},\cdots,\mu_{\bfX_r}$.
		Since each $\mu_{\bfX_i}$ is $\bfG_0^1(\Q_{\bcS'})$-ergodic by Theorem \ref{Equidisitrbution of Hecke points by Clozel-Oh-Ullmo}, the measure $\mu$ is $\Delta(\Gbf_0^1(\Q_{\Scal'}))$-ergodic. Thus $\mu$ is invariant under the diagonal action of $\bfG_0^1(\Q_{\bcS'})$; in other words, for any $f\in L^2(\bfX)$, we have
		\[
		\int_{\bfX}f(zg)d\mu(z)=\int_{\bfX}f(z)d\mu(z),
		\]
		for any $g\in\Delta(\bfG_0^1(\Q_{\bcS'}))$.

		For each $i=1,\cdots,r$, fix an arbitrary compact open subgroup $U_{i,{\bcS'}\backslash\bcS'_1}$ of $\bfG_i(\Q_{{\bcS'}\backslash\bcS'_1})$. Write then $U_{{\bcS'}\backslash\bcS'_1}=\prod_{i=1}^rU_{i,{\bcS'}\backslash\bcS'_1}$ and $\overline{\mu}$ for the quotient measure of $\mu$ on $\bfX/U_{{\bcS'}\backslash\bcS'_1}$. Thus, to prove the theorem, it suffices to show that $\overline{\mu}$ is invariant under $\prod_{i=1}^r\bfG_i'(\Q_{\bcS'_1})$ for arbitrary $U_{i,{\bcS'}\backslash\bcS'_1}$.

		For two elements $\alpha_1\in\bfG_0(\Q_{\ell_1})$ and $\alpha_2\in\bfG_0(\Q_{\ell_2})$, we have the following proper homomorphisms ($i=1,\cdots,r$)
		\[
		\psi_i
		\colon
		\mathbb{Z}^2
		\to
		\bfG_i(\Q_{\ell_1})\times\bfG_i(\Q_{\ell_2}),
		\quad
		(m_1,m_2)
		\mapsto
		(\alpha_1^{m_1},\alpha_2^{m_2}).
		\]
		We can take $\alpha_1,\alpha_2$ such that $\psi=(\psi_1,\cdots,\psi_r)$ is of class-$\mathcal{A}'$ (with respect to the set $\bcS'_1=\{\ell_1,\ell_2\}$) in the sense of \cite[Definition 1.3]{EinsiedlerLindenstrauss2019}.
		Clearly $\overline{\mu}$ is invariant and ergodic under $\mathbb{Z}^2$ by the preceding claim. Therefore by \cite[Theorem 1.4]{EinsiedlerLindenstrauss2019}, $\overline{\mu}$ is an algebraic measure defined over $\Q$, in particular, there is a closed subgroup $H$ of $\bfG^1(\Q_{\bcS'_1})$ such that the support of $\overline{\mu}$ is of the form $\Gamma\backslash\Gamma Hg$ for some $g\in\bfG(\Q_{\bcS'_1})$. We next show
		\[
		HZ_{\Gbf(\Q_{\Scal_1'})}
		=\bfG_\Gamma^1(\Q_{\bcS'_1})Z_{\Gbf(\Q_{\Scal_1'})}.
		\]

		For $\ell\in\Scal_1'$, we write $H_\ell$ for the projection of $H$ to the $\ell$-th component. We write in the following
		\[
		\Pbf H
		:=H/(H\bigcap Z_{\Gbf(\Q_{\Scal_1'})}),
		\quad
		\Pbf\Gbf_\Gamma^1(\Q_{\Scal_1'})
		:=\Gbf_\Gamma^1(\Q_{\Scal_1'})/(\Gbf_\Gamma^1(\Q_{\Scal_1'})\bigcap Z_{\Gbf(\Q_{\Scal_1'})}).
		\]
		Similarly, one has $\Pbf H_\ell$ and $\Pbf\Gbf_\Gamma^1(\Q_\ell)$ for $\ell\in\Scal_1'$. So we need to show 
		\[
		\Pbf H=\Pbf\Gbf_\Gamma^1(\Q_{\Scal_1'}).
		\]

		By construction, $H$ contains the diagonal image $\Delta(\Gbf_0^1(\Q_{\Scal_1'}))$ of $\bfG_0^1(\Q_{\bcS'_1})$, thus it is invariant under the conjugation of $\Delta(\bfG_0(\Q_{\bcS'_1}))$. It follows that $\Pbf H$ is invariant under the conjugation of $\Delta(\Gbf_0(\Q_{\Scal_1'}))$. By Proposition \ref{product of diagonals} and our assumption (3), we know that for any $\ell\in\Scal_1'$, $\Pbf H_\ell$ is a product of diagonals, in other words, there is partition
		$\{1,2,\cdots,r\}=\bigsqcup_{i}^{k(\ell)}\mathcal{T}_i(\ell)$ such that
		\[
		H_\ell Z_{\Gbf(\Q_\ell)}
		=\prod_{i=1}^{k(\ell)}
		\Delta^{\mathcal{T}_i(\ell)}(\bfG_0(\Q_\ell))
		Z_{\Gbf(\Q_\ell)}.
		\]
		By assumption (3), we have
		\[
		HZ_{\Gbf(\Q_{\Scal_1'})}
		=
		\prod_{i=1}^{k'}\Delta^{\Tcal_i'}(\Gbf_0(\Q_{\Scal_1'}))
		Z_{\Gbf(\Q_{\Scal_1'})},
		\]
		where $\{1,\cdots,r\}=\bigsqcup_{i=1}^{k'}\Tcal_i'$ is the partition obtained by intersecting the two partitions $\{\Tcal_i(\ell_1)\}$ and $\{\Tcal_i(\ell_2)\}$. 
		By Proposition \ref{closure of diagonal map, not simple}, we have the containment
		\[
		HZ_{\Gbf(\Q_{\Scal_1'})}
		\supset
		\prod_{i=1}^{k}\Delta^{\Tcal_i}(\Gbf_0(\Q_{\Scal_1'}))
		Z_{\Gbf(\Q_{\Scal_1'})}
		=
		\Gbf_\Gamma(\Q_{\Scal'\backslash\Scal})
		Z_{\Gbf(\Q_{\Scal'\backslash\Scal})}.
		\]
		We claim that this partition $\{\Tcal_i'\}$ is the same as the partition $\{\Tcal_i\}$ in (\ref{partition of {1,2,...,r}}). For this, it suffices to show that for any $j_1\neq j_2=1,\cdots,r$, if $j_1,j_2\in\Tcal_i$, then $j_1,j_2\in\Tcal_{i'}'$ for some $i'$. By definition, $\Gamma_{j_1}$ and $\Gamma_{j_2}$ are commensurable. We write
		\[
		\Gamma_{j_1,j_2}:=\Gamma_{j_1}\bigcap\Gamma_{j_2}
		\]
		and define $\Xbf'$ to be $\Xbf$ with the components $\Xbf_{j_1}$ and $\Xbf_{j_2}$ replaced by $\Xbf_{j_1}':=Z_{\Gbf_{j_1}}(\Q_{{\bcS'}\backslash\bcS})
		\Gamma_{j_1,j_2}\backslash\Gbf_{j_1}(\Q_{\Scal'\backslash\Scal})$
		and
		$\Xbf_{j_2}':=Z_{\Gbf_{j_2}}(\Q_{{\bcS'}\backslash\bcS})
		\Gamma_{j_1,j_2}\backslash\Gbf_{j_2}(\Q_{\Scal'\backslash\Scal})$ respectively. Similar to the map $\Pi$, we have
		\[
		\Pi'
		\colon
		\Gbf_0(\A_f^\Scal)/U_0^{\Scal'}
		\to
		\Xbf'.
		\]
		Then for any limit measure $\mu'$ of the sequence $(\mu_{\Pi'(T_{a_{n_k}}(x))})_{k>0}$ on $\Xbf'$, we write $\mu'_{j_1,j_2}$ for its projection to $\Xbf_{j_1}'\times\Xbf_{j_2}'\simeq(\Xbf_{j_1}')^2$. Since the composition of $\Pi'$ with the projection of $\Xbf'$ to $\Xbf_{j_1}'\times\Xbf_{j_2}'$ is the diagonal map, by Proposition \ref{Equidisitrbution of Hecke points by Clozel-Oh-Ullmo}, the measure $\mu'_{j_1,j_2}$ is the normalized measure on $\Gamma_{j_1,j_2}^2\Delta_{j_1,j_2}(\Gbf_0^1(\Q_{\Scal'\backslash\Scal}))g'$ for some $g'\in\Gbf_{j_1}^1(\Q_{\Scal'\backslash\Scal})
		\times
		\Gbf_{j_2}^1(\Q_{\Scal'\backslash\Scal})$. Here
		\[
		\Delta_{j_1,j_2}
		\colon
		\Gbf_0^1(\Q_{\Scal'\backslash\Scal})
		\to
		\Gbf_{j_1}^1(\Q_{\Scal'\backslash\Scal})
		\times
		\Gbf_{j_2}^1(\Q_{\Scal'\backslash\Scal})
		\]
		is the diagonal map.

		Along the natural map $\Xbf'\to\Xbf$, the measure $\mu'$ on $\Xbf'$ is mapped to the measure $\mu$ on $\Xbf$. By the above discussion, we know that the projection $\mu_{j_1,j_2}$ of $\mu$ to $\Xbf_{j_1}\times\Xbf_{j_2}$ is the normalized measure on $(\Gamma_{j_1}\times\Gamma_{j_2})\Delta_{j_1,j_2}(\Gbf_0(\Q_{\Scal'\backslash\Scal}))g'$. It follows that the projection of $H$ to the components $(j_1,j_2)$ is contained in $\Delta_{j_1,j_2}(\Gbf_0(\Q_{\Scal'\backslash\Scal}))(Z_{\Gbf_{j_1}(\Q_{\Scal'\backslash\Scal})}\times Z_{\Gbf_{j_2}(\Q_{\Scal'\backslash\Scal})})$. We deduce immediately that $j_1,j_2\in\Tcal_{i'}'$ for some $i'$.

		To finish the proof, we need to show that we can take $g\in Z_{\Gbf(\Q_{\Scal'\backslash\Scal})}$.
		For this, consider any $j_1,j_2\in\Tcal_i$. From the above discussion on $\mu_{j_1,j_2}'$, we know that the two projections of $\Gamma Hg$ to the component $j_1$ and $j_2$ must be equal to each other, so we can take the two components $g_{j_1}$ and $g_{j_2}$ of $g$ at $j_1$ and $j_2$ respectively to be equal to each other. In other words, we can assume $g\in\prod_{i=1}^k\Delta^{\Tcal_i}(\Gbf_0^1(\Q_{\Scal'\backslash\Scal}))$. Thus we have
		\[
		\Gamma HZ_{\Gbf(\Q_{\Scal'\backslash\Scal})}
		=
		\Gamma HgZ_{\Gbf(\Q_{\Scal'\backslash\Scal})},
		\]
		which finishes the proof of the theorem.
	\end{proof}

	\section{Shimura varieties of Hodge type}\label{Shimura varieties of Hodge type}
	We first fix some notation, following closely
	\cite{Kisin2010,Kisin2017,Wortmann2013,Zhang2025}.
	We fix a Shimura datum of Hodge type $(G,X)$ with $G$ a connected reductive group over $\Q$. We also fix a symplectic embedding
	\[
	(G,X)\hookrightarrow(\bfGSp_{L,\Q},S^\pm),
	\]
	where $S^\pm$ is the Siegel upper half space of rank equal to $\mathrm{rk}(L)$. We can and will assume $\mathrm{rk}(L)\ge4$ in the following.
	We fix a compact open subgroup $U$ of $G(\A_f)$ and write $Sh_{U,\C}=Sh_{U,\C}(G,X)$ for the Shimura variety attached to the triple $(G,X,U)$, whose $\C$-points are given by
	\begin{equation}\label{S_K(C) and double coset}
		Sh_{U,\C}(\C)
		=
		G(\Q)\backslash(X\times G(\A_f)/U),
	\end{equation}
	which has a canonical model defined over the reflex field $F=F(G,X)$. We denote this canonical model by $ Sh_{U,F}$. Similarly, for a compact open subgroup $\Ubf$ of $\bfGSp_L(\A_f)$, we have a canonical model $ Sh_{\Ubf,\Q}$ over $\Q$. Suppose that $U$ is contained in $\Ubf$. Then we have a morphism of Shimura varieties
	\[
	Sh_{U,F}\to Sh_{\Ubf,F}= Sh_{\Ubf,\Q}\times_\Q E.
	\]

	For a place $v$ of $F$ over a rational prime $\ell$ such that $G$ is unramified at $\ell$ and $U=U^\ell U_\ell$ with $U_\ell$ hyperspecial at $\ell$, Kisin (\cite{Kisin2010}) constructed an integral canonical model of $Sh_{U,F}$ over $\Ocal_{(v)}$, the localization of $\Ocal_F$ at $v$. We denote this integral model by $Sh_{U,\Ocal_{(v)}}$. We assume henceforth that $\Ubf=\Ubf^\ell\Ubf_\ell$ with $\Ubf_\ell$ also hyperspecial at $\ell$.
	We write $\Acal_{\Ubf,\Ocal_{(v)}}\to Sh_{\Ubf,\Ocal_{(v)}}$ for the universal abelian scheme over $Sh_{\Ubf,\Ocal_{(v)}}$ and we denote by
	\[
	\Acal_{K,\Ocal_{(v)}}\to Sh_{U,\Ocal_{(v)}}
	\]
	its pullback along $Sh_{U,\Ocal_{(v)}}\to Sh_{\Ubf,\Ocal_{(v)}}$. We fix a reductive model $\Gcal$ of $G$ over $\Z_{(\ell)}$ such that $U_\ell=\Gcal(\Z_\ell)$. Then there is a finite set of tensors $s^{(\ell)}=(s_i^{(\ell)})_i\subset L^\otimes_{\Z_{(\ell)}}$ such that $\Gcal$ is identified with the stabilizer in $\GL(L_{\Z_{(\ell)}})$ of these tensors $s^{(\ell)}$ via our symplectic embedding $G\hookrightarrow\bfGSp_{L,\Q}$ (\cite[§2.3.1 and §2.3.2]{Kisin2010}).\footnote{Here $L_{\Z_{(\ell)}}^\otimes$ is the direct sum of all $\Z_{(\ell)}$-modules arising from $L_{\Z_{(\ell)}}$ by taking a finite number of the following operations: duals, tensor products, symmetric powers and exterior powers.}

	For each point $x\in Sh_{U,\Ocal_{(v)}}(\overline{\Q})$, Kisin constructed tensors $s^{(\ell)}_{\ell,x}=(s^{(\ell)}_{i,\ell,x})_i\subset(H_\etrm^1(\Acal_x,\Z_\ell)\otimes_{\Z_\ell}B_\crys)^\otimes$ (\cite[§1.3.6]{Kisin2017}). Here $\Acal_x$ is the specialization of $\Acal_{K,\Ocal_{(v)}}$ to the point $x$. For each prime $\ell'\neq\ell$, Kisin also constructed tensors $s^{(\ell)}_{\ell',x}=(s^{(\ell)}_{i,\ell',x})\subset(H^1_\etrm(\Acal_x,\Q_{\ell'}))^\otimes$ (\emph{loc.cit}).

	Similarly, for each point $x\in Sh_{U,\Ocal_{(v)}}(\overline{\F}_\ell)$, Kisin constructed tensors $s^{(\ell)}_{0,x}=(s^{(\ell)}_{i,0,x})_i\subset(H^1_\crys(\Acal_x/W(\overline{\F}_\ell))\otimes_{W(\overline{\F}_\ell)}B_\crys)^\otimes$ (\cite[§1.3.10]{Kisin2017}). For each prime $\ell'\neq\ell$, there are tensors $s^{(\ell)}_{\ell',x}=(s^{(\ell)}_{i,\ell',x})_i\subset H^1_\etrm(\Acal_x,\Q_{\ell'})^\otimes$ (\cite[§1.3.6]{Kisin2017}). Similarly, we have tensors $s^{(\ell)}_{\ell,x}=(s^{(\ell)}_{i,\ell,x})$ (\emph{loc.cit}). We have a natural isomorphism of $W(\overline{\F}_\ell)$, resp. $\Q_{\ell'}$-modules
	\begin{equation}\label{relation between Hodge tensors}
		L_{W(\overline{\F}_\ell)}\simeq
		H^1_\crys(\Acal_x/W(\overline{\F}_\ell)),\,
		\text{resp. }
		L_{\Q_{\ell'}}\simeq
		H^1_\etrm(\Acal,\Q_{\ell'}),
	\end{equation}
	taking the tensors $s^{(\ell)}_i$ to $s^{(\ell)}_{i,0,x}$, resp. $s^{(\ell)}_{i,\ell',x}$ for all $i$. These tensors for $x\in Sh_{U,\Ocal_{(v)}}(\overline{\F}_\ell)$ are constructed by considering a lifting $\widetilde{x}\in Sh_{U,\Ocal_{(v)}}(\overline{\Q})$ of $x$ and using $\ell'$-adic comparison theorems (\cite[Proposition 1.3.7]{Kisin2017}).

	For $x$ either in $Sh_{U,\Ocal_{(v)}}(\overline{\Q})$ or in $Sh_{U,\Ocal_{(v)}}(\overline{\F}_\ell)$, we define an algebraic subgroup $I_x$ of $\Aut_\Q(\Acal_x)$, consisting of elements fixing the tensors $s^{(\ell)}_{i,0,x}$ and $s^{(\ell)}_{i,\ell',x}$ for any $\ell'\neq\ell$.

	\begin{lemma}\label{I_x(Q_p)=G(Q_p) for supersingular}
		Let $x\in Sh_{U,\Ocal_{(v)}}(\overline{\F}_\ell)$. If $\Acal_x$ is a supersingular abelian variety, then for any prime $\ell'\neq\ell$, we have an isomorphism of algebraic groups over $\Q_{\ell'}$::
		\[
		I_{x/\Q_{\ell'}}:=I_x\times_\Q\Q_{\ell'}\simeq G_{\Q_{\ell'}}.
		\]
	\end{lemma}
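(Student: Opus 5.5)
We would prove this by combining Kisin's description of isogeny classes of mod-$\ell$ points with the fact that, for a supersingular point, the Frobenius becomes central after a finite extension.

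\textbf{Step 1 (Frobenius centralizer).} Choose a model of $\Acal_x$ over a finite field $\F_q$, with $q=\ell^m$, together with its tensors; this is possible by \cite{Kisin2017}. Let $\gamma\in G(\Q_{\ell'})$ be the geometric Frobenius, transported to $G_{\Q_{\ell'}}$ via the isomorphism \eqref{relation between Hodge tensors}. The tensors $s^{(\ell)}_{\ell',x}$ are Galois-invariant, so $\gamma$ fixes them and therefore lies in $G$. By Kisin's version of Tate's theorem with tensors \cite[§2]{Kisin2017}, after enlarging $\F_q$ we get
\[
I_{x}\times_\Q\Q_{\ell'}\simeq Z_{G_{\Q_{\ell'}}}(\gamma^{s})^\circ
\]
for all sufficiently divisible $s$, compatibly with \eqref{relation between Hodge tensors}. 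Here $I_x$ is taken over $\overline{\F}_\ell$, so it is the union over finite extensions, and it stabilizes once $\gamma$ is replaced by a high enough power.

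\textbf{Step 2 (supersingular Frobenius is central).} Since $\Acal_x$ is supersingular, after a further finite extension its Frobenius satisfies $\pi=q^{1/2}\cdot\zeta$ with $\zeta$ a root of unity, and raising to a power makes $\pi\in\Q^\times$ a rational scalar. Then $\gamma^{s}$ acts on $H^1_\etrm(\Acal_x,\Q_{\ell'})$ as a scalar. That scalar lies in the center $\mathbb{G}_m\subset \bfGSp$, and it also lies in $G$; since $G$ contains the scalars, it is central in $G$. Hence $Z_G(\gamma^s)=G_{\Q_{\ell'}}$.

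\textbf{Step 3 (conclusion).} Because $G$ is connected, $Z_G(\gamma^s)^\circ=G_{\Q_{\ell'}}$. Combined with Step 1, this gives $I_{x/\Q_{\ell'}}\simeq G_{\Q_{\ell'}}$.

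The main obstacle is Step 1. It needs the Tate-type theorem for the tensor-preserving part of $\End(\Acal_x)\otimes\Q_{\ell'}$, namely that it equals the centralizer of Frobenius inside $G_{\Q_{\ell'}}$ rather than inside $\GL$. We would quote \cite[Corollary 2.3.2]{Kisin2017} (or the analogous statement in \cite{Zhang2025}). The point to check is that the inclusion $I_x\otimes\Q_{\ell'}\subset Z_G(\gamma)$ is an equality, which follows from Tate's theorem for $\End$ combined with the definition of $I_x$ as the tensor stabilizer. Compatibility of the crystalline tensors $s^{(\ell)}_{0,x}$ imposes no extra condition at $\ell'\neq\ell$.
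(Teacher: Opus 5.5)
Your argument is correct, but it takes a different route from the paper. The paper never passes to a model over a finite field or uses Frobenius. It uses Deligne--Ogus--Shioda to write $\Acal_x$ as isogenous to $E(\ell)^{\mathrm{rk}(L)/2}$, so that $\End^0(\Acal_x)\simeq\mathrm{M}_{\mathrm{rk}(L)/2}(B(\ell))$. Since $B(\ell)$ splits at $\ell'\neq\ell$, this gives $\Aut_{\Q_{\ell'}}(\Acal_x)\simeq\GL_{\mathrm{rk}(L)}(\Q_{\ell'})$, which is all of $\GL(H^1_\etrm(\Acal_x,\Q_{\ell'}))$. The tensor stabilizer in it is then $G_{\Q_{\ell'}}$ by \eqref{relation between Hodge tensors}.

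You instead quote Kisin's tensor version of Tate's theorem, $I_x\otimes\Q_{\ell'}\simeq Z_{G_{\Q_{\ell'}}}(\gamma^s)$, and then make the centralizer everything by noting that a power of a supersingular Frobenius is a rational scalar. That step needs Frobenius to be semisimple, which holds because $\Q[\pi]$ is a product of number fields; it is worth saying explicitly. The two arguments are two faces of one fact. Tate's theorem together with central Frobenius says exactly that $\End^0(\Acal_x)\otimes\Q_{\ell'}$ fills $\End(H^1_\etrm(\Acal_x,\Q_{\ell'}))$, and the paper gets this directly and elementarily from the structure of supersingular abelian varieties.

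What your route buys is that it addresses a point the paper's proof passes over silently. $I_x$ is cut out by the crystalline tensors $s^{(\ell)}_{0,x}$ and by the $\ell''$-adic tensors for every $\ell''\neq\ell$. The paper only computes the stabilizer of the $\ell'$-adic tensors inside $\Aut_{\Q_{\ell'}}(\Acal_x)$ and tacitly identifies it with $I_x\otimes\Q_{\ell'}$.

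Be careful, though: that identification is not a formal consequence of Tate's theorem plus the definition of $I_x$, as your last paragraph suggests. Likewise, the claim that the crystalline tensors ``impose no extra condition'' at $\ell'$ is not automatic. Both are the nontrivial content of the result you cite from \cite{Kisin2017}, whose proof goes through CM lifts, so they should be attributed to that citation rather than asserted.

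The cost of your route is dependence on those deep results. The gain is that it gives $I_x\otimes\Q_{\ell'}$ as a Frobenius centralizer for arbitrary points, not just supersingular ones.
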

	\begin{proof}
		By definition, for any $\Q_{\ell'}$-algebra $R$, $I_{x/\Q_{\ell'}}(R)$ consists of elements in $\Aut_{\Q_{\ell'}}(\Acal_x)(R)$ fixing the tensors $s^{(\ell)}_{i,0,x}$ and $s^{(\ell)}_{i,\ell',x}$. By the assumption that $\mathrm{rk}(L)\ge4$, there exists a supersingular elliptic curve $E(\ell)$ over $\overline{\F}_\ell$ such that $\Acal_x$ is isogenous to $E(\ell)^{\mathrm{rk}(L)/2}$ (by the theorem of Deligne--Ogus--Shioda). In particular, we have an isomorphism of algebraic groups over $\Q$:
		\[
		\Aut_\Q(\Acal_x)
		\simeq
		\GL_{\mathrm{rk}(L)/2}(B(\ell)),
		\]
		where $B(\ell)=\End_{\overline{\F}_\ell}(E(\ell))^\circ$. Since $B(\ell)$ is unramified at $\ell'\neq\ell$, we have an isomorphism $B(\ell)\otimes_\Q\Q_{\ell'}\simeq\mathrm{M}_2(\Q_{\ell'})$ and thus
		\[
		\Aut_{\Q_{\ell'}}(\Acal_x)
		\simeq
		\GL_{\mathrm{rk}(L)/2}(B(\ell)\otimes\Q_{\ell'})
		\simeq
		\GL_{\mathrm{rk}(L)}(\Q_{\ell'}).
		\]

		Since we have the symplectic embedding $G\hookrightarrow\bfGSp_{L,\Q}$, without loss of generality, we can assume that one of the tensors, say, $s^{(\ell)}_{i_0}$, corresponds to the symplectic pairing $\langle-,-\rangle$ on $L_{\Z_{(\ell)}}$. So using the isomorphisms in (\ref{relation between Hodge tensors}), the stabilizer in $\Aut_{\Q_{\ell'}}(\Acal_x)$ of the tensor $s^{(\ell)}_{i_0,\ell',x}$ is exactly $\bfGSp_{L/\Q_{\ell'}}$. It follows that the stabilizer in $\Aut_{\Q_{\ell'}}(\Acal_x)$ of all the tensors $(s^{(\ell)}_{i,\ell',x})_i$ is exactly the group $G_{\Q_{\ell'}}$. Thus we have $I_{x/\Q_{\ell'}}\simeq G_{\Q_{\ell'}}$.
	\end{proof}
	We deduce that $I_x$ is a form of $G$. Moreover, if $\Acal_x$ is supersingular, then $I_x(\R)$ is a closed subgroup of $\Gbf_{\Acal_x}(\R)$, which is compact. It follows that $I_x(\R)$ is also compact.

	For a prime number $\ell$ such that $U=U^\ell U_\ell$ and a point $x\in Sh_{U,\Ocal_{(v)}}(K)$, where $K$ is an algebraically closed field, we write
	\[
	\Hcal^{G,\ell}(\Acal_x)
	\]
	for the set of points $x'\in Sh_{U,\Ocal_{(v)}}(K)$ such that there is a quasi-isogeny $\phi\colon\Acal_x\to\Acal_{x'}$ of degree prime to $\ell$ sending the tensors $s^{(\ell)}_{0,x}$, resp. $s^{(\ell)}_{\ell',x}$, to the tensors $s^{(\ell)}_{0,x'}$, resp. $s^{(\ell)}_{\ell',x'}$ (for any $\ell'$). For $x\in Sh_{U,\Ocal_{(v)}}(K)$ with $K=\overline{\Q}$ or $\overline{\F}_\ell$, by \cite[(1) \& (2)]{Zhang2025} and \cite[Corollary 1.4.2 \& Propositions 2.1.3, 2.1.5]{Kisin2017}, we have a natural bijection
	\[
	\Theta_x\colon
	\Hcal^{G,\ell}(\Acal_x)
	\simeq
	I_x(\Q)\backslash G(\A_f^{(\ell)})U/U.
	\]
	Here we implicitly identify $I_x(\Q_{\ell'})$ with $G(\Q_{\ell'})$ for $\ell'\neq\ell$ using Lemma \ref{I_x(Q_p)=G(Q_p) for supersingular}.
	Note that the right hand side is in natural bijection with $(I_x(\Q)\bigcap U^p)\backslash G(\Q_p)/U_p$.

	For $K=\overline{\Q}$, the point $x\in Sh_{U,\Ocal_{(v)}}(K)\subset Sh_{U,\Ocal_{(v)}}(\C)$ corresponds to an element $(h_x,g_x)\in X\times G(\A_f)$ under the identification (\ref{S_K(C) and double coset}). Then for any $g\in G(\Q_p)$, the point $\Theta_x^{-1}(g)\in Sh_{U,\Ocal_{(v)}}(K)\subset Sh_{U,\Ocal_{(v)}}(\C)$ corresponds to the element $(h_x,g_xg)\in X\times G(\A_f)$ under (\ref{S_K(C) and double coset}). Moreover, for $x\in Sh_{U,\Ocal_{(v)}}(\overline{\F}_\ell)$ and a lifting $\widetilde{x}\in Sh_{U,\Ocal_{(v)}}(\overline{\Q})$ of $x$, we have the following commutative diagram (\cite[Corollary 1.4.12]{Kisin2017})
	\begin{equation}\label{commutative diagram for I_x and I_{widetilde{x}}}
		\begin{tikzcd}
			I_{\widetilde{x}}(\Q)\backslash G(\A_f^{(\ell)})U/U
			\arrow[d]
			&
			\Hcal^{G,\ell}(\Acal_{\widetilde{x}})
			\arrow[l,"\Theta_{\widetilde{x}}"',"\simeq"]
			\arrow[r,hookrightarrow]
			\arrow[d,"\Rbf_\ell"]
			&
			Sh_{U,\Ocal_{(v)}}(\Ocal_{\overline{\Q}})
			\arrow[d,"\Rbf_\ell"]
			\\
			I_x(\Q)\backslash G(\A_f^{(\ell)})U/U
			&
			\Hcal^{G,\ell}(\Acal_x)
			\arrow[l,"\Theta_x"',"\simeq"]
			\arrow[r,hookrightarrow]
			&
			Sh_{U,\Ocal_{(v)}}(\overline{\F}_\ell)
		\end{tikzcd}
	\end{equation}
	The left vertical arrow is the natural projection map induced by the inclusion $I_{\widetilde{x}}(\Q)\hookrightarrow I_x(\Q)$.

	We consider a point $x\in Sh_{U,\Ocal_{(v)}}(\overline{\F}_\ell)$ with $\Acal_x$ supersingular, $U_p=I_x(\Z_p)$. We write $I_x^1$ for the derived subgroup of $I_x$ and $\proj\colon I_x\to I_x/I_x^1=:I_x^\ab$ for the projection map, where $I_x^\ab$ is a torus over $\Q$. We assume
	\begin{equation}\label{condition on similitude of I_x}
		\proj(I_x(\A_f^{(\ell)}))
		=
		\proj(I_x(\widehat{\Z}^{(\ell)}))\proj(I_x(\Q)\bigcap U_\ell).
	\end{equation}
	Then the inclusion $I_x^1\hookrightarrow I_x$ induces a bijection
	\begin{equation}\label{I_x and I_x^1}
		(I_x^1(\Q)\bigcap U_\ell)\backslash I_x^1(\A_f^{(\ell)})/I_x^1(\widehat{\Z}^{(\ell)})
		\simeq
		(I_x(\Q)\bigcap U_\ell)\backslash I_x(\A_f^{(\ell)})/I_x(\widehat{\Z}^{(\ell)}).
	\end{equation}
	Since $I_x^1(\widehat{\Z}^{(\ell)})$ contains the center of $I_x^1(\A_f^{(\ell)})$, we have
	\begin{equation}\label{I_x^1 and PI_x^1}
		(I_x^1(\Q)\bigcap U_\ell)\backslash\Pbf I_x^1(\A_f^{(\ell)})/\Pbf I_x^1(\widehat{\Z}^{(\ell)})
		\simeq
		(I_x^1(\Q)\bigcap U_\ell)\backslash I_x^1(\A_f^{(\ell)})/I_x^1(\widehat{\Z}^{(\ell)}).
	\end{equation}
	Putting all these together, we have a bijection
	\begin{equation}\label{PI_x^1 vs PI_x}
		(I_x^1(\Q)\bigcap U_\ell)\backslash\Pbf I_x^1(\A_f^{(\ell)})/\Pbf I_x^1(\widehat{\Z}^{(\ell)})
		\simeq
		(I_x(\Q)\bigcap U_\ell)\backslash\Pbf I_x(\A_f^{(\ell)})/\Pbf I_x(\widehat{\Z}^{(\ell)}).
	\end{equation}
	We write $I_x^{1,\ad}$ for the adjoint quotient of $I_x^1$. Then $I_x^{1,\ad}$ is the derived subgroup of $I_x^\ad$. By \cite{Humphreys1969}, we know that
	\begin{equation*}
		\Aut(I_x^{\ad}(\Q))=I_x^{\ad}(\Q)/Z_{I_x^{\ad}}(\Q)\rtimes\Aut_\Q(\Dfrak_x^{\ad}),
	\end{equation*}
	where $\Dfrak_x^{\ad}$ is the Dynkin diagram of $I_x^{\ad}$ and $\Aut_\Q(\Dfrak_x^{\ad})$ is the group of automorphisms over $\Q$ of $\Dfrak_x^{1,\ad}$.

	We fix a place $v(\ell)$ of $\overline{\Q}$ over $\ell$ and a point $x_0\in Sh_{U,F}(\overline{\Q})$ such that $x_0$ has \emph{supersingular} good reduction at $v(\ell)$. We also fix a non-empty finite subset $\Tcal=\{\sigma_1,\cdots,\sigma_r\}$ of $Z_{I_{x_0}}(\A_f)$. We identify $\Tcal$ with the set $\{1,2,\cdots,r\}$.
	Moreover, we write $x_0^{(\ell)}$ for the reduction modulo $v(\ell)$ of $x_0$. We write $\Gamma=I_{x_0}(\Q)\bigcap U_\ell$, $\Gamma^1=\Gamma\bigcap I_{x_0}^1(\Q)$ and for $i=1,\cdots,r$,
	\begin{align*}
		\Gamma^{(\ell)}
		&
		:=I_{x_0^{(\ell)}}(\Q)\bigcap U_\ell,
		\quad
		\Gamma_i^{(\ell)}:=\sigma_i^{-1}\Gamma\sigma_i,
		\\
		\Gamma^{(\ell),1}
		&
		:=\Gamma\bigcap I_{x_0^{(\ell)}}^1(\Q),
		\quad
		\Gamma^{(\ell),1}_i:=\sigma_i^{-1}\Gamma^1\sigma_i.
	\end{align*}	
	Then we have a partition
	\[
	\Tcal=\bigsqcup_{i=1}^k\Tcal_{i},
	\]
	such that $\sigma,\sigma'\in\Tcal_{i}$ if and only if
	$\Gamma_\sigma$ and $\Gamma_{\sigma'}$ are commensurable.

	We have the following simultaneous reduction map of the $p$-adic Hecke orbit of $\Acal_{x_0}$:
	\[
	\Rbf_{\Tcal}^G
	\colon
	\Hcal^{G,\ell}(\Acal_{x_0})
	\to
	\prod_{\sigma\in\Tcal}\Hcal^{G,\ell}(\Acal_{(\sigma x_0)^{(\ell)}}),
	\quad
	\Acal_{x_0'}
	\mapsto
	(\Acal_{(\sigma x_0')^{(\ell)}})_{\sigma\in\Tcal}.
	\]
	Moreover, assuming (\ref{condition on similitude of I_x}) and using (\ref{commutative diagram for I_x and I_{widetilde{x}}}), (\ref{I_x and I_x^1}), (\ref{I_x^1 and PI_x^1}), and (\ref{PI_x^1 vs PI_x}), we have the following commutative diagram:
	\begin{equation}\label{commutative diagram for R^G_T}
		\begin{tikzcd}
			\Hcal^{G,\ell}(\Acal_{x_0})
			\arrow[r,"\Rbf^G_{\Tcal}"]
			\arrow[d,"\simeq"]
			&
			\prod_{\sigma\in\Tcal}
			\Hcal^{G,\ell}(\Acal_{(\sigma x_0)^{(\ell)}})
			\arrow[d,"\simeq"]
			\\
			\Pbf\Gamma
			\backslash \Pbf G(\A_f^{(\ell)})/\Pbf U^{(\ell)}
			\arrow[r,"\pr_\Tcal"]
			\arrow[d,"\simeq"]
			&
			\prod_{i=1}^r
			\Pbf\Gamma_i^{(\ell)}
			\backslash \Pbf G(\A_f^{(\ell)})/\Pbf U^{(\ell)}
			\arrow[d,"\simeq"]
			\\
			\Pbf\Gamma^1
			\backslash \Pbf G^1(\A_f^{(\ell)})/\Pbf U^{(\ell),1}
			\arrow[r,"\pr_\Tcal^1"]
			&
			\prod_{i=1}^r
			\Pbf\Gamma^{(\ell),1}_i
			\backslash \Pbf G^1(\A_f^{(\ell)})/\Pbf U^{(\ell),1}
		\end{tikzcd}
	\end{equation}
	Here $U^{(\ell),1}=U^{(\ell)}\bigcap G^1(\A_f^{(\ell)})$, and the horizontal maps $\pr_\Tcal$ and $\pr_\Tcal^1$ are the simultaneous projection maps.
	For each $\Tcal_i$, we define a map
	\[
	\widetilde{\Delta}^{\Tcal_i}
	\colon
	\Hcal^{G,\ell}(\Acal_{x_0^{(\ell)}})
	\to
	\prod_{\sigma\in\Tcal_i}
	\Hcal^{G,\ell}(\Acal_{(\sigma x_0)^{(\ell)}}).
	\]
	This corresponds, under the commutative diagram (\ref{commutative diagram for R^G_T}), to the following \emph{twisted} diagonal map:
	\[
	\Delta^{\Tcal_i}\colon
	\Pbf\Gamma\backslash \Pbf G(\A_f^{(\ell)})/\Pbf U^{(\ell)}
	\to
	\prod_{\sigma\in\Tcal_i}
	\Pbf\Gamma_\sigma\backslash \Pbf G(\A_f^{(\ell)})/\Pbf U^{(\ell)},
	\quad
	\overline{g}\mapsto
	(\overline{\sigma^{-1}g})_{\sigma\in\Tcal_i}.
	\]

	By Theorem \ref{limit measure invariant under G'}, we have
	\begin{theorem}\label{T_{a_n(x_0)} equidistribute on Shimura variety}
		Assume the following:
		\begin{enumerate}
			\item
			(\ref{condition on similitude of I_x}) holds;
			
			\item
			$G^1$ is simple and simply connected;
			
			\item
			there exist two primes $\ell_1,\ell_2$ different from $\ell$ such that $G(\Q_{\ell_1})$ and $G(\Q_{\ell_2})$ are both non-compact and, modulo the center, they are both simple and non-commutative;
			
			\item
			set $\Scal=\{\ell\}$, $\Scal'=\{\ell,\ell_1,\ell_2\}$, and $\Gbf_0=\cdots=\Gbf_r=\Pbf I_{x_0}$; then (\ref{G vs G^1 for X}) holds.
		\end{enumerate}
		For any sequence $(a_n)_n$ in $G(\A_f^{(\ell)})$ such that $\deg(a_n)\to+\infty$, the sequence $(T_{a_n}(x_0))_n$ equidistributes on the following subset of $\prod_{\sigma\in\Tcal}\Hcal^{G,\ell}(\Acal_{x_0^{(\ell)}})$:
		\[
		\prod_{i=1}^k\widetilde{\Delta}^{\Tcal_i}(\Hcal^{G,\ell}(\Acal_{x_0^{(\ell)}})).
		\]
	\end{theorem}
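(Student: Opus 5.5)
The proof is a translation: using the commutative diagram \eqref{commutative diagram for R^G_T}, we transport the statement to the adelic double-coset spaces and then apply Theorem \ref{limit measure invariant under G'}.

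Concretely, via $\Theta_{x_0}$ and the reduction square \eqref{commutative diagram for I_x and I_{widetilde{x}}}, the Hecke orbit $T_{a_n}(x_0)$ inside $\Hcal^{G,\ell}(\Acal_{x_0})$ corresponds to the Hecke orbit of the base point in $\Pbf\Gamma\backslash\Pbf G(\A_f^{(\ell)})/\Pbf U^{(\ell)}$. Its image under $\Rbf^G_\Tcal$ is then $\pr_\Tcal$ of that orbit. By assumption (1) and the bijections \eqref{I_x and I_x^1}, \eqref{I_x^1 and PI_x^1}, \eqref{PI_x^1 vs PI_x}, this is identified with the image under $\pr^1_\Tcal$ in
\[
\prod_{i=1}^r\Pbf\Gamma_i^{(\ell),1}\backslash\Pbf G^1(\A_f^{(\ell)})/\Pbf U^{(\ell),1}.
\]
Using the identification $I_{x_0^{(\ell)}}(\Q_{\ell'})\simeq G(\Q_{\ell'})$ of Lemma \ref{I_x(Q_p)=G(Q_p) for supersingular}, we place ourselves in the setting of \S\ref{joining of measures}. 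We take $\Gbf_0=\cdots=\Gbf_r=\Pbf I_{x_0}$, with $\Gbf_i$ embedded via the twist by $\sigma_i$, and set $\Scal=\{\ell\}$ and $\Scal'=\{\ell,\ell_1,\ell_2\}$. The twisted lattices $\Gamma_i=\sigma_i^{-1}\Gamma\sigma_i$ play the role of the $\Gamma_i$ there; since $\sigma_i$ is central in $I_{x_0}(\A_f)$, conjugation by it identifies the spaces compatibly with the twisted diagonal maps $\Delta^{\Tcal_i}$. We then check the hypotheses of \S\ref{joining of measures} and Theorem \ref{limit measure invariant under G'}:
\begin{itemize}
\item $\Gbf_i(\R)$ is compact, by the remark after Lemma \ref{I_x(Q_p)=G(Q_p) for supersingular} (supersingularity).
\item $\Gbf_i^1$ is simple and simply connected, from (2), since $I_{x_0}$ is an inner form of $G$.
\item Non-compactness and simplicity modulo center at $\ell_1,\ell_2$ follow from (3).
\item \eqref{G vs G^1 for X} is assumption (4).
\end{itemize}

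Theorem \ref{Equidisitrbution of Hecke points by Clozel-Oh-Ullmo} then gives equidistribution in each single factor. Theorem \ref{limit measure invariant under G'} gives that every weak limit $\mu$ of the measures $\mu_{\Pi(T_{a_n}(x_0))}$ is the Haar measure on an orbit $Z\Gamma H$ with $HZ=\Gbf^1_\Gamma(\Q_{\Scal_1'})Z$. We upgrade this from $\Q_{\Scal'_1}$ to the full group $\Pbf G^1(\A_f^{(\ell)})$ as follows. Strong approximation for the simply connected $G^1$ with respect to $\{\ell_1,\ell_2\}$ makes $\Gamma\cdot\Gbf^1_\Gamma(\Q_{\Scal_1'})$ act transitively on $\prod_{i=1}^k\Delta^{\Tcal_i}$ of the single space. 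Hence the support of $\mu$ is exactly $\prod_{i=1}^k\Delta^{\Tcal_i}(\Pbf\Gamma\backslash\Pbf G(\A_f^{(\ell)})/\Pbf U^{(\ell)})$, and $\mu$ is the uniform (counting) measure on this finite set. Since this limit is the same for every subsequence, the whole sequence converges, which is the claimed equidistribution on $\prod_i\widetilde{\Delta}^{\Tcal_i}(\Hcal^{G,\ell}(\Acal_{x_0^{(\ell)}}))$.

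The main obstacle is bookkeeping rather than a new idea. The commensurability partition defined via $\Gamma_\sigma$ has to coincide with the partition \eqref{partition of {1,2,...,r}} built from the $\Gamma_i'$ inside $\Gbf_0'(\A_f^\Scal)$. We also have to make sure that passing to $\Pbf$ and to $G^1$ does not change commensurability classes or the counting measure, since the fibres of \eqref{PI_x^1 vs PI_x} have uniform size. To handle this, we first use the central twist by $\sigma_i$ to reduce to comparing lattices in a single group, and then argue that commensurability is insensitive to finite-index and central modifications.
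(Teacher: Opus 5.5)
Your overall route matches the paper's. The paper gives no argument beyond deducing the statement from Theorem~\ref{limit measure invariant under G'} via the diagram \eqref{commutative diagram for R^G_T}. Your strong-approximation step, which passes from the $\{\ell_1,\ell_2\}$-adic space $\bfX$ to the finite double coset spaces, correctly identifies the support of every limit as the product of twisted diagonals.

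\textbf{Main error: the limit is not counting measure.} Theorem~\ref{limit measure invariant under G'} gives normalized Haar measure on the orbit $\Gamma\backslash\Gamma H\subset\bfX$. You must push this forward along the quotient by the compact open $U_{\ell_1}U_{\ell_2}$. The fibre over a double coset $\Gamma gU$ is $\Gamma\backslash\Gamma gU\simeq(g^{-1}\Gamma g\cap U)\backslash U$. Its Haar measure is proportional to $1/\#(g^{-1}\Gamma g\cap U)$ (modulo the center), so points are weighted by the inverse stabilizer orders $c_g^{-1}$.

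This already fails for $r=1$ (Theorem~\ref{Equidisitrbution of Hecke points by Clozel-Oh-Ullmo}). In the Shimura-curve case the weighting on $\Hcal^{G,\ell}(\Acal_{x_0^{(\ell)}})$ is the Eichler mass-formula weighting, not the uniform one, because supersingular points with extra automorphisms have larger stabilizers. The bijection \eqref{PI_x^1 vs PI_x} has no bearing on this: the unevenness comes from the fibres of $\bfX\to\bfX/U_{\ell_1}U_{\ell_2}$, not from that map.

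These weights are exactly what the paper uses when it applies the theorem, in the proof of Theorem~\ref{Galois orbit equidistributes} and in the definition \eqref{c(pi,ell,T)} of $c(\pi,\ell,\Tcal)$. With counting measure those constants would come out wrong. The fix is to state the conclusion as convergence to the push-forward of normalized Haar measure. That measure has full support on $\prod_{i=1}^k\widetilde{\Delta}^{\Tcal_i}(\Hcal^{G,\ell}(\Acal_{x_0^{(\ell)}}))$, as your strong-approximation argument shows.

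\textbf{Smaller slip in the hypothesis check.} Take $\Gbf_i=\Pbf I_{x_0}$ to be the adjoint group, as you and the statement do. Then its derived group is itself and is not simply connected; in the Shimura-curve case it is $\mathrm{PGL}_1$ of a definite quaternion algebra. So assumption (2) does not give the standing hypothesis of \S\ref{joining of measures} for it, and strong approximation fails for it. The joining argument should be run on the simply connected derived group $I^1$ of the supersingular automorphism group. That is also the group to which the strong approximation you invoke actually applies. The result is then transferred to the $\Pbf$-spaces through \eqref{I_x^1 and PI_x^1} and \eqref{PI_x^1 vs PI_x}, which is where assumption (1) enters.
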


	\section{Heegner points on Shimura curves}\label{Heegner points on Shimura curves}
	In this section, we define Heegner points on Shimura curves. We first fix some notation. We fix an imaginary quadratic number field $L$, and an ideal $\Nfrak$ of $\Z$ that is coprime to the discriminant $D_{L/\Q}$ of $L$. We write
	\[
	\epsilon_{L}\colon
	\A_\Q^\times/\Q^\times\to\{\pm1\}
	\]
	for the quadratic character attached to $L/\Q$. We assume the following weak Heegner hypothesis::
	\begin{taggedtheorem}{(wH)}\label{weak Heegner hypothesis}
		$\epsilon_{L}(\Nfrak)=1$.
	\end{taggedtheorem}
	Under Hypothesis~\ref{weak Heegner hypothesis}, there is a unique \emph{indefinite} quaternion algebra $B$ over $\Q$ that is ramified exactly at the primes $p$ of $\Q$ that are inert in $L$ with $\mathrm{ord}_{p}(\Nfrak)$ odd. We write $\Nfrak_B$ for the product of rational primes where $B$ is ramified. We fix an isomorphism
	\[
	B\otimes_\Q\R
	\simeq
	\mathrm{M}_2(\R).
	\]
	We view $G=B^\times$ as an algebraic group over $\Q$. We write $\Hfrak^\pm=\C\setminus\R$ for the union of the upper and lower half-planes. We let $G(\R)\simeq\GL_2(\R)$ act on $\Hfrak^\pm$ by fractional linear transformations. We then write $U_\infty$ for the stabilizer in $G(\R)$ of $i\in\Hfrak^\pm$. Thus we have an identification
	\[
	G(\R)/U_\infty\simeq\Hfrak^\pm.
	\]
	We have a section of the natural projection map $G(\R)\to\Hfrak^\pm$:
	\[
	s\colon
	\Hfrak^\pm\to  G(\R),
	\quad
	x+iy\mapsto
	\begin{pmatrix}
		y & x \\ 0 & 1
	\end{pmatrix}.
	\]
	
	By construction, for any place $v$ of $\Q$, if $B$ is ramified at $v$, then $L\otimes_\Q\Q_v$ is a field. Thus we have an embedding of $\Q$-algebras
	\[
	\kappa
	\colon
	L\hookrightarrow B.
	\]
	Moreover, there is a unique point $w(\kappa)\in\Hfrak^\pm$ that is fixed by $\kappa(x)$ for any $x\in L^\times$. We normalize $\kappa$ in such a way that for any $x\in L^\times$, we have
	\[
	\kappa(x)
	\begin{pmatrix}
		w(\kappa) \\ 1
	\end{pmatrix}
	=
	x
	\begin{pmatrix}
		w(\kappa) \\ 1
	\end{pmatrix},
	\]
	where on the left $\kappa(x)$ is viewed as a matrix in $\GL_2(\R)$ while on the right $x$ is viewed as a scalar in $\C^\times=(L\otimes_\Q\R)^\times$ (\cite[§1.2]{Howard2004b}).

	We fix a maximal order $\Ocal_B$ of $B$ containing $\kappa(\Ocal_L)$ and write $R=\kappa(\Ocal_L)+\kappa(\Nfrak)\Ocal_B$. Then $R$ has reduced discriminant equal to $\Nfrak$. Under the identification $B(\A_{\Q,f})^\times= G(\A_{\Q,f})$, we write $U$ for the image of $(R\otimes_\Z\widehat{\Z})^\times$ in $ G(\A_{\Q,f})$. The complex Shimura curve of level $U$ is given by
	\begin{align*}
		Sh_U(\C)
		&
		=
		G(\Q)\backslash  G(\A_\Q)/Z_ G(\A_\Q)U_\infty U
		\bigsqcup\{\text{cusps}\}
		\\
		&
		=
		G(\Q)\backslash
		\left(
		\Hfrak^\pm\times  G(\A_{\Q,f})/Z_ G(\A_{\Q,f})U_f
		\right)
		\bigsqcup\{\text{cusps}\}.
	\end{align*}
	It has a canonical model $ Sh_{U,\Q}$ over $\Q$, which is smooth, connected, and projective. The set of cusps is empty if and only if $B=\mathrm{M}_2(\Q)$. It is well known that $(G,\Hfrak^\pm)$ is a Shimura datum of Hodge type (\cite{Mumford1969}).

	For $(z,g)\in\Hfrak^\pm\times G(\A_{\Q,f})$, we write $[(z,g)]$ for its image in $Sh_{U}(\C)$. We let the normalizer of $U$ in $G(\A_{\Q,f})$ act on $Sh_{U}(\C)$ by the formula
	\[
	J(u)[(z,g)]:=[(z,gu^{-1})].
	\]
	Write $\Nrm\colon G(\A_\Q)\to\A_\Q^\times$ for the reduced norm. Then the set $\pi_0(Sh_{U}(\C))$ of connected components of $Sh_{U}(\C)$ can be identified with $\Q^\times\backslash\A_\Q^\times/\Nrm(Z_ G(\A_\Q)U_\infty U)$. Write $\Q_U/\Q$ for the abelian extension such that, under Artin reciprocity, there is an isomorphism
	\[
	\Gal(\Q_U/\Q)\simeq
	\Q^\times\backslash\A_\Q^\times/\Nrm(Z_ G(\A_\Q)U_\infty U).
	\]
	In what follows, we identify these two groups and define a map
	\[
	\GNrm\colon
	G(\A_{\Q,f})\to\Gal(\Q_U/\Q),
	\quad
	g\mapsto\Nrm(g)^{-1}.
	\]
	We let $\Gal(\Q_U/\Q)$ act on $\pi_0(Sh_{U}(\C))$ via $\GNrm$ (\cite[§1.2]{Howard2004b}):
	\[
	\begin{tikzcd}
		Sh_{U}(\C)
		\arrow[r,"J(u)"]
		\arrow[d,"\Nrm"']
		&
		Sh_{U}(\C)
		\arrow[d,"\Nrm"]
		\\
		\pi_0( Sh_{U}(\C))
		\arrow[r,"\GNrm(u)"]
		&
		\pi_0( Sh_{U}(\C))
	\end{tikzcd}
	\]

	We view $T=L^\times$ as an algebraic group over $\Q$, which is also a maximal torus of $G$ via the embedding $\kappa$. Then we put
	\[
	\CM_L=T(\Q)\backslash G(\A_{\Q,f})/Z_ G(\A_{\Q,f}),
	\]
	the set of CM points for $L$. We have a map
	\[
	\CM_L\to Sh_{U}(\C),
	\quad
	g\mapsto[(w(\kappa),g)].
	\]
	The image of this map is the set of CM points on $Sh_{U}(\C)$. We let $\Gal(L^\ab/L)$ act on the set of CM points by the formula
	\begin{equation}\label{Gal(L^ab/L) acts on CM_L}
		[(w(\kappa),g)]^{[s,L]}:=[(w(\kappa),s\cdot g)],
		\quad
		\forall
		s\in T(\A_{\Q,f}).
	\end{equation}
	Here $[-,L]\colon T(\Q)\backslash T(\A_{\Q,f})\simeq\Gal(L^\ab/L)$ is the Artin map. It is easy to see that $Z_ G(\A_{\Q,f})=\A_{\Q,f}^\times$ acts trivially on any CM point.

	For any CM point $x=[(w(\kappa),g)]$, the endomorphism ring of $x$ is the preimage of $R\otimes_\Z\widehat{\Z}$ under the map $L\to B\otimes_\Z\widehat{\Z}$ sending $h$ to $g^{-1}hg$. This endomorphism ring is an order of $L$ of the form $\Ocal_{\Cfrak_x}=\Z+\Cfrak_x\Ocal_L$ for some integral ideal $\Cfrak_x\subset\Z$ (the conductor of $x$). We write $T[\Cfrak_x]=\kappa(\widehat{\Ocal}_{\Cfrak_x}^\times)\subset T(\A_{\Q,f})$. We write $L[\Cfrak]$ for the ring class field of $L$, i.e., the abelian extension of $L$ corresponding to $T[\Cfrak]Z_ G(\A_{\Q,f})$ via class field theory. This is a Galois extension of $\Q$, and it is the field of definition of $x$.

	For each rational prime $\ell$ coprime to $\Nfrak D_{L/\Q}$, we fix an isomorphism
	\begin{equation}\label{R_ell and M_2(O_{F,ell})}
		R_\ell\simeq\mathrm{M}_2(\Z_\ell)
	\end{equation}
	such that $\kappa(\Ocal_{L,\ell})$ is identified with the set of diagonal matrices $\begin{pmatrix}
		x & 0 \\ 0 & y
	\end{pmatrix}$, respectively $\begin{pmatrix}
		x & yu_\ell \\ y & x
	\end{pmatrix}$ for $x,y\in\Z_\ell$, if $\ell$ splits in $L$, respectively if $\ell$ is inert in $L$. Here $u_\ell$ is a fixed non-square element in $\Z_\ell^\times$.
	This extends to an isomorphism $B_\ell\simeq\mathrm{M}_2(F_\ell)$. Fix a uniformizer $\varpi_\ell$ of $\Q_\ell$, and for each integer $k\ge0$, define an element $h[\ell^k]$ in $B_\ell$ whose image in $\mathrm{M}_2(\Q_\ell)$ is given by
	\[
	\begin{cases*}
		\begin{pmatrix}
			\varpi_\ell^k & 1 \\ 0 & 1
		\end{pmatrix},
		&
		if $\ell$ splits in $L$;
		\\
		\begin{pmatrix}
			\varpi_\ell^k & 0 \\ 0 & 1
		\end{pmatrix},
		&
		if $\ell$ is inert in $L$.
	\end{cases*}
	\]
	We view $h[\ell^k]$ as elements in $G(\A_{\Q,f})$ via the embedding $G(\Q_\ell)\hookrightarrow G(\A_{\Q,f})$. For any positive integer $n=\prod_{i=1}^r\ell_i^{k_i}$ coprime to $\Nfrak D_{L/\Q}$, we define
	\[
	h[n]:=\prod_{i=1}^rh[\ell_i^{k_i}].
	\]
	We view $h[n]$ also as elements in $\CM_L$ and write $x[n]$ for its image in $ Sh_{U}(\C)$.
	It is well known that these $x[n]$ are defined over the maximal abelian extension $L^\ab$ of $L$. Moreover, the action of $\Gal(L^\ab/L)$ on these $x[n]$ agrees with the action in (\ref{Gal(L^ab/L) acts on CM_L}) (\cite[Theorem 1.2.2]{Howard2004b}).

	We decompose $Sh_{U,\Q}\times_\Q\Q_U$ into geometric components $Sh_{U,\Q}\times_\Q\Q_U=\bigsqcup_{i} Sh_{U,\Q,i}$, with each $Sh_{U,\Q,i}$ geometrically irreducible. Moreover, for any extension $F/\Q$ such that $Sh_{U,\Q}(F)\neq\emptyset$, we must have $\Q_U\subset F$. We define
	\[
	J_{U,\Q}:=\Res^{\Q_U}_\Q(\mathrm{Jac}( Sh_{U,\Q,i_0}))
	\]
	for some fixed component $ Sh_{U,\Q,i_0}$. Here $\mathrm{Jac}(Sh_{U,\Q,i_0})$ is the Jacobian of $Sh_{U,\Q,i_0}$. Then $J_{U,\Q}$ is an abelian variety defined over $\Q$, which has good reduction away from $\Nfrak$. There is a unique element $\hfrak\in\mathrm{Pic}(Sh_{U,\Q})$ (up to a constant multiple) whose degree on each geometric component $Sh_{U,\Q,i}$ is constant and which satisfies $T_{a(m)}\hfrak=\degrm(T_{a(m)})\hfrak$ for every Hecke operator $T_{a(m)}$ with a positive integer $m$ coprime to $\Nfrak D_{L/\Q}$. This is called the Hodge class, and on each geometric component $\hfrak$ is just the canonical divisor. We write $\hfrak_i$ for the restriction of $\hfrak$ to the component $Sh_{U,\Q,i}$; then we have a unique morphism defined over $\Q$
	\[
	Sh_{U,\Q}\to J_{U,\Q},
	\]
	which, on complex points, takes $x_i\in  Sh_{U,\Q}(\C)$ to the divisor $dx_i-\hfrak_i\in J_{U,\Q}(\C)$.

	Let $A/\Q$ be an abelian variety with a surjective morphism of abelian varieties $\alpha\colon J_{U,\Q}\to A$. Composed with the map $ Sh_{U,\Q}\to J_{U,\Q}$, we have a morphism over $\Q$
	\[
	\pi\colon Sh_{U,\Q}\to J_{U,\Q}\to A.
	\]
	Since $ Sh_{U,\Q}$ is complete and connected, the morphism $\pi$ is either finite or constant, the latter of which is impossible since $\alpha$ is surjective. Thus $\pi$ is a finite morphism (\emph{cf.} \cite[Lemma 3.9]{CornutVatsal2007}). We write
	\[
	y[n]=\pi(x[n])
	\in
	A(L[n]).
	\]

	As in the introduction, we write $L[\infty]=\bigcup_kL[k]$.

	\section{Relation between Hecke orbits and Galois orbits}\label{Hecke orbit and Galois orbit}
	In this section, we study the relationship between Hecke orbits and Galois orbits.
	
	We keep the notation as in §\ref{Heegner points on Shimura curves}. We fix a prime $\ell$ ($\Scal=\{\ell\}$ in the notation of §\ref{Hecke operators}) that is inert in $L$, and a place $v(\ell)$ of $\overline{\Q}$ over $\ell$. We fix the point $x_0=x[1]\in Sh_{U,\Q}(\overline{\Q})$ and assume that it has \emph{supersingular} good reduction at $v(\ell)$. We fix a non-empty finite subset $\Tcal=\{\sigma_1,\cdots,\sigma_r\}$ of
	\[
	Z_{I_{x_0}}(\Q)\backslash Z_{I_{x_0}}(\A_f)=I_{x_0}(\Q)\backslash I_{x_0}(\A_f)=L^\times\backslash \widehat{L}^\times.
	\]
	We identify $\Tcal$ with its image in $\Gal(L[\infty]/L)$ via the Artin reciprocity map:
	\begin{equation}\label{Artin map}
		\Art
		\colon
		\Gal(L[\infty]/L)
		\simeq
		\widehat{L}^\times/L^\times.
	\end{equation}
	We have a partition $\Tcal=\bigsqcup_{i=1}^k\Tcal_i$ as in §\ref{Shimura varieties of Hodge type}. We set $\Gbf_0=I_{x_0^{(\ell)}}$, an algebraic group over $\Q$.
	We fix isomorphisms
	\begin{equation}\label{G_0(Z_q)=R_q}
		\Gbf_0(\Z_q)
		\simeq
		G(\Z_q),
		\quad
		q\neq\ell.
	\end{equation}

	Consider a positive integer $n$ coprime to $\ell\Nfrak D_{L/\Q}$. We first consider Galois orbits. There are the following cases:
	\begin{enumerate}		
		\item 
		$n=q^e$ is a prime power with $q$ a prime number that splits in $L$. We fix a set $S_n$ of representatives in $\Z_q^\times$ of the quotient set $(\Z_q/n\Z_q)^\times$. Note that $S_n$ can be viewed as a set of representatives of the quotient $(\Ocal_{L,q}/n\Ocal_{L,q})^\times/(\Z_q/n\Z_q)^\times$ because $\Ocal_{L,q}\simeq\Z_q^2$ as $\Z_q$-algebras. Moreover, $\#S_n=q^{e-1}(q-1)$. Under the identification (\ref{R_ell and M_2(O_{F,ell})}), we define a subset $\Gfrak_n$ of $\Gal(L[\infty]/L)$ such that under the Artin map (\ref{Artin map}), we have
		\[
		\Art(\Gfrak_n)
		=
		L^\times\cdot
		\left\{
		\begin{pmatrix}
			a & 0 \\ 0 & 1
		\end{pmatrix}|a\in S_n
		\right\}.
		\]

		\item 
		$n=q^e$ is a prime power with $q$ a prime number that is inert in $L$. We fix a set $S_n$ of representatives in $\Ocal_{L,q}^\times$ of the quotient $(\Ocal_{L,q}/n\Ocal_{L,q})^\times/(\Z_q/n\Z_q)^\times$. Moreover, $\#S_n=q^{e-1}(q+1)$. We define a subset $\Gfrak_n$ of $\Gal(L[\infty]/L)$ such that
		\[
		\Art(\Gfrak_n)
		=
		L^\times\cdot S_n.
		\]
		Explicitly, we can take $S_n$ to be (under the identification (\ref{R_ell and M_2(O_{F,ell})}))::
		\[
		S_n
		=
		\left\{
		\begin{pmatrix}
			1 & bu_q \\ b & 1
		\end{pmatrix}|b=1,\cdots,q^e
		\right\}
		\bigsqcup
		\left\{
		\begin{pmatrix}
			pau_q & u_q \\ 1 & pau_q
		\end{pmatrix}|a=1,\cdots,q^{e-1}
		\right\}.
		\]

		\item 
		In general, for a prime factorization $n=\prod_{i=1}^sp_i^{e_i}$, we define $S_n=\prod_{i=1}^sS_{p_i^{e_i}}$, viewed as a subset of $\widehat{\Ocal}_L^\times$. We then define a subset $\Gfrak_n$ of $\Gal(L[\infty]/L)$ such that
		\[
		\Art(\Gfrak_n)
		=
		L^\times\cdot S_n.
		\]
	\end{enumerate}

	We next consider Hecke orbits. As above, there are the following cases::
	\begin{enumerate}
		\item 
		$n=q^e$ is a prime power. We define
		\[
		a(n)
		:=
		\begin{pmatrix}
			n & 0 \\ 0 & 1
		\end{pmatrix}
		\in
		\GL_2(\Q_q)
		\simeq
		G(\Q_q)
		\hookrightarrow
		G(\A_f^{(\ell)}).
		\]
		By definition, the double coset decomposes into right cosets
		\[
		G(\Z_q)a(n)G(\Z_q)
		=
		\bigsqcup_{g\in S_n'}a(n)G(\Z_q),
		\]
		where $S_n'$ is a set of representatives in $G(\Z_q)$ of the quotient $G(\Z_q)/(G(\Z_q)\bigcap a(n)G(\Z_q)a(n)^{-1})$. Note that $\#S_n'=q^{e-1}(q+1)=\deg(a(n))$ (\cite[Theorem 3.24]{Shimura1971}). We have
		\begin{lemma}\label{Galois orbit contained in Hecke orbit}
			For any distinct $g,g'\in S_n$, the right cosets
			$gh[n]G(\Z_q)$ and $g'h[n]G(\Z_q)$ are distinct right cosets in the decomposition $G(\Z_q)a(n)G(\Z_q)$ (here we use the identifications (\ref{G_0(Z_q)=R_q})).
		\end{lemma}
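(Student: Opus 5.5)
The plan is to prove two things: each coset $gh[n]G(\Z_q)$ with $g\in S_n$ lies in $G(\Z_q)a(n)G(\Z_q)$, and two such cosets coincide only if $g=g'$. Throughout, $n=q^e$, and we work inside $G(\Q_q)\simeq\GL_2(\Q_q)$ via (\ref{R_ell and M_2(O_{F,ell})}) and (\ref{G_0(Z_q)=R_q}). Under these identifications $S_n\subset\kappa(\Ocal_{L,q}^\times)\subset G(\Z_q)$.

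\emph{Step 1: containment in the double coset.} Since $g\in G(\Z_q)$, it suffices to show $h[n]\in G(\Z_q)a(n)G(\Z_q)$.
\begin{itemize}
\item In the inert case this is immediate, because $h[n]=a(n)$.
\item In the split case, $h[n]=\begin{pmatrix} q^e&1\\0&1\end{pmatrix}$ is integral with determinant $q^e$, and the gcd of its entries is $1$. By the elementary divisor theorem it therefore lies in $\GL_2(\Z_q)\,\mathrm{diag}(q^e,1)\,\GL_2(\Z_q)$.
\end{itemize}
Consequently each $gh[n]G(\Z_q)$ is one of the right cosets $\gamma a(n)G(\Z_q)$ in the decomposition.

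\emph{Step 2: reduction of distinctness to a conductor computation.} We have $gh[n]G(\Z_q)=g'h[n]G(\Z_q)$ if and only if $t:=g^{-1}g'$ satisfies $h[n]^{-1}t\,h[n]\in G(\Z_q)$. Since $t\in\kappa(\Ocal_{L,q}^\times)$, the claim reduces to
\[
\kappa(\Ocal_{L,q}^\times)\cap h[n]G(\Z_q)h[n]^{-1}=\kappa\bigl((\Z_q+n\Ocal_{L,q})^\times\bigr).
\]
This is the local form of the statement that $x[n]$ has conductor $n$, and I would verify it by direct matrix computation in each case.
\begin{itemize}
\item \emph{Split case.} Write $t=\mathrm{diag}(u,v)$. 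Then
\[
h[n]^{-1}t\,h[n]=\begin{pmatrix} u&(u-v)/n\\0&v\end{pmatrix},
\]
which is integral if and only if $u\equiv v \pmod n$. This is exactly the condition $t\in\Z_q+n\Ocal_{L,q}$.
\item \emph{Inert case.} Write $t=\begin{pmatrix} x&yu_q\\y&x\end{pmatrix}$. Then
\[
a(n)^{-1}t\,a(n)=\begin{pmatrix} x&yu_q/n\\ yn&x\end{pmatrix},
\]
which is integral if and only if $n\mid y$, i.e.\ $t\in\Z_q+n\Ocal_{L,q}$.
\end{itemize}

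\emph{Step 3: conclusion.} If $t=g^{-1}g'\in(\Z_q+n\Ocal_{L,q})^\times$, then the images of $g$ and $g'$ in $(\Ocal_{L,q}/n\Ocal_{L,q})^\times/(\Z_q/n\Z_q)^\times$ coincide. Since $S_n$ is a set of representatives of this quotient, this forces $g=g'$. In the split case the elements of $S_n$ are $\mathrm{diag}(a,1)$, so the quotient is just $(\Z_q/n)^\times$ via $a$, and the computation above says directly that $a\equiv a'\pmod n$.

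\emph{Main obstacle.} The only real point of care is bookkeeping: one must check that the explicit representatives listed for $S_n$ (notably in the inert case) really are pairwise inequivalent modulo $\Z_q^\times(1+n\Ocal_{L,q})$, and that the identification (\ref{G_0(Z_q)=R_q}) is compatible with the embedding $\kappa$ used in (\ref{R_ell and M_2(O_{F,ell})}). I would settle this first by reducing modulo $q^e$ and counting: there are $q^{e-1}(q+1)=\#S_n$ classes in the inert case and $q^{e-1}(q-1)$ in the split case. As a by-product, the counting also shows that the Galois cosets fill all of $S_n'$ in the inert case and a proportion $\frac{q-1}{q+1}$ in the split case.
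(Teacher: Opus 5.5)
Your proposal is correct and follows essentially the paper's route. Containment comes from the elementary-divisor (Cartan) description of $G(\Z_q)a(n)G(\Z_q)$, and distinctness from showing that the $(1,2)$-entry of $(gh[n])^{-1}g'h[n]$ is not integral. Your identity $\kappa(\Ocal_{L,q}^\times)\cap h[n]G(\Z_q)h[n]^{-1}=\kappa((\Z_q+n\Ocal_{L,q})^\times)$ simply spells out the step the paper leaves as ``by definition of $S_n$, one checks that $b/n\notin\Z_q$.''
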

		\begin{proof}
			We first show that the two cosets are distinct.
			If $q$ splits in $L$, write $g=\begin{pmatrix}
				a & 0 \\ 0 & 1
			\end{pmatrix}$ and $g'=\begin{pmatrix}
				a' & 0 \\ 0 & 1
			\end{pmatrix}$. Then we have
			\[
			gh[n]G(\Z_q)
			=
			\begin{pmatrix}
				a\varpi_q^e & a \\ 0 & 1
			\end{pmatrix}
			G(\Z_q)
			=
			\begin{pmatrix}
				n & a \\ 0 & 1
			\end{pmatrix}
			G(\Z_q),
			\quad
			g'h[n]G(\Z_q)
			=
			\begin{pmatrix}
				n & a' \\ 0 & 1
			\end{pmatrix}
			G(\Z_q).
			\]
			Since 
			\[
			\begin{pmatrix}
				n & a \\ 0 & 1
			\end{pmatrix}^{-1}
			\begin{pmatrix}
				n & a' \\ 0 & 1
			\end{pmatrix}
			=
			\begin{pmatrix}
				1 & (a'-a)/n \\ 0 & 1
			\end{pmatrix}\notin G(\Z_q),
			\]
			these right cosets are indeed distinct.

			If $q$ is inert in $L$, write
			$g^{-1}g'=\begin{pmatrix}
				a & b \\ c & d
			\end{pmatrix}$. By definition of $S_n$, one checks that $b/n\notin\Z_q$. Then we have
			\[
			(gh[n])^{-1}(g'h[n])
			=
			\begin{pmatrix}
				a & b/n \\ cn & d
			\end{pmatrix}\notin G(\Z_q).
			\]
			It follows again that these two right cosets are distinct.

			Next we show that both cosets lie in $G(\Z_q)a(n)G(\Z_q)$. By the Cartan decomposition for $G(\Q_q)$, we can characterize $G(\Z_q)a(n)G(\Z_q)$ as the subset of $G(\Q_q)$, consisting of matrices $A=\begin{pmatrix}
				a & b \\ c & d
			\end{pmatrix}$ with $a,b,c,d\in\Z_q$ such that $\det(A)\in n\Z_q^\times$ and $a,b,c,d$ are not simultaneous divisible by $q$. Using this criterion, it follows easily that $gh[n]G(\Z_q)$ and $g'h[n]G(\Z_q)$ both lie in $G(\Z_q)a(n)G(\Z_q)$.
		\end{proof}
		In particular, if $q$ is inert in $L$, then we can take $S_n'$ to be $S_n$; if $q$ splits in $L$, we can take $S_n'$ to contain $S_n$.

		\item 
		In general, for a prime factorization $n=\prod_{i=1}^sp_i^{e_i}$, we have (\cite[Theorem 3.24]{Shimura1971})
		\[
		T_{a(n)}
		=
		\prod_{i=1}^sT_{a(p_i^{e_i})}.
		\]
		It follows from the previous case that for any distinct $g,g'\in S_n$, the right cosets $gh[n]U$ and $g'h[n]U$ are distinct, and both lie in $Uh[n]U$ (note that for any $p_i$, $U_{p_i}=G(\Z_{p_i})$ under the identification (\ref{G_0(Z_q)=R_q}) because $n$ is coprime to $\ell\Nfrak D_{L/\Q}$).
	\end{enumerate}

	For a positive integer $n=\prod_{i=1}^sp_i^{e_i}$ coprime to $\ell\Nfrak D_{L/\Q}$, we define a rational number
	\[
	d(n):=\frac{\#S_n}{\#S_n'}=\frac{\#S_n}{\deg(a(n))}\in(0,1].
	\]
	In particular, $d(n)=1$ if and only if all the prime factors $p_i$ are inert in $L$.

	In summary, we have obtained::
	\begin{theorem}\label{theorem on Hecke orbit vs Galois orbit}
		For any $n$ coprime to $\ell\Nfrak D_{L/\Q}$, the Hecke orbit $T_n(x[1]^{(\ell)})$ contains the Galois orbit $\Gfrak_n(x[n]^{(\ell)})$. Moreover, we have $d(n)=\frac{\#\Gfrak_n}{\deg(a(n))}$.
	\end{theorem}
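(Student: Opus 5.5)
The plan is to work at the level of adelic double cosets, where both orbits are explicit, and then pass to reductions mod $v(\ell)$ via the compatibility of reduction with prime-to-$\ell$ Hecke translates. Concretely, $x[1]$ corresponds to $[(w(\kappa),h[1])]$ and, under \eqref{Gal(L^ab/L) acts on CM_L}, each $\nu\in\Gfrak_n$ with $\Art(\nu)=s\in S_n$ acts by $x[n]^\nu=[(w(\kappa),s\,h[n])]$. On the other hand, the Hecke orbit is
\[
T_{a(n)}(x[1])=\{[(w(\kappa),h[1]\gamma)] : \gamma\in U a(n) U/U\}.
\]
Note that $h[1]$ is trivial at all primes dividing $n$ (it is the identity if inert, and a unipotent in $G(\Z_q)$ if split). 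So the task is to show that each $s\,h[n]$ represents a coset in $h[1]U a(n)U$. This reduces prime by prime to the local statement at $q\mid n$, using $T_{a(n)}=\prod_i T_{a(p_i^{e_i})}$ and the fact that $S_n$ and $h[n]$ are products of their local components. Away from $n$, $s\,h[n]$ and $h[1]$ agree modulo $U$.

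The local input is exactly Lemma \ref{Galois orbit contained in Hecke orbit}. For each prime power $q^e\| n$ and distinct $g,g'\in S_{q^e}$, the cosets $gh[q^e]G(\Z_q)$ and $g'h[q^e]G(\Z_q)$ are distinct and lie in $G(\Z_q)a(q^e)G(\Z_q)$. Combined over all $p_i$, this shows that the map
\[
S_n\to Uh[n]U/U,\qquad s\mapsto s\,h[n]U,
\]
is injective with image in the set of cosets defining $T_{a(n)}$, as noted after the lemma. Hence $\Gfrak_n(x[n])\subset T_{a(n)}(x[1])$ in characteristic zero, with $\#\Gfrak_n(x[n])=\#S_n$ distinct cosets.

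To pass to $\ell$, I will invoke the commutative diagram \eqref{commutative diagram for I_x and I_{widetilde{x}}}. Since $n$ is prime to $\ell$, all the points in question lie in $\Hcal^{G,\ell}(\Acal_{x_0})$, and reduction $\Rbf_\ell$ commutes with the prime-to-$\ell$ Hecke action through $\Theta$. Therefore the reduction of $T_{a(n)}(x[1])$ equals $T_{a(n)}(x[1]^{(\ell)})$, and the containment descends: $\Gfrak_n(x[n]^{(\ell)})\subset T_n(x[1]^{(\ell)})$. Here both orbits are counted as multisets indexed by cosets, which is how $\mu_{\Pi(T_a(x))}$ is defined.

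The equality $d(n)=\#\Gfrak_n/\deg(a(n))$ is then just the definition $d(n)=\#S_n/\#S_n'$, once $\#\Gfrak_n=\#S_n$ is known. This holds because distinct elements of $S_n$ give distinct classes in $L^\times\backslash\widehat L^\times$: their cosets $s\,h[n]U$ are distinct by the lemma, so they cannot differ by $L^\times\cdot\widehat{\Ocal}_{n}^\times$. The degree formula $\deg(a(n))=\prod_i p_i^{e_i-1}(p_i+1)$ comes from \cite[Theorem 3.24]{Shimura1971}.

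I expect the main obstacle to be keeping the bookkeeping consistent, rather than any deep input. The Galois action is a left multiplication by $T(\A_f)$, while Hecke translates are right multiplications. One must check that the left translate $s\,h[n]$ really is $h[1]$ times an element of $U a(n)U$ (locally at $q$, $s\in G(\Z_q)$, so $s\,h[q^e]\in G(\Z_q)a(q^e)G(\Z_q)$ by the Cartan criterion). One must also check that the identifications \eqref{G_0(Z_q)=R_q} are compatible with $\Theta_{x_0^{(\ell)}}$, so that the statement really is about the reduced points.
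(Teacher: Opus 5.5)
Your proposal is correct and is essentially the paper's argument: Lemma~\ref{Galois orbit contained in Hecke orbit} (the Cartan criterion for $G(\Z_q)a(q^e)G(\Z_q)$ together with distinctness of the cosets $s\,h[q^e]G(\Z_q)$), the factorization $T_{a(n)}=\prod_i T_{a(p_i^{e_i})}$, and the definition $d(n)=\#S_n/\#S_n'$; your appeal to the compatibility of reduction at $v(\ell)$ with prime-to-$\ell$ Hecke translates makes explicit a step the paper leaves implicit.

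One small correction concerns $\#\Gfrak_n=\#S_n$. This does not follow from distinctness of the cosets $s\,h[n]U$, since replacing $s$ by $\lambda s$ with $\lambda\in L^\times$ changes the coset; indeed, for $L=\Q(i),\Q(\sqrt{-3})$, distinct elements of $S_n$ can agree modulo $L^\times\widehat{\Ocal}_n^\times$. It is nevertheless immediate directly: if $s'/s=\lambda u$ with $\lambda\in L^\times$ and $u\in\widehat{\Z}^\times$, then $\lambda\in\Ocal_L^\times$, and evaluating at a prime $p\nmid n$ gives $\lambda=u_p^{-1}\in\Q$, so $\lambda=\pm1$ and $s'_q/s_q\in\Z_q^\times$ for every $q\mid n$, which forces $s=s'$ by the choice of $S_n$.
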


	\section{Variants of Mazur's conjecture-1}\label{Mazur's conjecture-1}
	Recall that we have fixed a moduli parametrization $\pi\colon Sh_{U,\Q}\to A$. This is a finite morphism (\emph{cf.} \cite[Theorem 3.9]{CornutVatsal2007} or \cite[§9]{Zhang2025}). By \cite[Lemma 4.1]{Cornut2002}, we know that $A(E[\infty])_\mathrm{tors}$ is finite, say, of cardinality $t_1$. We write $t_2$ for an upper bound for the cardinalities of the fibers of the map $\pi\colon Sh_{U}\to A$, which is finite since $\pi$ is a finite morphism.

	We write $Sh_{U}^\ssrm(\F_{v(\ell)})$ for the supersingular locus of $Sh_{U}(\F_{v(\ell)})$, where $\F_{v(\ell)}=\overline{\F}_\ell$ is the residue field of $\Ocal_{\overline{\Q}}$ at the place $v(\ell)$. Then we have bijections (\cite[Corollary 3.13 \& §3.2.5]{CornutVatsal2005})
	\[
	Sh_{U}^\ssrm(\overline{\F}_\ell)
	\simeq
	I_{x_0}(\Q)\backslash G(\A_f^{\ell})U/U
	\simeq
	\Hcal^{G,\ell}(\Acal_{x[1]^{(\ell)}}).
	\]
	By Eichler's mass formula, we know that as $\ell\to +\infty$, $\#Sh_{U}^\ssrm(\overline{\F}_\ell)\to +\infty$. We choose $\ell$ \emph{inert} in $L$ such that
	\begin{equation}\label{cardinal of Sh_U^ss}
		\#Sh_U^\ssrm(\overline{\F}_\ell)>(t_1\times r!)^{2\dim(A)}\times t_2.
	\end{equation}

	In what follows, we fix $x_0=x[1]$ and assume that $x[1]$ has \emph{supersingular} reduction $x[1]^{(\ell)}$ at $v(\ell)$. For each element $g\in I_{x[1]}(\Q)\backslash G(\A_f^{(\ell)})U$, we write $c_g=c_g(\ell)$ for the cardinality of the stabilizer of $g$ in $U$ (where $U$ acts on the right on $I_{x[1]^{(\ell)}}(\Q)\backslash G(\A_f^{(\ell)})U$). This is finite because $I_{x[1]^{(\ell)}}(\R)$ is compact modulo its center. Choose a set of representatives $\{g_1,\cdots,g_s\}$ of $I_{x[1]^{(\ell)}}(\Q)\backslash G(\A_f^{(\ell)})U/U$ and set
	\begin{equation}\label{c(pi,ell,T)}
		c(\pi,\ell,\Tcal):=\min_i
		\left(\frac{c_{g_i}^{-1}}{\sum_{j=1}^sc_{g_j}^{-1}}\right)^r
		\in(0,1).
	\end{equation}
	Here $k$ is the cardinal of the partition $\Tcal=\bigsqcup_{i=1}^k\Tcal_i$ as in §\ref{Shimura varieties of Hodge type}.

	We have the following result.
	\begin{theorem}\label{Galois orbit equidistributes}
		Let $\ell$ and $\pi\colon Sh_U\to A$ be as above. For any integer $n\gg1$ coprime to $\ell\Nfrak D_{L/\Q}$ with $1-d(n)<c(\pi,\ell,\Tcal)$, we have
		\[
		\Rbf_{\Tcal}(\{\sigma(x[n])|\sigma\in \Gfrak_n\})
		=
		\prod_{i=1}^k\widetilde{\Delta}^{\Tcal_i}(Sh_U^\ssrm(\overline{\F}_\ell)).
		\]

	\end{theorem}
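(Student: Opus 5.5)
The plan is to combine Theorem \ref{theorem on Hecke orbit vs Galois orbit} with the equidistribution statement Theorem \ref{T_{a_n(x_0)} equidistribute on Shimura variety}, and to use the threshold $c(\pi,\ell,\Tcal)$ to pass from \emph{equidistribution} of Hecke orbits to \emph{surjectivity} of the Galois orbit onto the limiting support.

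The first step is the inclusion ``$\subset$''. Every $\sigma\in\Gfrak_n$ acts on $x[n]$ through right multiplication by elements of $T(\A_f)$. Each $\sigma(x[n])$ lies in the Hecke orbit $T_{a(n)}(x[1])$ by Theorem \ref{theorem on Hecke orbit vs Galois orbit}. Its simultaneous reduction $(\tau\sigma(x[n]))^{(\ell)}_{\tau\in\Tcal}$ is therefore the image under $\pr_\Tcal$ of a point of $\Pbf\Gamma\backslash\Pbf G(\A_f^{(\ell)})/\Pbf U^{(\ell)}$, as in the diagram (\ref{commutative diagram for R^G_T}). Here the twist by $\tau\in Z_{I_{x_0}}(\A_f)$ is central, so it becomes conjugation of $\Gamma$ by $\tau$. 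For $\tau,\tau'$ in the same block $\Tcal_i$, I expect to use the commensurability of $\Gamma_\tau$ and $\Gamma_{\tau'}$, together with the description of the limit in Theorem \ref{limit measure invariant under G'} (the component $g$ can be taken diagonal on each $\Tcal_i$). This should show that the $\Tcal_i$-components lie on the twisted diagonal $\widetilde{\Delta}^{\Tcal_i}$. Put differently, the whole image of $\pr_\Tcal$ restricted to the Hecke orbit lies in $\prod_i\widetilde{\Delta}^{\Tcal_i}(Sh_U^\ssrm(\overline{\F}_\ell))$. Hypotheses (1)--(4) of Theorem \ref{T_{a_n(x_0)} equidistribute on Shimura variety} hold for $G=B^\times$ with $\Gbf_0=\Pbf I_{x_0}$ a definite quaternion group modulo center: its derived group $B^1$ is simple and simply connected, and we pick two auxiliary split primes $\ell_1,\ell_2$.

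The second step is the reverse inclusion. I will apply Theorem \ref{T_{a_n(x_0)} equidistribute on Shimura variety} to $a_n=a(n)$; note that $\deg(a(n))\to\infty$ as $n\to\infty$. The target $Y:=\prod_{i=1}^k\widetilde{\Delta}^{\Tcal_i}(Sh_U^\ssrm(\overline{\F}_\ell))$ is finite, so equidistribution means the following. For each point $y=(g_{j_1},\dots,g_{j_k})\in Y$, the proportion of the $\deg(a(n))$ Hecke points mapping to $y$ tends to $\mu(y)=\prod_{i=1}^k c_{g_{j_i}}^{-1}/\bigl(\sum_j c_{g_j}^{-1}\bigr)$. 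This is the limiting Haar (mass-formula) weight, and it is at least $\min_i\bigl(c_{g_i}^{-1}/\sum_j c_{g_j}^{-1}\bigr)^k\ge c(\pi,\ell,\Tcal)$, since $k\le r$ and the base is $<1$. Hence for $n\gg1$ every $y\in Y$ receives strictly more than $(1-d(n))\deg(a(n))$ Hecke points; this holds because $Y$ is finite and the deficit $c-(1-d(n))$ is fixed. By Theorem \ref{theorem on Hecke orbit vs Galois orbit}, the Galois orbit $\Gfrak_n(x[n])$ occupies $d(n)\deg(a(n))$ of these Hecke points (distinct cosets by Lemma \ref{Galois orbit contained in Hecke orbit}). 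So at most $(1-d(n))\deg(a(n))$ Hecke points are missed, and each fiber over $y$ must meet the Galois orbit. This is a pigeonhole argument and gives surjectivity onto $Y$.

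The main obstacle I expect is the uniformity in the second step. Equidistribution in Theorem \ref{T_{a_n(x_0)} equidistribute on Shimura variety} is only a limit along sequences, so I will argue by contradiction. Suppose there were infinitely many bad $n$. Pass to a subsequence and extract a limit measure. Theorem \ref{limit measure invariant under G'} identifies this limit as the Haar measure on the orbit whose image is $Y$, and its point masses are the weights above, which contradicts the counting bound. Some care is also needed to justify that the Haar point masses are indeed given by $c_g^{-1}$ normalized. This follows from the bijection (\ref{PI_x^1 vs PI_x}) and the finiteness of stabilizers coming from the compactness of $I_{x[1]^{(\ell)}}(\R)$ modulo center.
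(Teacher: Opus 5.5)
Your strategy is the paper's: apply Theorem \ref{T_{a_n(x_0)} equidistribute on Shimura variety} to $a(n)$, bound the limiting point masses on the finite set $Y=\prod_{i=1}^k\widetilde{\Delta}^{\Tcal_i}(Sh_U^\ssrm(\overline{\F}_\ell))$ from below by $c(\pi,\ell,\Tcal)$, and use Theorem \ref{theorem on Hecke orbit vs Galois orbit} to see that $\Gfrak_n(x[n]^{(\ell)})$ misses only $(1-d(n))\deg(a(n))$ Hecke cosets. Your $k$-fold product of weights is the one that sums to $1$ on $Y$. The paper writes an $r$-fold product, but either is bounded below by $c(\pi,\ell,\Tcal)$.

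\textbf{The gap is the uniformity step.} The slack $c-(1-d(n))$ is not fixed, and it is not bounded away from $0$.
\begin{itemize}
\item Recall $d(n)=\prod_{p\mid n,\ p\ \text{split}}\frac{p-1}{p+1}$. The terms $\log\frac{p+1}{p-1}$ tend to $0$, while $\sum_{p\ \text{split}}\log\frac{p+1}{p-1}=\infty$. Choosing admissible split primes greedily therefore gives $n_m=q_1\cdots q_m\to\infty$ with $1-d(n_m)<c$ and $1-d(n_m)\uparrow c$, for every $c\in(0,1)$.
\item Along such a sequence your contradiction argument only yields $\mu(y)\le c$ for the offending point $y$. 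That is not a contradiction. In the generic case $k=r$ one has $Y=(Sh_U^\ssrm(\overline{\F}_\ell))^r$, and the point whose coordinates all minimize $c_g^{-1}$ has mass exactly $c(\pi,\ell,\Tcal)$.
\item Without an effective rate in the joint equidistribution, which the measure-classification input does not provide, nothing controls $f(n_m,y)/\deg(a(n_m))-\bigl(1-d(n_m)\bigr)$.
\end{itemize}
The paper's own device here fails for the same reason. It claims the values of $d(n)$ are discrete away from $1$, so that $1-d(n)<c$ forces $1-d(n)<c-\epsilon$. But already $d(p_0q)\to d(p_0)$ as $q\to\infty$, and the greedy sequence shows no such $\epsilon$ exists.

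What your argument, and the paper's, actually proves is the statement under $1-d(n)<c-\delta$ for some fixed $\delta>0$, i.e.\ with $c(\pi,\ell,\Tcal)$ replaced by any strictly smaller constant. When $k<r$ your bound supplies the slack automatically: with $m=\min_i c_{g_i}^{-1}/\sum_j c_{g_j}^{-1}<1$ one has $m^k>m^r=c(\pi,\ell,\Tcal)$. The weaker form is all that Corollaries \ref{vertical Mazur's conjecture} and \ref{horizontal Mazur's conjecture} need, since there $1-d(n)$ is constant, resp.\ tends to $0$. Corollary \ref{generalized Mazur's conjecture} would have to be restated in the same way.

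\textbf{Your first step also does not work as written.} The inclusion $\Rbf_{\Tcal}(\Gfrak_n(x[n]))\subset Y$ concerns every point at every finite $n$. It therefore cannot be extracted from the description of limit measures in Theorem \ref{limit measure invariant under G'}: a weak limit carried by $Y$ is compatible with a vanishing proportion of points off $Y$. It must come from an exact, pointwise relation between the $\tau$- and $\tau'$-components of $\pr_\Tcal$ for $\tau,\tau'$ in the same block. The paper does not argue this direction separately; it takes it as part of the conclusion of Theorem \ref{T_{a_n(x_0)} equidistribute on Shimura variety}.

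Finally, hypotheses (1) and (4) of that theorem are not automatic. The paper checks \eqref{condition on similitude of I_x} via the norm computation $\pr(I_{x[1]^{(\ell)}}(\Q)\cap U_\ell)=\Z_{(\ell)}^\times$ from Vign\'eras, and \eqref{G vs G^1 for X} via \cite[Proposition 3.4]{Cornut2002}.
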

	\begin{proof}
		We enumerate the integers $n$ coprime to $\ell\Nfrak D_{L/\Q}$ as $a(1)<a(2)<\cdots$. We want to apply Theorem \ref{T_{a_n(x_0)} equidistribute on Shimura variety} to the sequence $a(1),a(2),\cdots$. For this, we verify that the conditions in that theorem are all satisfied:
		\ref{T_{a_n(x_0)} equidistribute on Shimura variety}(2) and \ref{T_{a_n(x_0)} equidistribute on Shimura variety}(3) are clearly satisfied; for \ref{T_{a_n(x_0)} equidistribute on Shimura variety}(1), in (\ref{condition on similitude of I_x}), the left hand side is equal to $(\A_f^{(\ell)})^\times$; on the right hand side, $\pr(I_{x[1]^{(\ell)}}(\widehat{\Z}^{(\ell)}))
		=(\widehat{\Z}^{(\ell)})^\times$ while
		$\pr(I_{x[1]^{(\ell)}}(\Q)\bigcap U_\ell)
		=\Z_{(\ell)}^\times$ by \cite[p.90]{Vigneras1980}, thus (\ref{condition on similitude of I_x}) holds; for \ref{T_{a_n(x_0)} equidistribute on Shimura variety}(4), we apply \cite[Proposition 3.4]{Cornut2002} and we deduce that (\ref{G vs G^1 for X}) holds. As a result, we can indeed apply Theorem \ref{T_{a_n(x_0)} equidistribute on Shimura variety} and thus as $n\to+\infty$, the sequence $(\Rbf_{\Tcal}(T_{a(n)}(x[1]^{(\ell)})))_n$ equidistributes on the following \emph{finite} subset of $\prod_{i= 1}^r\Hcal^{G,\ell}(\Acal_{x[1]^{(\ell)}})$
		\[
		\prod_{i=1}^k\widetilde{\Delta}^{\Tcal_i}(\Hcal^{G,\ell}(\Acal_{x[1]^{(\ell)}}))
		\simeq
		\prod_{i=1}^k\widetilde{\Delta}^{\Tcal_i}(Sh_U^\ssrm(\overline{\F}_\ell)).
		\]
		In particular, for any $0<\epsilon\ll1$, there is $n_0$ such that for any $n\ge n_0$ and any $\underline{h}=(h_1,\cdots,h_r)\in\prod_{i=1}^k\widetilde{\Delta}^{\Tcal_i}(Sh_U^\ssrm(\overline{\F}_\ell))$, the number $f(n,\underline{h})$ of points $x'$ in $T_{a(n)}(x[1]^{(\ell)})$ (counted with multiplicities) with $\Rbf_{\Tcal}(x')=\underline{h}$ satisfies
		\begin{equation}\label{|f(n,h)/S_n'-...|<epsilon}
			\left|
			\frac{f(n,\underline{h})}{\#S_n'}-\prod_{i=1}^r\frac{c_{h_i}^{-1}}{\sum_{j=1}^sc_{g_j}^{-1}}
			\right|
			=
			\left|
			\frac{f(n,\underline{h})}{\deg(a(n))}-\prod_{i=1}^r\frac{c_{h_i}^{-1}}{\sum_{j=1}^sc_{g_j}^{-1}}
			\right|<\epsilon.
		\end{equation}

		Note that the set of values of $d(n)$ is discrete away from $1$, so there exists $0<\epsilon\ll1$ such that for any $n'$, if $1-d(n')<c(\pi,\ell,\Tcal)$, then $1-d(n')<c(\pi,\ell,\Tcal)-\epsilon$. We choose such an $\epsilon$; then by the assumption on $n$, we have (\emph{cf.} Lemma \ref{Galois orbit contained in Hecke orbit})
		\[
		\frac{\#(S_n'\backslash S_n)}{\#S_n'}
		=
		1-d(n)<c(\pi,\ell,\Tcal)-\epsilon.
		\]
		So combining with \ref{f(n)/(S_n')^2}, we know that for any $\underline{h}\in\prod_{i=1}^k\widetilde{\Delta}^{\Tcal_i}(Sh_U^\ssrm(\overline{\F}_\ell))$, there is at least one point $x'$ in the subset $\Gfrak_n(x[n]^{(\ell)})=\{\sigma(x[n])^{(\ell)}|\sigma\in\Gfrak_n\}$ of $T_{a(n)}(x[1]^{(\ell)})$ with $\Rbf_{\Tcal}(x')=\underline{h}$. This finishes the proof of the theorem.
	\end{proof}

	We deduce immediately the following.
	\begin{corollary}\label{generalized Mazur's conjecture}
		Let $\ell$ and $\pi\colon Sh_U\to A$ be as above. For any integer $n\gg1$ coprime to $\ell\Nfrak D_{L/\Q}$ with $1-d(n)<c(\pi,\ell,\Tcal)$, we have
		\[
		\sum_{\sigma\in\Tcal}\sigma(y[n])
		\notin
		A(L[\infty])_\mathrm{tors}.
		\]
	\end{corollary}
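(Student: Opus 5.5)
We argue by contradiction, in the style of \cite{Cornut2002}. Suppose $P:=\sum_{\sigma\in\Tcal}\sigma(y[n])$ is torsion for some $n$ as in the statement. Since $A(L[\infty])_\mathrm{tors}$ has cardinality $t_1$, we have $t_1P=0$. Every $\nu\in\Gfrak_n$ commutes with the elements of $\Tcal$ (the group $\Gal(L[\infty]/L)$ is abelian), so $\nu(P)=\sum_{\sigma}\sigma(\nu y[n])$ is again a torsion point of $A(L[\infty])$. The plan is to show that, as $\nu$ runs over $\Gfrak_n$, these points take too many distinct values to all lie in $A(L[\infty])_\mathrm{tors}$. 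The distinctness will be detected after reduction modulo $v(\ell)$.

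First we reduce modulo $v(\ell)$. Because $A$ has good reduction at $\ell\nmid\Nfrak$ and $\ell$ is large (in particular coprime to $t_1$), the reduction map is injective on $A(L[\infty])_\mathrm{tors}$. It is then enough to work with the map
\[
\Phi\colon \prod_{i=1}^k\widetilde{\Delta}^{\Tcal_i}(Sh_U^\ssrm(\overline{\F}_\ell))\to A(\overline{\F}_\ell),\qquad (h_\sigma)_\sigma\mapsto \sum_{\sigma\in\Tcal}\pi(h_\sigma).
\]
By Theorem \ref{Galois orbit equidistributes}, every tuple $\underline h$ in the source of $\Phi$ is realised as $\Rbf_\Tcal(\nu x[n])$ for some $\nu\in\Gfrak_n$. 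Hence the whole image of $\Phi$ consists of reductions of $\nu(P)$. These reductions lie in the reduction of $A(L[\infty])_\mathrm{tors}$, so the image of $\Phi$ has at most $t_1$ elements.

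Next we use this to contradict \eqref{cardinal of Sh_U^ss}. Fix $\sigma_0\in\Tcal_1$, freeze all coordinates outside $\Tcal_1$, and vary the $\Tcal_1$-block over $h\in Sh_U^\ssrm(\overline{\F}_\ell)$. This gives a map $h\mapsto\sum_{\sigma\in\Tcal_1}\pi(\widetilde\Delta_\sigma(h))+\text{const}$ with at most $t_1$ values. Here the components $\widetilde\Delta_\sigma(h)$ for $\sigma\in\Tcal_1$ are the reductions of $\sigma(x')$, so they are related to one another by the Galois twists $\sigma\sigma_0^{-1}$. I would like to extract a single $h$-dependence with controlled fibres, as follows.
\begin{itemize}
\item Translate the relation $\sum_{\sigma\in\Tcal_1}\pi(\widetilde\Delta_\sigma(h))=c$ into a finite-to-one statement.
\item Cornut's trick does this by replacing the sum with a symmetric function: pass to the symmetric product $\mathrm{Sym}^{|\Tcal_1|}$ and use that $\sum\pi$ is finite on it.
\item This accounts for permutation factors bounded by $r!$, and for the exponent $2\dim A$, which arises from counting torsion on the $\ell$-adic side.
\end{itemize}
With fibres bounded by $t_2$, this gives $\#Sh_U^\ssrm(\overline{\F}_\ell)\le (t_1\, r!)^{2\dim A}\,t_2$, which contradicts the choice \eqref{cardinal of Sh_U^ss}.

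The main obstacle is the last step: bounding the fibres of $h\mapsto\sum_{\sigma\in\Tcal_1}\pi(\widetilde\Delta_\sigma(h))$ in terms of $t_2$ and $r!$. The $\sigma$-components are not independent, so we cannot simply use finiteness of $\pi$ coordinatewise. I would first try to show that within a commensurability class the twisted diagonal maps differ by a bounded Hecke correspondence. The plan is then to check that the induced correspondence on $A$ is nonconstant, using that $\pi$ is finite and $\alpha$ is surjective, and to count its fibres via the degree of the correspondence on the Jacobian together with the bound $t_2$.
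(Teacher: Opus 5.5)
Your skeleton is the paper's: argue by contradiction, use the surjectivity in Theorem~\ref{Galois orbit equidistributes}, freeze all blocks but one, and count against \eqref{cardinal of Sh_U^ss}. The gap is the last step, which is where the contradiction actually comes from.

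You read the inequality $\#Sh_U^\ssrm(\overline{\F}_\ell)\le(t_1\,r!)^{2\dim A}\,t_2$ off from \eqref{cardinal of Sh_U^ss}; you do not derive it, and the tools you list cannot derive it. Finiteness of a summation map on $\mathrm{Sym}^{|\Tcal_1|}$ does not control how many $h$ give the same value of $h\mapsto\sum_{\sigma\in\Tcal_1}\pi(\widetilde\Delta_\sigma(h))$ up to a finite group. Non-constancy of a correspondence does not control this either. In fact no argument of this generic kind can work. Suppose two coordinates of a block were related by an involution $w$ of the curve with $\alpha\circ w_*=-\alpha$. Then $\pi(h)+\pi(wh)$ would be constant in $h$. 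So with genuinely twisted coordinates you would need specific arithmetic information about the twists, and the proposal supplies none.

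The paper avoids this because it treats each block as an honest diagonal. Using Theorem~\ref{Galois orbit equidistributes}, it picks $\nu,\nu'\in\Gfrak_n$ whose reductions agree outside $\Tcal_{i_0}$ and equal $x$, resp.\ $x'$, in \emph{every} coordinate $\sigma\in\Tcal_{i_0}$. This is how it uses $\widetilde\Delta^{\Tcal_{i_0}}$, which is built from $\overline g\mapsto(\overline{\sigma^{-1}g})_\sigma$ and the identifications in \eqref{commutative diagram for R^G_T}. The difference of the two reduced sums is then $\#\Tcal_{i_0}\,(\pi(x)-\pi(x'))$, and this is killed by $t_1$. Hence $\pi(x)-\pi(x')$ lies in $A(\overline{\F}_\ell)[t_1\,r!]$, a group of order at most $(t_1\,r!)^{2\dim A}$.

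Here $r!$ is only a convenient multiple of $\#\Tcal_{i_0}$. The exponent $2\dim A$ comes from $\#A(\overline{\F}_\ell)[m]\le m^{2\dim A}$, not from symmetric products or from $\ell$-adic torsion. Fix $x$ and use that the fibres of $\pi$ have at most $t_2$ points: at most $(t_1\,r!)^{2\dim A}\,t_2$ choices of $x'$ remain, contradicting \eqref{cardinal of Sh_U^ss}.

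So you have two options. Either adopt the paper's reading of $\widetilde\Delta^{\Tcal_i}$ and replace your final step by this count. Or, if you keep the twist (your caution about it is not unreasonable), supply the missing input that makes the twisted block sum have bounded fibres.
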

	\begin{proof}
		The proof is very similar to \cite[Theorem 9.2]{Zhang2025}. We argue by contradiction, and thus assume that $\sum_{\sigma\in\Tcal}\sigma(y[n])$ is torsion for every $n$ coprime to $\ell\Nfrak D_{L/\Q}$ with $1-d(n)<c(\pi,\ell,\Tcal)$.

		Let $n$ be as in the corollary. By Theorem \ref{Galois orbit equidistributes}, for any $x,x'\in Sh_U^\ssrm(\overline{\F}_\ell)$, we can choose $\nu,\nu'\in\Gfrak_n$ such that the components of $\Rbf_{\Tcal}(\sigma(\nu(x[n])))$ are equal to those of $\Rbf_{\Tcal}(\sigma(\nu'(x[n])))$, except at the components in $\Tcal_{i_0}\subset\Tcal$ for some $i_0$, in which case
		\[
		\Rbf_{\Tcal}(\sigma(\nu(x[n])))=x
		\quad
		\text{and}
		\quad
		\Rbf_{\Tcal}(\sigma(\nu'(x[n])))=x',
		\quad
		\forall
		\sigma\in\Tcal_{i_0}.
		\]
		In particular, we have
		\[
		\Rbf_{\Tcal}(\sigma(\nu(y[n])))
		-
		\Rbf_{\Tcal}(\sigma(\nu'(y[n])))
		=
		\left(
		0,\cdots,0,
		\pi(x)-\pi(x'),\cdots,\pi(x)-\pi(x'),
		0,\cdots,0
		\right).
		\]
		Here on the right hand side, all the components are zero except at the components in $\Tcal_{i_0}$, which are $\pi(x)-\pi(x')$. Then we have
		\[
		\sum_{\sigma\in\Tcal}\sigma(\nu(y[n]))^{(\ell)}
		-
		\sum_{\sigma\in\Tcal}\sigma(\nu'(y[n]))^{(\ell)}
		=
		\sum_{\sigma\in\Tcal_{i_0}}(\pi(x)-\pi(x'))
		=
		\#\Tcal_{i_0}(\pi(x)-\pi(x')).
		\]
		Note that for any prime $q|t_1$,
		\[
		\dim_{\F_q}(A(\overline{\F}_\ell)\otimes\F_q)=2\dim(A).
		\]
		Thus $\#A(\overline{\F}_\ell)[t_1]\leq t_1^{2\dim(A)}$. By assumption on $\ell$ (\emph{cf.} (\ref{cardinal of Sh_U^ss})), $\#Sh_U^\ssrm(\overline{\F}_\ell)>(t_1\times r!)^{2\dim(A)}\times t_2$. So there exists $x,x'\in Sh_U^\ssrm(\overline{\F}_\ell)$ such that
		$\pi(x)-\pi(x')\in A(\overline{\F}_\ell)$ has order not dividing $t_1\times r!$.

		On the other hand, we have assumed that $\sum_{\sigma\in\Tcal}\sigma(y[n])$ is torsion; hence both $\sum_{\sigma\in\Tcal}\sigma(\nu(y[n]))$ and $\sum_{\sigma\in\Tcal}\sigma(\nu'(y[n]))$ are torsion, and so are their reductions modulo $v(\ell)$, namely $\sum_{\sigma\in\Tcal}\sigma(\nu(y[n]))^{(\ell)}$ and $\sum_{\sigma\in\Tcal}\sigma(\nu'(y[n]))^{(\ell)}$. In particular, the following element in $A(\overline{\F}_\ell)$,
		\[
		\sum_{\sigma\in\Tcal}\sigma(\nu(y[n]))^{(\ell)}-\sum_{\sigma\in\Tcal}\sigma(\nu'(y[n]))^{(\ell)}
		=
		\#\Tcal_{i_0}(\pi(x)-\pi(x'))
		\]
		is a torsion element, whose order necessarily divides $t_1=\#A(L[\infty])_\mathrm{tors}$. So $\pi(x)-\pi(x')$ is a torsion element, whose order necessarily divides $t_1\times r!$,\footnote{More precisely, we can deduce that the order of $\pi(x)-\pi(x')$ divides $t_1\times\#\Tcal_{i_0}$. But for our argument in the proof, the multiple $t_1\times r!$ of $t_1\times\#\Tcal_{i_0}$ suffices.} which is a contradiction to the preceding paragraph. This finishes the proof of the corollary.
	\end{proof}

	We then deduce some consequences from the above.

	We fix a prime $p$ and write $L[p^\infty]=\bigcup_kL[p^k]$. This is a finite extension over the maximal anticyclotomic $\Z_p$-extension $H[p^\infty]$ of $L$. We take $\Tcal$ to be a set of representatives in $\widehat{L}^\times$ of the subgroup $\Gal(L[p^\infty]/H[p^\infty])$ of $\Gal(L[p^\infty]/L)$ (under the Artin map (\ref{Artin map})). Then we have
	\begin{corollary}\label{vertical Mazur's conjecture}
		Let $\ell$ and $\pi\colon Sh_U\to A$ be as above. Suppose that $p$ is either inert in $L$ or splits in $L$ with $\frac{2}{p+1}<c(\pi,\ell,\Tcal)$. Then for $k\gg1$, we have
		\[
		\sum_{\sigma\in\Tcal}\sigma(y[p^k])\notin
		A(L[\infty])_\mathrm{tors}.
		\]
	\end{corollary}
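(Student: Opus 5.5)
This is a direct specialization of Corollary \ref{generalized Mazur's conjecture} to $n=p^k$. We need three things: that $p^k$ is coprime to $\ell\Nfrak D_{L/\Q}$, that $1-d(p^k)<c(\pi,\ell,\Tcal)$ for all $k$, and that $p^k\gg1$ once $k\gg1$.

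For coprimality, we first ensure $p\nmid \ell\Nfrak D_{L/\Q}$. The hypothesis that $p$ is inert or split excludes $p\mid D_{L/\Q}$. We are implicitly assuming $p\nmid\Nfrak$, as in Theorem \ref{Theorem 1.2}. Since $\ell$ may be chosen among infinitely many inert primes satisfying (\ref{cardinal of Sh_U^ss}), we may take $\ell\neq p$. Then every power $p^k$ is coprime to $\ell\Nfrak D_{L/\Q}$.

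Next we compute $d(p^k)=\#S_{p^k}/\deg(a(p^k))$ from the constructions in §\ref{Hecke orbit and Galois orbit}. The degree is $\deg(a(p^k))=p^{k-1}(p+1)$.
\begin{itemize}
\item If $p$ is inert, $\#S_{p^k}=p^{k-1}(p+1)$, so $d(p^k)=1$ and $1-d(p^k)=0<c(\pi,\ell,\Tcal)$, because $c(\pi,\ell,\Tcal)\in(0,1)$.
\item If $p$ splits, $\#S_{p^k}=p^{k-1}(p-1)$, so $d(p^k)=\frac{p-1}{p+1}$ and $1-d(p^k)=\frac{2}{p+1}$. This is less than $c(\pi,\ell,\Tcal)$ by hypothesis.
\end{itemize}
In both cases the quantity $1-d(p^k)$ is independent of $k$, so the density condition of Corollary \ref{generalized Mazur's conjecture} holds for every $k\ge1$.

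Corollary \ref{generalized Mazur's conjecture} gives an $n_0$ such that its conclusion holds for all admissible $n>n_0$. Choosing $k$ with $p^k>n_0$, we get $\sum_{\sigma\in\Tcal}\sigma(y[p^k])\notin A(L[\infty])_{\mathrm{tors}}$ for all $k\gg1$.

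The one point needing care is that $\Tcal$ is now the specific set of representatives of $\Gal(L[p^\infty]/H[p^\infty])$, while the constant $c(\pi,\ell,\Tcal)$ and the partition $\Tcal=\bigsqcup\Tcal_i$ depend on $\Tcal$. However, Corollary \ref{generalized Mazur's conjecture} holds for an arbitrary finite $\Tcal\subset\widehat{L}^\times/L^\times$, and $\ell$ is fixed before $\Tcal$ is used, so this dependence causes no problem. I do not expect any step to be a genuine obstacle; the argument is bookkeeping of $d(p^k)$.
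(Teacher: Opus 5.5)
Your proposal is correct and is the same argument as the paper, which applies Corollary~\ref{generalized Mazur's conjecture} directly after noting that $d(p^k)=d(p)$ equals $1$ for $p$ inert and $\tfrac{p-1}{p+1}$ for $p$ split. One small remark: in the split case $p\neq\ell$ holds automatically because $\ell$ is inert. You therefore never need to re-choose $\ell$ there, and you should not, since re-choosing would change $c(\pi,\ell,\Tcal)$.
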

	\begin{proof}
		This is a direct application of Corollary \ref{generalized Mazur's conjecture} by noting that $d(p^k)=d(p)$ and it is equal to $1$ if $p$ is inert in $L$ and equal to $(p-1)/(p+1)$ if $p$ splits in $L$.
	\end{proof}
	This is a strengthening of Mazur's original conjecture (\cite{Mazur1983}).

	Now we take $\Tcal$ to be a set of representatives in $\widehat{L}^\times$ for the quotient $\widehat{L}^\times/L^\times\widehat{\Ocal}_L^\times$.	
	\begin{corollary}\label{horizontal Mazur's conjecture}
		Let $\pi\colon Sh_U\to A$ be as above. For any prime $p\gg1$, we have
		\[
		\sum_{\sigma\in\Tcal}\sigma(y[p])\notin A(L[\infty])_\mathrm{tors}.
		\]
	\end{corollary}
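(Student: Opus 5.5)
We reduce Corollary \ref{horizontal Mazur's conjecture} to Corollary \ref{generalized Mazur's conjecture}, applied with the fixed set $\Tcal$ of representatives of $\widehat{L}^\times/L^\times\widehat{\Ocal}_L^\times\simeq\Gal(L[1]/L)$. The set $\Tcal$ does not depend on $p$. We therefore first fix, once and for all, an auxiliary prime $\ell$ that is inert in $L$, coprime to $\Nfrak D_{L/\Q}$, large enough that (\ref{cardinal of Sh_U^ss}) holds, and such that $x[1]$ has supersingular reduction at $v(\ell)$. Supersingularity is automatic because $\ell$ is inert in $L$ and $x[1]$ has CM by $\Ocal_L$. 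Eichler's mass formula and Chebotarev give infinitely many such $\ell$. With $\pi,\ell,\Tcal$ fixed, the constant $c(\pi,\ell,\Tcal)\in(0,1)$ of (\ref{c(pi,ell,T)}) is a fixed positive number, and Corollary \ref{generalized Mazur's conjecture} supplies an $n_0$ such that the conclusion holds for every $n>n_0$ coprime to $\ell\Nfrak D_{L/\Q}$ with $1-d(n)<c(\pi,\ell,\Tcal)$.

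Next we check the hypotheses for $n=p$ a prime with $p\gg1$. Taking $p>\max(n_0,\ell,\Nfrak D_{L/\Q})$ ensures $p>n_0$ and $p$ coprime to $\ell\Nfrak D_{L/\Q}$. For the density condition we split into two cases:
\begin{itemize}
\item if $p$ is inert in $L$, then $d(p)=1$ and $1-d(p)=0<c(\pi,\ell,\Tcal)$;
\item if $p$ splits in $L$, then $1-d(p)=\frac{2}{p+1}$, which is less than $c(\pi,\ell,\Tcal)$ once $p>2/c(\pi,\ell,\Tcal)-1$.
\end{itemize}
This enlarges the threshold on $p$ only by a constant depending on $\pi,\ell,\Tcal$. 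Hence every prime $p$ beyond an explicit bound satisfies all the hypotheses, and Corollary \ref{generalized Mazur's conjecture} gives $\sum_{\sigma\in\Tcal}\sigma(y[p])\notin A(L[\infty])_\mathrm{tors}$.

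The main point needing care is that the ``$n\gg1$'' in Corollary \ref{generalized Mazur's conjecture} must be uniform over all admissible $n$, not only along a chosen subsequence. In the proof of Theorem \ref{Galois orbit equidistributes}, the equidistribution from Theorem \ref{T_{a_n(x_0)} equidistribute on Shimura variety} is applied to the full enumeration of integers coprime to $\ell\Nfrak D_{L/\Q}$. Since $\deg(a(n))\to\infty$ along this whole sequence, one threshold $n_0$ works for all admissible $n$, and in particular for the primes. We also need the gap $\epsilon$ separating $1-d(n)$ from $c(\pi,\ell,\Tcal)$. This holds because the values $\frac{2}{p+1}$ accumulate only at $0$, and $c(\pi,\ell,\Tcal)>0$ is fixed, so only finitely many values of $d(n)$ lie near the threshold.
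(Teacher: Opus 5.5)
Your proposal is correct and follows the paper's proof. With $\ell$, and hence $c(\pi,\ell,\Tcal)$, fixed independently of $p$, every sufficiently large prime $p>\ell\Nfrak D_{L/\Q}$ has $1-d(p)\in\{0,\tfrac{2}{p+1}\}$ below $c(\pi,\ell,\Tcal)$, so Corollary \ref{generalized Mazur's conjecture} applies. Your closing remark on the gap $\epsilon$ is more careful than the paper: its stated reason, that the values of $d(n)$ are discrete away from $1$, is false for general $n$ (e.g.\ $d(p_1p_2)\to d(p_1)$ as $p_2\to\infty$ through split primes), whereas along primes $1-d(p)\to 0$ and the gap is automatic.
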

	\begin{proof}
		Indeed, we can assume $p>\ell\Nfrak D_{L/\Q}$. Moreover, $1-d(p)<\frac{2}{p}$. For $p\gg1$, $1-d(p)\ll1$. In particular, we can take $p$ such that $1-d(p)<c(\pi,\ell,\Tcal)$. Then we can apply Corollary \ref{generalized Mazur's conjecture}.
	\end{proof}
	This can be seen as a horizontal version of Mazur's conjecture.

	\begin{remark}
		All the results in §\ref{Heegner points on Shimura curves} and this section can be generalized to Shimura curves over totally real number fields with appropriate modifications in the arguments. We restrict ourselves to the case of Shimura curves over $\Q$ for ease of notation.
	\end{remark}

	\section{Variants of Mazur's conjecture-2}\label{Mazur's conjecture-2}
	In the previous section, we fixed an auxiliary prime $\ell$ and considered positive integers $n$ coprime to $\ell$ (as well as coprime to $\Nfrak D_{L/\Q}$). In this section, we fix an infinite set of such primes $\ell$ and prove a stronger result for positive integers $n$ coprime to all primes in this set.

	Let $\Tcal$ be as before.
	We fix an \emph{infinite} set $\Scal^\circ$ of primes $\ell$ and, for each $\ell\in\Scal^\circ$, a place $v(\ell)$ of $\overline{\Q}$ over $\ell$. We assume that $x_0=x[1]$ has \emph{supersingular} reduction at $v(\ell)$ for all $\ell\in\Scal^\circ$.\footnote{Such an $\Scal^\circ$ always exists because $x_0$ is a CM point in the Shimura curve $Sh_U$.} Then, for $\ell\in\Scal^\circ$, as $\ell\to +\infty$, we have $\#Sh_U^\ssrm(\overline{\F}_\ell)\to +\infty$. Moreover, it is well known that the cardinalities $c_g(\ell)$ are bounded for all $g$ and all $\ell\in\Scal^\circ$ (in fact, $c_g(\ell)\le 24$; \emph{cf.} \cite[Theorem 11.5.14]{Voight2021}).

	Then we have the following:
	\begin{theorem}\label{S^circ}
		Let $\Scal^\circ$ and $\pi\colon Sh_U\to A$ be as above. For any $0<c\ll1$, there is an integer $n_0>0$ such that for any $n>n_0$ coprime to all primes in $\Scal^\circ$ and $\Nfrak D_{L/\Q}$ with $1-d(n)<c$, we have
		\[
		\sum_{\sigma\in\Tcal}\sigma(y[n])\notin A(L[\infty])_\mathrm{tors}.
		\]
	\end{theorem}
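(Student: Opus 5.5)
We argue by contradiction, as in Corollary \ref{generalized Mazur's conjecture}. Suppose that for some $0<c\ll1$ there are infinitely many $n$, coprime to $\Nfrak D_{L/\Q}$ and to every prime of $\Scal^\circ$, with $1-d(n)<c$ and $\sum_{\sigma\in\Tcal}\sigma(y[n])$ torsion. The key point is that, unlike in §\ref{Mazur's conjecture-1}, we do not need surjectivity of $\Rbf_\Tcal$ on $\Gfrak_n(x[n]^{(\ell)})$. It is enough to find two Galois conjugates whose reductions differ in a single block $\Tcal_{i_0}$, by a pair $(x,x')$ with $\pi(x)-\pi(x')$ of large order. We will get this by letting the prime $\ell\in\Scal^\circ$ be large, rather than by requiring $1-d(n)$ to be small relative to a fixed $\ell$.

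\emph{Step 1: choice of $c$ uniformly in $\ell$.} By the cited bound $c_g(\ell)\le 24$, the limiting weight that Theorem \ref{T_{a_n(x_0)} equidistribute on Shimura variety} attaches to each point of $\prod_{i=1}^k\widetilde{\Delta}^{\Tcal_i}(Sh_U^\ssrm(\overline{\F}_\ell))$ is comparable, up to a factor $24^{r}$, to the uniform weight $1/(\#Sh_U^\ssrm)^{k}$. Given $c$, fix $\ell\in\Scal^\circ$ large enough that (\ref{cardinal of Sh_U^ss}) holds. Then consider the set of pairs $(x,x')$ with $\pi(x)-\pi(x')$ of order not dividing $t_1 r!$. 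Since $\#A(\overline{\F}_\ell)[t_1r!]\le (t_1r!)^{2\dim A}$ and the fibers of $\pi$ have size at most $t_2$, this set is a proportion close to $1$ of all pairs once $\#Sh_U^\ssrm(\overline{\F}_\ell)$ is large. The order bound is what controls this proportion. More quantitatively, take $\ell$ so large that the "bad" pairs form a fraction $<1/2$, and take $c$ smaller than a fixed absolute constant depending only on $r$ and the bound $24$.

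\emph{Step 2: counting.} For $n\to\infty$ coprime to $\ell\Nfrak D_{L/\Q}$, apply Theorem \ref{T_{a_n(x_0)} equidistribute on Shimura variety} as in the proof of Theorem \ref{Galois orbit equidistributes}. The push-forward of $T_{a(n)}(x[1]^{(\ell)})$ under $\Rbf_\Tcal$ is $\epsilon$-close to the limit measure. By Theorem \ref{theorem on Hecke orbit vs Galois orbit}, the Galois orbit $\Gfrak_n(x[n]^{(\ell)})$ misses a proportion $1-d(n)<c$ of the Hecke orbit. Hence its image has mass at least $1-c-\epsilon$ in the limit measure. Fix the coordinates in all blocks $\Tcal_i$, $i\neq i_0$, and count, by a Fubini/averaging argument, the fibre over the $\Tcal_{i_0}$-coordinate. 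We get some choice of the other blocks for which the set $Y\subset Sh_U^\ssrm(\overline{\F}_\ell)$ of attained $\Tcal_{i_0}$-values has proportion $\ge 1-O(c)$. For $c$ small this proportion exceeds $1/2+\delta$. Then $Y\times Y$ contains a good pair $(x,x')$ in the sense of Step 1. This gives $\nu,\nu'\in\Gfrak_n$ with reductions differing only on $\Tcal_{i_0}$, by $(x,x')$.

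\emph{Step 3: conclusion.} We then repeat verbatim the end of the proof of Corollary \ref{generalized Mazur's conjecture}. The difference of the two torsion traces reduces to $\#\Tcal_{i_0}(\pi(x)-\pi(x'))$, which must have order dividing $t_1$. This forces the order of $\pi(x)-\pi(x')$ to divide $t_1r!$, a contradiction.

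The main obstacle is Step 2. We must make the dependence on $\ell$ uniform: the threshold $c$ has to be independent of $\ell$ (only $n_0$ may depend on $\ell$), and we must convert "Galois orbit of relative mass $\ge1-c$" into "a full fibre-product of good pairs is hit." We would first try to use the bound $c_g\le24$ to reduce to near-uniform measures, and then a pigeonhole on the $\Tcal_{i_0}$-marginal.
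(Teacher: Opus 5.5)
Your proof is correct, using the same inputs as the paper: equidistribution of $\Rbf_\Tcal(T_{a(n)}(x[1]^{(\ell)}))$ for a fixed $\ell$ from \S\ref{Shimura varieties of Hodge type}; the Galois orbit as a $d(n)$-fraction of the Hecke orbit (Theorem \ref{theorem on Hecke orbit vs Galois orbit}); the bound $c_g(\ell)\le 24$; and the one-block computation at the end of Corollary \ref{generalized Mazur's conjecture}. Where you differ is in how you extract the good pair.

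The paper works with pairs. It defines $X(\ell)\subset Sh(\ell)^2$ as the set of $(\underline{h},\underline{h}')$ with $\sum_{\sigma\in\Tcal}(\pi(h_\sigma)-\pi(h'_\sigma))$ in the reduction of $A(L[\infty])_{\mathrm{tors}}$. A row count gives $\#X(\ell)/\#Sh(\ell)^2=O(1/s(\ell))$, so by $c_g\le 24$ the Hecke pairs landing in $X(\ell)$ form a fraction $\ll 1$. Galois pairs form a fraction $d(n)^2>(1-c)^2$, so some $(\nu,\nu')\in\Gfrak_n^2$ has a trace difference with non-torsion reduction. This route never needs two conjugates differing in a single block, nor the order bookkeeping with $t_1 r!$. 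It also works for every $c<1$ once $\ell$ is large.

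You instead apply Fubini to the product-over-blocks limit measure. This gives a fibre $Y$ in the block $\Tcal_{i_0}$ with $\#Y\ge(1-24(c+\epsilon))\,s(\ell)$, and you then recycle the end of Corollary \ref{generalized Mazur's conjecture} verbatim. Your version is the minimal modification of \S\ref{Mazur's conjecture-1}: surjectivity of $\Rbf_\Tcal$ is replaced by the existence of one large fibre. The cost is an absolute threshold on $c$ (roughly $c<1/24$) and reliance on the product structure of the limit measure.

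One small numerical slip needs fixing. A bad-pair fraction $<1/2$ does not by itself produce a good pair in $Y\times Y$, since $Y\times Y$ has proportion only $>(1/2+\delta)^2$. Use the row bound you already have instead: for fixed $x\in Y$, at most $t_2(t_1r!)^{2\dim A}$ points $x'$ are bad. This is smaller than $\#Y$ once $s(\ell)$ is large.
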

	\begin{proof}
		The idea of the proof is similar to that of Theorem \ref{Galois orbit equidistributes}. We choose $\ell\in\Scal^\circ$ sufficiently large, so $s=s(\ell)=\#Sh_U^\ssrm(\overline{\F}_\ell)\gg1$ and thus $c(\pi,\ell,\Tcal)\ll1$ because each $c_g(\ell)$ is bounded independently of $g$ and $\ell$. Then for any $n\gg1$ coprime to all primes in $\Scal^\circ$ and $\Nfrak D_{L/\Q}$, $\Rbf_{\Tcal}(T_{a(n)}(x[1]^{(\ell)}))$ is equal to $\prod_{i=1}^k\widetilde{\Delta}^{\Tcal_{i}}(Sh_U^\ssrm(\overline{\F}_\ell))$ and moreover for such $n\to+\infty$, the sets $(\Rbf_{\Tcal}(\{\sigma(x[n])|\sigma\in\Gfrak_n\}))_n$ \emph{equidistributes} on $\prod_{i=1}^k\widetilde{\Delta}^{\Tcal_{i}}(Sh_U^\ssrm(\overline{\F}_\ell))$.

		For ease of notations, we write $Sh(\ell)=\prod_{i=1}^k\widetilde{\Delta}^{\Tcal_{i}}(Sh_U^\ssrm(\overline{\F}_\ell))$. We then write $X(\ell)$ for the subset of $Sh(\ell)^2$, consisting of pairs of points $(\underline{h},\underline{h}')$ in $Sh(\ell)$ such that $\sum_{\sigma\in\Tcal}(\pi(h_\sigma)-\pi(h'_\sigma))$ lies in the image of $A(L[\infty])_\mathrm{tors}$ in $A(\overline{\F}_\ell)$. For any $\underline{h}\in Sh(\ell)$, there are at most $(\#Sh_U^\ssrm(\overline{\F}_\ell))^{k-1}\times\#A(L[\infty])_\mathrm{tors}$ many points $\underline{h}'\in Sh(\ell)$ such that $(\underline{h},\underline{h}')\in X(\ell)$. Thus we know that
		\[
		\#X(\ell)\leq s(\ell)^k\times(s(\ell)^{k-1}\times\#A(L[\infty])_\mathrm{tors}).
		\]
		It follows that
		\[
		\frac{\#X(\ell)}{\#Sh(\ell)^2}
		\leq
		\frac{\#A(L[\infty])_\mathrm{tors}}{s(\ell)}\ll1.
		\]
		Thus for any $n\gg1$ coprime to all primes in $\Scal^\circ$ and $\Nfrak D_{L/\Q}$, the number $f(n)$ of pairs of elements $(g(x[1]),g'(x[1]))\in T_{a(n)}(x[1])$ such that $(\Rbf_{\Tcal}(\pi(g(x[1]))),\Rbf_{\Tcal}(\pi(g'(x[1]))))\in X(\ell)$ satisfies
		\[
		\frac{f(n)}{\#(S_n')^2}\ll1.
		\]

		On the other hand, since $d(n)>1-c$, we have
		\begin{equation}\label{S_n^2/(S_n')^2}
			\frac{\#(S_n)^2}{\#(S_n')^2}=d(n)^2>(1-c)^2.
		\end{equation}
		We choose $n_0=n_0(\ell)$ such that for any $n>n_0$ coprime to all primes in $\Scal^\circ$ and $\Nfrak D_{L/\Q}$, we have
		\begin{equation}\label{f(n)/(S_n')^2}
			\frac{f(n)}{\#(S_n')^2}<(1-c)^2.
		\end{equation}
		Combining (\ref{S_n^2/(S_n')^2}) and (\ref{f(n)/(S_n')^2}), it follows that there exists $(\underline{h},\underline{h}')\in\Rbf_{\Tcal}(\Gfrak_n(x[n]))^2$ that is not in $X(\ell)$. In other words, we can find $\nu,\nu'\in\Gfrak_n$ such that
		\[
		\sum_{\sigma\in\Tcal}(\sigma(\nu(y[n]))-\sigma(\nu'(y[n])))\notin
		A(L[\infty])_\mathrm{tors}.
		\]
		In particular, either $\sum_{\sigma\in\Tcal}\sigma(\nu(y[n]))$ or $\sum_{\sigma\in\Tcal}\sigma(\nu'(y[n]))$ is non-torsion in $A(L[\infty])$. In either case, $\sum_{\sigma\in\Tcal}\sigma(y[n])$ is non-torsion, which finishes the proof of the theorem.
	\end{proof}

	An immediate corollary is the following strengthening of Corollary \ref{vertical Mazur's conjecture}.
	\begin{corollary}\label{vertical Mazur's conjecture, stronger version}
		Let $\pi\colon Sh_U\to A$ be as above. Fix a prime $p$ coprime to $\Nfrak D_{L/\Q}$. Then for $k\gg1$, we have
		\[
		\sum_{\sigma\in\Tcal}\sigma(y[p^k])
		\notin
		A(L[\infty])_\mathrm{tors}.
		\]
	\end{corollary}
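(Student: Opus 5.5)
The plan is to deduce the corollary directly from Theorem \ref{S^circ}, applied with $n=p^k$. Two ingredients are needed: an infinite set $\Scal^\circ$ of primes avoiding $p$, and a choice of constant $c$ compatible with $d(p^k)$.

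\textbf{Choice of $\Scal^\circ$.} Since $x[1]$ is a CM point, it has supersingular reduction at every prime inert in $L$. By Chebotarev there are infinitely many such primes. I take $\Scal^\circ$ to be the set of primes inert in $L$ and not dividing $p\Nfrak D_{L/\Q}$, with a fixed place $v(\ell)$ over each $\ell\in\Scal^\circ$. This set is infinite, satisfies the hypothesis of Theorem \ref{S^circ}, and makes every $n=p^k$ coprime to all primes in $\Scal^\circ$ and to $\Nfrak D_{L/\Q}$.

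\textbf{Choice of $c$, inert case.} Recall that $d(p^k)=d(p)$ for all $k\ge1$. If $p$ is inert in $L$, then $d(p^k)=1$, so $1-d(p^k)=0<c$ for every $c>0$. Fix any admissible $0<c\ll1$ from Theorem \ref{S^circ} and take the resulting $n_0$. For every $k$ with $p^k>n_0$ the theorem then gives $\sum_{\sigma\in\Tcal}\sigma(y[p^k])\notin A(L[\infty])_\mathrm{tors}$.

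\textbf{Choice of $c$, split case.} If $p$ splits in $L$, then $1-d(p^k)=\frac{2}{p+1}$ is a fixed positive number. This is the main obstacle: Theorem \ref{S^circ} is stated only for $0<c\ll1$, and $\frac{2}{p+1}$ need not be small when $p$ is small (for instance $p=3$ gives $\frac12$). To handle it I would revisit the proof of Theorem \ref{S^circ} and check that the smallness of $c$ enters only through \eqref{f(n)/(S_n')^2}.

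In that proof, the ratio $f(n)/\#(S_n')^2$ is bounded by roughly $\#A(L[\infty])_\mathrm{tors}/s(\ell)$, up to equidistribution error. The bound uses the equidistribution of the Hecke orbits $T_{a(n)}(x[1]^{(\ell)})$ from Theorem \ref{T_{a_n(x_0)} equidistribute on Shimura variety} together with the uniform bound $c_g(\ell)\le24$. Since $s(\ell)\to\infty$ along $\Scal^\circ$, this ratio can be made smaller than any prescribed positive number by choosing $\ell$ large first and then $n$ large.

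The comparison step only requires $d(n)^2>f(n)/\#(S_n')^2$. Here $d(n)^2=\left(\frac{p-1}{p+1}\right)^2$ is a fixed positive constant, because $p\ge3$ when $p$ splits in $L$ with $p\nmid D_{L/\Q}$. So it suffices to choose $\ell\in\Scal^\circ$ with
\[
\frac{\#A(L[\infty])_\mathrm{tors}}{s(\ell)}<\frac12\left(\frac{p-1}{p+1}\right)^2,
\]
and then take $k$ large enough that the equidistribution error is below the remaining margin. This yields a pair $\nu,\nu'\in\Gfrak_{p^k}$ whose reductions avoid $X(\ell)$, and hence non-torsion exactly as at the end of the proof of Theorem \ref{S^circ}. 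The upshot is that Theorem \ref{S^circ} in fact holds for any $c<1$ once $\ell$ is taken large relative to $c$, and the corollary follows.
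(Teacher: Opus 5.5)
Your argument follows the paper's route: the paper's entire proof is to apply Theorem~\ref{S^circ} with $\Scal^\circ\backslash\{p\}$ in place of $\Scal^\circ$. Your additional check covers a case the paper's one-line proof passes over silently, namely split $p$ for which $1-d(p^k)=\frac{2}{p+1}$ is not small. There you show that the proof of Theorem~\ref{S^circ} works for any fixed $c<1$ once $\ell\in\Scal^\circ$ is chosen large in terms of $c$. This holds because the weighted limit of $f(n)/\#(S_n')^2$ is $O(\#A(L[\infty])_\mathrm{tors}/s(\ell))$, using $c_g(\ell)\le 24$, so the omitted constants in your displayed choice of $\ell$ are harmless.

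The only slip is the claim that split $p$ forces $p\ge 3$: $2$ splits in, e.g., $\Q(\sqrt{-7})$. Since $d(2)=\frac13>0$, nothing in your argument changes.
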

	\begin{proof}
		For $\Scal^\circ$ as above, we apply Theorem \ref{S^circ} to $\Scal^\circ\backslash\{p\}$ instead of $\Scal^\circ$.
	\end{proof}

\end{document}